\newcommand{\eqref}[1]{(\ref{#1})}
\newcommand{\zset}{{\mathbb Z}}
\newcommand{\midpoints}{\Lambda}
\def\L{\Lambda}
\def\O{\Omega}
\def\Lmn{\Lambda_{m,n}}
\def\Omn{\Omega_{m,n}}
\def\rset{\mathbb{R}}
\def\s{\sigma}
\newtheorem{theorem}{Theorem}[section]
\newtheorem{lemma}[theorem]{Lemma}
\newtheorem{proposition}[theorem]{Proposition}
\newtheorem{corollary}[theorem]{Corollary}
\newtheorem{conjecture}[theorem]{Conjecture}
\newcommand{\cC}{\mathcal C}
\newcommand{\cD}{\mathcal D}
\newcommand{\cE}{\mathcal E}
\newcommand{\cG}{\mathcal G}
\newcommand{\cS}{\mathcal S}
\newcommand{\bbE}{{\mathbb E}}
\newcommand{\bbN}{{\mathbb N}}
\newcommand{\bbP}{{\mathbb P}}
\newcommand{\bbR}{{\mathbb R}}
\newcommand{\si}{\sigma}
\newcommand{\tmix}{T_{\mathrm{mix}}}
\def\a{\alpha}
\def\e{\varepsilon}
\def\k{\kappa}
\def\s{\sigma}
\def\t{\tau}
\def\D{\Delta}
\def\G{\Gamma}
\def\L{\Lambda}
\def\O{\Omega}
\def\Y{\Upsilon}
\begin{document}
\begin{frontmatter}

\title{Random lattice triangulations: Structure and algorithms}
\runtitle{Random lattice triangulations}

\begin{aug}
\author[A]{\fnms{Pietro}~\snm{Caputo}\corref{}\thanksref{T1}\ead[label=e1]{caputo@mat.uniroma3.it}},
\author[A]{\fnms{Fabio}~\snm{Martinelli}\thanksref{T1}\ead[label=e2]{martin@mat.uniroma3.it}},
\author[B]{\fnms{Alistair}~\snm{Sinclair}\thanksref{T3}\ead[label=e3]{sinclair@cs.berkeley.edu}}
\and
\author[A]{\fnms{Alexandre}~\snm{Stauffer}\thanksref{T1}\ead[label=e4]{alexandrestauffer@gmail.com}}

\thankstext{T1}{Supported by the European Research Council through AdG
``PTRELSS'' 228032.}
\thankstext{T3}{Supported in part by NSF Grant CCF-1016896 and by the
European Research Council
through AdG ``PTRELSS'' 228032. Part of this work was done while this
author was visiting University of Roma Tre.}
\runauthor{Caputo, Martinelli, Sinclair and Stauffer}

\affiliation{University of Roma Tre, University of Roma Tre, University
of California, Berkeley and University of Roma Tre}

\address[A]{P. Caputo\\
F. Martinelli\\
A. Stauffer\\
Department of Mathematics\\
University of Roma Tre\\
Largo San Murialdo~1, 00146~Roma\\
Italy\\
\printead{e1}\\
\phantom{E-mail:\ }\printead*{e2}\\
\phantom{E-mail:\ }\printead*{e4}}
\address[B]{A. Sinclair\\
Computer Science Division\\
University of California, Berkeley\\
Berkeley, California 94720-1776\\
USA\\
\printead{e3}}
\end{aug}

\received{\smonth{11} \syear{2012}}
\revised{\smonth{4} \syear{2014}}

%
\begin{abstract}
The paper concerns \textit{lattice triangulations}, that is,
triangulations of the integer points
in a polygon in~$\rset^2$ whose vertices are also integer points.
Lattice triangulations
have been studied extensively both as geometric objects in their own
right and by virtue
of applications in algebraic geometry. Our focus is on random
triangulations in which
a triangulation~$\sigma$ has weight $\lambda^{|\sigma|}$, where
$\lambda
$ is a
positive real parameter, and $|\sigma|$ is the total length of the
edges in~$\sigma$.
Empirically, this model exhibits a ``phase transition'' at $\lambda=1$
(corresponding
to the uniform distribution): for $\lambda<1$ distant edges behave
essentially independently,
while for $\lambda>1$ very large regions of aligned edges appear. We
substantiate
this picture as follows. For $\lambda<1$ sufficiently small, we show
that correlations
between edges decay exponentially with distance (suitably defined), and
also that the
\textit{Glauber dynamics} (a local Markov chain based on flipping
edges) is rapidly mixing
(in time polynomial in the number of edges in the triangulation). This
dynamics has been
proposed by several authors as an algorithm for generating random
triangulations.
By contrast, for $\lambda>1$ we show that the mixing time is
exponential. These are apparently
the first rigorous quantitative results on the structure and dynamics
of random lattice
triangulations.
\end{abstract}\vspace*{-8pt}

%
\begin{keyword}[class=AMS]
\kwd[Primary ]{60K35}
\kwd[; secondary ]{68W20}
\kwd{05C81}
\end{keyword}

\begin{keyword}
\kwd{Triangulations}
\kwd{spatial mixing}
\kwd{Glauber dynamics}
\kwd{mixing times}
\kwd{rapid mixing}
\end{keyword}
\end{frontmatter}
%

\section{Introduction}\label{sec:intro}
\subsection{Background}
Let $\Lambda^0_{m,n}= \{0,1,\ldots,m\}\times\{0,1,\ldots,n\}$
denote the set of
integer points
in an $m\times n$ rectangle in~$\rset^2$. This paper is concerned with\vadjust{\goodbreak}
\textit{(full)
triangulations} of $\Lambda^0_{m,n}$, that is, triangulations that use
all the
points. Any such
triangulation partitions the rectangle into $2mn$ triangles, each
triangle being
\textit{unimodular} (having area~$1/2$). See Figure~\ref{fig:example} for
an example
of a triangulation with $m=5$, $n=7$.
Most of our results extend to the case of triangulations of the integer
points in an
arbitrary (not necessarily convex) lattice polygon.

\begin{figure}

\includegraphics{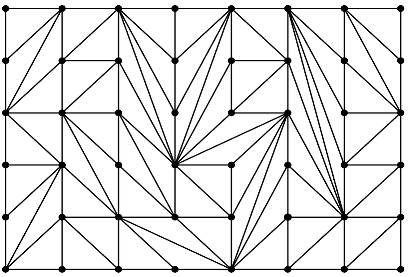}

\caption{A triangulation of a $5\times7$ region.}
\label{fig:example}
\end{figure}

%
%
%

Lattice triangulations are fundamental discrete geometric objects that
have a rich
and beautiful structure; see, for example, \cite{DeLoera} for
background. For example,
a~number of recent papers have developed ingenious combinatorial arguments
to estimate the asymptotic number of triangulations \cite
{Anclin,KZ,MVW,Orevkov},
as well as to explore their connectivity properties under natural local
moves \cite{Eppstein,KZ}.

Lattice triangulations have also received much attention in algebraic
geometry, through
connections with plane algebraic curves and Hilbert's 16th
problem~\cite
{Viro}, the theory of
discriminants~\cite{GKZ} and toric varieties~\cite{Dais}. In several of
these contexts one is
chiefly interested in properties of ``typical'' triangulations, such as
whether they are
\textit{regular} (in the sense of being representable by a ``nice'' lifting
function~\cite{Sturmfels,Ziegler}), whether they contain ``long, thin''
triangles, etc.
This leads one to investigate random triangulations drawn from a uniform
distribution~\cite{KZ,Welzl}.

A natural generalization that can yield useful insights here is to
introduce weights:
specifically, we consider the distribution in which a
triangulation~$\sigma$ has
weight~$\lambda^{|\sigma|}$, where $|\sigma|$ is the total $\ell
_1$~length of the
edges in~$\sigma$, and $\lambda>0$ is a real parameter. The case
$\lambda=1$
is the uniform distribution, while $\lambda<1$ (resp., $\lambda>1$) favors
triangulations with shorter (resp., longer) edges. Figure~\ref
{fig:simulations} shows
typical triangulations of a $50\times50$ region for three values
of~$\lambda$. This
weighted version corresponds rather closely to a model that has
recently been
studied in statistical physics~\cite{RM}.

\begin{figure}

\includegraphics{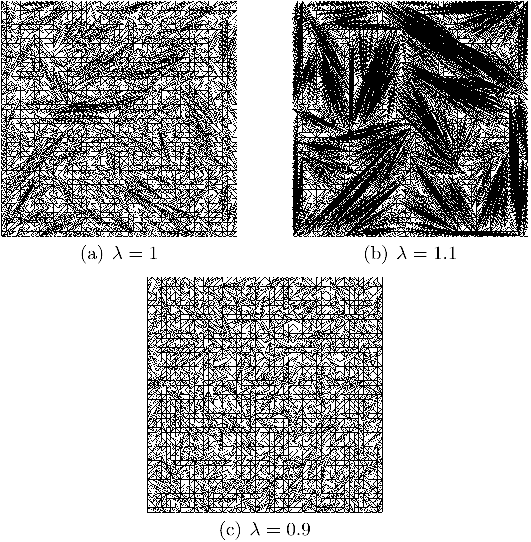}

\caption{Random triangulations of a $50\times50$ lattice.}
\label{fig:simulations}
\end{figure}

One striking feature of the pictures in Figure~\ref{fig:simulations} is
the tendency in
the case $\lambda>1$ for edges to line up in macroscopically large
regions of similar
slope, while for $\lambda<1$ these regions disappear. The uniform case
$\lambda=1$
appears to represent a ``phase transition'' between these two regimes,
in which fairly
large regions form but are unstable. One goal of this paper is to
initiate a rigorous,
quantitative study of this phenomenon.

How does one generate a random triangulation similar to those shown in
Figure~\ref{fig:simulations}? The only known feasible approach for
large values
of $m,n$ is to simulate a local Markov chain, or \textit{Glauber
dynamics}, which converges
to the desired probability distribution over triangulations.
Fortunately there is a very natural
dynamics here: pick a random edge of the triangulation, and if it is
the diagonal of a
convex quadrilateral (which is in fact always a parallelogram), flip it
to the opposite
diagonal. The induced graph on the set of triangulations, in which two
triangulations
are adjacent if and only if they differ by one edge flip, is known as
the ``flip graph.'' The flip graph
is well known to be connected~\cite{Lawson} (indeed, quite a bit more
is known
about its geometry~\cite{Eppstein,KZ}),
so the Glauber dynamics converges to the uniform distribution over
triangulations.
Moreover, a standard ``heat-bath'' modification of the flip
probabilities achieves the
weighted distribution $\lambda^{|\sigma|}$ for any desired~$\lambda$.

This leads to a fundamental question of both algorithmic and structural
interest, posed
by Welzl and others~\cite{Welzl}: what is the \textit{mixing time} of
the Glauber dynamics,
that is, the number of random flips until the dynamics is close to the
stationary distribution?
The pictures in Figure~\ref{fig:simulations}, together with the actual
Glauber dynamics
simulations used to generate them, suggest the following conjecture.
When $\lambda<1$
there are no long-range correlations between edges, and we would expect
the mixing
time to be small; when $\lambda>1$, the large ``frozen'' regions of
aligned edges take
a very long time to break up, and consequently the mixing time should
be large.
We may summarize this in:

%
\begin{conjecture}\label{conj:mt}
\textup{(a)} For any fixed $\lambda<1$, the mixing time of the Glauber dynamics
is $\operatorname{poly}(m+n)$
[specifically, $O(mn(m+n))$].
\textup{(b)} For any fixed $\lambda>1$, the mixing time is exponential in $(m+n)$
[specifically, $\exp(\Omega(mn(m+n)))$].
\end{conjecture}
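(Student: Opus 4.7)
\medskip
\noindent
\textbf{Proof plan.}
For part~(a), the plan is a two-stage argument: first establish \emph{spatial mixing} of the Gibbs measure, then convert this into rapid mixing of the Glauber dynamics.  Spatial mixing should follow from a cluster/disagreement-percolation expansion around the ``minimum-length'' reference triangulation (in which every unit square is cut by one of its two diagonals): since a deviation from this reference of total ``extra length'' $\ell$ costs a weight factor $\lambda^\ell$, long excursions are exponentially suppressed when $\lambda<1$ is sufficiently small, so boundary influences on a fixed edge $e$ decay exponentially in the distance from $e$ to the boundary.  This yields a strong spatial mixing statement uniform in $m,n$.

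To pass from spatial mixing to temporal mixing, I would use block dynamics in the Martinelli--Olivieri style.  Partition the rectangle into overlapping strips of constant width; bound the mixing time of each strip (with arbitrary boundary condition) by a direct coupling argument, which is feasible since a strip has constant width and hence a manageable local state space; then recursively combine the strip estimates using spatial mixing to cancel the dependence on neighboring strips.  The resulting Poincar\'e inequality has constant $\Omega(1/(m+n))$ (the single factor of $m+n$ reflecting the number of recursion levels), which, combined with the $\Theta(mn)$ state-space-size factor in the Glauber transition rates, yields the target bound $O(mn(m+n))$.

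For part~(b), the plan is a conductance argument based on a Peierls-type bottleneck.  Introduce an order parameter $\phi(\sigma)$ measuring the signed imbalance between SW--NE and NW--SE diagonal edges in $\sigma$, and split the state space into $\mc^+ = \{\phi>0\}$ and $\mc^- = \{\phi<0\}$; both phases carry stationary weight at least a constant by symmetry.  On the boundary $\partial \mc^+$ every configuration must contain a ``domain wall'' --- a connected sequence of edges separating a $+$-region from a $-$-region --- of length at least the minimum cross-section $\Omega(\min(m,n))$.  I would then define a local ``straightening'' map that, applied along the domain wall, replaces it by a chain of short diagonals, producing a configuration of stationary weight larger by a factor $\lambda^{c(m+n)}$ for some $c>0$.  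Provided this map is at most $\exp(o(m+n))$-to-one (controlled by enumerating the possible wall shapes), we conclude $\pi(\partial \mc^+)/\pi(\mc^+) \le \exp(-\Omega(m+n))$, and the Cheeger inequality then gives $\tmix = \exp(\Omega(m+n))$, with the extra $mn$ factor in the exponent arising from summing the entropic contributions over all candidate positions of the wall.

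The main obstacle I anticipate is in part~(b): triangulations lack the clean lattice-gas structure of Ising-type models, so the notion of a ``domain wall'' must be defined carefully --- an edge flip does not simply rearrange a single spin, and a long diagonal may cut across many unit squares simultaneously.  Making the straightening map concrete, locally reversible, and provably of bounded multiplicity will be the delicate combinatorial core of the argument.  In part~(a), a secondary challenge is obtaining the sharp polynomial bound $O(mn(m+n))$ rather than a cruder polynomial: naive path-coupling typically loses a factor of $\log(mn)$, so the claimed exponent requires either a modified log-Sobolev inequality or a carefully tuned canonical-paths construction.
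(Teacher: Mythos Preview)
This statement is a \emph{conjecture}: the paper does not prove it, and explicitly leaves both parts open in full generality (see Section~\ref{sec:future}). What the paper proves are the partial results Theorems~\ref{thm:one} and~\ref{thm:two}: part~(a) for all sufficiently small $\lambda<\lambda_0$, and part~(b) with the weaker exponent $\exp(\Omega(m+n))$ rather than $\exp(\Omega(mn(m+n)))$. So you should not expect your plan to succeed as stated, and indeed it has gaps in both halves.

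For part~(a), your cluster-expansion route to spatial mixing is essentially what the paper does in Section~\ref{sec:equilibrium}, and for the same reason it only works for small~$\lambda$: the Peierls sum over excursions is controlled only when $\lambda$ beats the entropy constant. So this step already falls short of the full conjecture. More seriously, your spatial-mixing-to-temporal-mixing step via Martinelli--Olivieri block dynamics runs into the non-locality obstacle the paper stresses in the introduction: an edge with midpoint in one strip can have length $\Theta(m+n)$ and protrude far into neighbouring strips, so ``resampling a strip with fixed boundary'' is not a well-defined local move, and the usual recursive comparison breaks down. The paper sidesteps this entirely: for small~$\lambda$ it uses a direct path-coupling argument with an exponential metric $\Delta(\sigma,\tau)=|\alpha^{|\sigma_x|}-\alpha^{|\tau_x|}|$ (Lemma~\ref{lem:pathcoupling}), which contracts in one step and gives the sharp $O(mn(m+n))$ without any block decomposition or log factor.

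For part~(b), your order parameter $\phi$ counting SW--NE versus NW--SE diagonals is not well-defined on general triangulations, since most edges are not unit diagonals; and your ``straightening map'' along a domain wall is exactly the delicate combinatorial point you flag but do not resolve. The paper's actual bottleneck (proof of Theorem~\ref{thm:alla1}) is quite different and more concrete: it takes $A$ to be the set of ``herringbone'' triangulations in which horizontal layers alternate orientation, so that each layer is a 1D triangulation and escaping $A$ forces one layer to contain a vertical edge, costing $\Omega(n)$ in length; the resulting conductance bound is $\exp(-\Omega(m+n))$. Finally, your claim that the extra $mn$ factor in the exponent comes from ``summing the entropic contributions over all candidate positions of the wall'' is wrong as stated: summing over positions contributes a polynomial prefactor, not an exponential one, so this cannot upgrade $\exp(\Omega(m+n))$ to $\exp(\Omega(mn(m+n)))$. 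The paper leaves that strengthening open.
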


The ``critical'' case $\lambda=1$ appears to be rather delicate: while
large regions tend
to form, they tend not to be stable over long time scales, and the
behavior of the mixing
time is less clear.

A second goal of this paper is to make the first rigorous progress in
analyzing the mixing
time of these dynamics, and in particular toward a proof of
Conjecture~\ref{conj:mt}.

Before describing our results, we relate random triangulations to
classical spin systems
in statistical physics and explain why they present a challenge to
existing analysis
techniques. It is well known that, in any triangulation of~$\Lambda
^0_{m,n}$, the
\textit{midpoints}
of all the edges are fixed, and are precisely the points of the
half-integer lattice
(with the original lattice points removed)~\cite{Anclin}; see the
example in Figure~\ref{fig:example}.
Thus we can view any triangulation as an assignment of ``spin values''
(edges) to each
of these midpoints, subject of course to consistency constraints.
One might hope, then, to capitalize on the vast literature on both the
structure and
dynamics of spin systems. However, there is a crucial difference here.
In a classical
spin system on a graph, all interactions are local; that is, the
distribution of the value of a given
spin is determined by the spins of its neighbors in the graph. For
triangulations, on the
other hand, each edge belongs to two triangles and thus has four
neighboring edges---but
the midpoints of these edges depend on the triangulation and may lie
very far from the
midpoint of the edge itself. In other words, the geometry of a
triangulation seems to have little
to do with the Cartesian geometry of the lattice. It is this
nonlocality that makes the
application of established techniques very challenging here.
\subsection{Contributions}
Our first results confirm the empirical observation that, in the regime
$\lambda<1$,
correlations between edges decay rapidly with distance---a~property
often referred
to in the spin systems literature as ``strong spatial mixing.''
Specifically, for any sufficiently
small $\lambda<1$, we show that if the configuration of the edge at
midpoint~$x$
is fixed to~$\sigma_x$, then the effect on the distribution of the
edges at other midpoints
$z\in A$, for an arbitrary subset~$A$,
decays exponentially with a natural distance $d(A,\sigma_x)$
between~$A$ and~$\sigma_x$.
[See Section~\ref{sec:equilibrium} for the definition of $d(\cdot,\cdot)$.]
Note that $d(A,\sigma_x)$ depends on the edge~$\sigma_x$ and is not
just the geometric
distance between~$A$ and the point~$x$. Indeed, in contrast to the
notion of strong spatial
mixing for spin systems, or equivalently, Dobrushin's complete
analyticity (see, e.g., \cite{MartOli}),
in this setting, because of the geometric constraints, we cannot expect
a bound that is independent
of the conditioning edge~$\sigma_x$.

%
\begin{theorem}\label{thm:zero}
There exists $\lambda_0>0$ such that for all $\lambda<\lambda_0$ the
following holds.
Let $\mu$ denote the $\lambda$-weighted distribution on triangulations
of $\Lambda^0_{m,n}$,
and $\mu^{\sigma_x}$ the distribution \textit{conditional on the edge
at~$x$ having
configuration~$\sigma_x$}. Then the variation distance between $\mu$
and $\mu^{\sigma_x}$
at any set of midpoints~$A$ satisfies $\Vert\mu-\mu^{\sigma_x}\Vert_A
\leq|A|\exp(-c d(A,\sigma_x))$,
where the constant $c>0$ depends only on~$\lambda$.
\end{theorem}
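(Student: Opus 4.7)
The plan is to prove exponential decay of correlations via a disagreement‑percolation argument adapted to the non‑local geometry of triangulations. The overall scheme is classical: couple $\mu$ and $\mu^{\sigma_x}$ so that the set of midpoints where the two samples disagree forms a random cluster $C\ni x$, and then show $\Prob(z\in C)\leq \exp(-c\,d(z,\sigma_x))$ for every midpoint $z$. Summing over $z\in A$ and using the total variation characterisation immediately yields the factor $|A|$ in the claimed inequality, so the core task is the single‑point bound.

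First, I would reduce the comparison of $\mu$ and $\mu^{\sigma_x}$ to a static combinatorial question via a common‑update heat‑bath coupling. Any two triangulations $\sigma,\sigma'$ of $\Lomn$ are connected in the flip graph, and the weight ratio $\mu(\sigma)/\mu(\sigma')=\lambda^{|\sigma|-|\sigma'|}$ factorises multiplicatively over a sequence of flips connecting them. This identifies a ``disagreement skeleton'': a subset of the half‑integer lattice that contains every midpoint where the current pair of triangulations differs, and which is closed under the natural notion of adjacency (two midpoints are adjacent if the corresponding edges share a triangle in at least one of the two triangulations). A point $z$ can belong to this skeleton only if there is a connected path of such adjacencies from $x$ to $z$. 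This is the analogue of the Van den Berg–Maes disagreement path, but with adjacencies that depend on the configuration itself.

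The heart of the proof is then a combined entropy/energy estimate for these paths, in the spirit of a Peierls argument. For the entropy side, I would bound the number of geometrically admissible paths of combinatorial length $\ell$ starting at $x$ by $K^\ell$ for some absolute constant $K$: although neighbours depend on the triangulation, at each step the next midpoint must be the midpoint of an edge of one of at most two triangles containing the current edge, and there are only boundedly many candidate triangles compatible with a given edge. For the energy side, I would show that any triangulation containing such a path has weight penalised by at least $(C\lambda^{\alpha})^\ell$ relative to a ``straightened'' reference triangulation obtained by replacing the path by aligned short edges; this exploits the fact that the $\ell_1$ length of edges appearing in a winding path grows at least linearly in $\ell$, with a geometric constant $\alpha>0$ that may be read off from the minimal perimeter increment per flip. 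Choosing $\lambda_0$ so that $K\cdot C\,\lambda_0^{\alpha}<1$, the contribution of paths of length $\ell$ decays geometrically, giving $\Prob(z\in C)\leq\exp(-c\ell_{\min})$, where $\ell_{\min}$ is the shortest admissible path length from $x$ to $z$. I would then verify that this combinatorial distance is controlled from below by the paper's geometric distance $d(\cdot,\sigma_x)$ defined in Section~\ref{sec:equilibrium}, since any admissible path from $x$ to $z$ must in particular cross any strip separating $\sigma_x$ from $z$.

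The main obstacle, as emphasised in the introduction, is the non‑local character of edge adjacency: the four ``neighbours'' of a midpoint are not fixed in space but are dictated by the triangulation. This makes the energy bound subtle, because the same edge can play the role of a neighbour for pairs of midpoints at arbitrarily large Cartesian distance, and so the reference ``straightening'' step must be designed carefully to ensure that replacing the path strictly reduces $|\sigma|$ by an amount proportional to $\ell$, rather than only proportional to the $\ell_1$ diameter of the path. I expect this to be handled by a surgery argument that processes the path one flip at a time from the endpoint $z$ back towards $x$, using the parallelogram structure of each flip to guarantee a uniform length gain; once this geometric lemma is in hand, the entropy/energy balance and the passage from the single‑point bound to the $|A|$‑bound are routine.
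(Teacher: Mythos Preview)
Your proposal follows a van den Berg--Maes disagreement-percolation template, but the paper's proof takes a different and, I think, necessary route. Two of your key steps do not survive the non-locality you yourself flag.

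\textbf{The entropy bound is false as stated.} You claim ``there are only boundedly many candidate triangles compatible with a given edge''. This is not true: a fixed edge $\sigma_y$ can be the side of $\Theta(m+n)$ distinct unimodular triangles (one for each lattice point on the adjacent parallel line inside the box; cf.\ Figure~\ref{fig:fund}(a)). So the configuration-independent branching factor of your disagreement graph is not an absolute constant, and a naive count of paths of length~$\ell$ gives $(m+n)^{O(\ell)}$ rather than $K^\ell$. You could try to couple the path count to the configuration (since in any \emph{fixed} triangulation each edge has only four neighbours), but then you are summing over pairs (path, pair of triangulations), and you still need a bound on how many triangulations realise a given path---this is exactly where the paper's argument does real work, via Anclin's $2^{3|S|/2}$ bound on triangulations of a region~$S$.

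\textbf{The ``straightening'' surgery is ill-defined.} Replacing a single path of edges by short aligned edges is not a local move on triangulations: the other edges of the two triangles containing each path-edge may themselves be long and non-ground-state, so flipping the path edge down forces a cascade of further changes. Your proposed ``one flip at a time from~$z$ back towards~$x$'' does not control this cascade; there is no guarantee of a uniform per-step length gain along the disagreement path alone.

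The paper resolves both issues by abandoning disagreement percolation in the configuration-dependent adjacency graph and instead working in the \emph{fixed} graph of ground-state triangles of $\bar\sigma(\sigma_x)$. It couples $\mu$ and $\mu^{\sigma_x}$ \emph{independently} and observes (domain Markov property) that the two samples can be re-coupled on~$A$ whenever some $*$-connected chain of ground-state triangles separates $\Upsilon(\sigma_x)$ from~$A$ in \emph{both} samples. Failure of this event forces a connected chain~$\cD$ of ground-state triangles from $\Upsilon(\sigma_x)$ to~$A$, each of which is a B-triangle (not all edges in ground state) in at least one sample. Now the entropy count is over connected subsets of a \emph{fixed} planar graph of bounded degree ($\le 9^\ell$), and the energy bound comes from replacing the \emph{entire} maximal B-region~$S$ by the ground state inside~$S$---a globally valid surgery because $\partial S$ consists of ground-state edges---together with Anclin's bound. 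This is Lemma~\ref{cor2} feeding into Theorem~\ref{covdecga}. The distance $d(A,\sigma_x)$ is by definition the length of the shortest such~$\cD$, so no further comparison of metrics is needed.
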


We remark that this result holds in the presence of arbitrary fixed
edges, or \textit{constraints},
and therefore for triangulations of arbitrary lattice polygons.
Handling constraints introduces
some technical complications into our proofs, and necessitates in
particular the understanding
of minimum length, or ``ground state'' triangulations. (In the absence
of constraints, ground
state triangulations are trivial: all edges are either horizontal,
vertical or unit diagonal.)
Fortunately, as we show, ground state triangulations with constraints
can be constructed
greedily, by placing each edge independently in its minimum length
configuration consistent
with the constraints. We also prove another fundamental property in
this regime, namely
that the probability of an edge exceeding its ground state length
by~$k$ decays exponentially
with~$k$.

Theorem~\ref{thm:zero} is analogous to spatial mixing results in
classical spin systems,
with the additional twist of the distance function~$d$ whose definition
is tailored to the geometry
of triangulations. Our argument is reminiscent of classical
``Peierls-type'' arguments for the
Ising model, which explains why we require $\lambda$ to be sufficiently
small. We conjecture
that the theorem holds for all $\lambda<1$, but handling values of
$\lambda$ close to~1 will
require more sophisticated methods.

We then turn to the Glauber dynamics, again for the regime $\lambda<1$.
Here we can show
the following.

%
\begin{theorem}\label{thm:one}
There exists $\lambda_1>0$ such that for all $\lambda<\lambda_1$ the
mixing time
of the Glauber dynamics on triangulations of $\Lambda^0_{m,n}$ with
parameter~$\lambda$
(and in the presence of arbitrary constraints) is $O(mn(m+n))$.
\end{theorem}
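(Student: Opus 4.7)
\emph{Proof plan.}
The strategy is to deduce Theorem~\ref{thm:one} from the strong spatial mixing of Theorem~\ref{thm:zero} via path coupling on the flip graph, adapted to the non-local geometry of triangulations.

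\emph{Setup.} Let $\rho(\sigma,\sigma')$ denote the flip distance: the minimum number of edge flips transforming $\sigma$ into $\sigma'$. Since every triangulation of $\Lomn$ has total $\ell_1$ edge length $O(mn(m+n))$ and each flip changes the total length by $O(1)$, the diameter of the flip graph satisfies $D := \mathrm{diam}=O(mn(m+n))$. By the Bubley--Dyer path-coupling lemma, it suffices to exhibit a coupling of one heat-bath Glauber step from pairs $\sigma,\sigma'$ with $\rho(\sigma,\sigma')=1$ (differing by a flip at some midpoint $x_0$) such that the expected flip distance after the step is at most $1-\Omega(1/(mn))$.

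\emph{Coupling of one step.} I would couple by selecting the same midpoint $x$ uniformly in both chains and then performing a maximal coupling of the two heat-bath moves at $x$. Because $\sigma$ and $\sigma'$ agree outside $x_0$, when $x=x_0$ the conditional distribution at $x_0$ is identical on both sides, so the coupled update produces the same edge in both chains and eliminates the discrepancy with probability $1$. For $x\neq x_0$, Theorem~\ref{thm:zero} applied with $A=\{x\}$ gives
\[
\bigl\lVert \mu^{\sigma}-\mu^{\sigma'}\bigr\rVert_{\{x\}} \;\leq\; 2\exp\bigl(-c\,d(x,x_0)\bigr),
\]
so the maximal coupling fails with at most this probability; and when it fails, the flip distance increases by at most a $\lambda$-dependent constant $C$, because a heat-bath move at $x$ creates a single new local discrepancy whose size is controlled by the exponential tail on excess edge length that accompanies Theorem~\ref{thm:zero}.

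\emph{Contraction and conclusion.} Averaging over the uniform choice of $x$ gives an expected one-step change in $\rho$ bounded by
\[
\frac{1}{N}\Bigl(-1+2C\sum_{x\neq x_0}\exp\bigl(-c\,d(x,x_0)\bigr)\Bigr),\qquad N=O(mn).
\]
The sum converges once the exponential rate $c$ dominates the polynomial growth rate of balls in the distance $d$, which happens for $\lambda$ below a suitable threshold $\lambda_1$; in this regime the parenthesis is strictly negative, yielding per-step contraction $1-\Omega(1/(mn))$. Path coupling then produces mixing time $O(mn\log D) = O(mn(m+n))$, absorbing the logarithmic factor.

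\emph{Main obstacle.} The technical crux is analyzing the coupling for midpoints $x$ close to $x_0$ in $d$-distance. There the parallelogram containing $x$, and hence the feasibility and outcome of the flip at $x$, can differ between $\sigma$ and $\sigma'$, so the two heat-bath moves may be supported on genuinely different local neighborhoods. Turning this qualitative statement into quantitative control requires the distance $d(\cdot,\cdot)$ of Theorem~\ref{thm:zero}---which measures proximity through admissible configurations rather than Cartesian distance---together with the ground-state edge-length tail bounds developed alongside that theorem. This is precisely where the ``$\lambda$ sufficiently small'' hypothesis is essentially used.
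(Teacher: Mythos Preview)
Your proposal has a genuine gap, and it stems from a confusion between the stationary distribution and the one-step transition kernel.

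First, Theorem~\ref{thm:zero} bounds the variation distance between \emph{Gibbs marginals} $\mu$ and $\mu^{\sigma_x}$ at a set~$A$; it says nothing about the heat-bath transition probabilities from a pair of full configurations $\sigma,\sigma'$. In fact the heat-bath update at a midpoint~$x$ depends only on the two triangles containing the edge at~$x$, i.e., on the four edges of the minimal parallelogram of~$\sigma_x$. Hence if $\sigma$ and $\sigma'$ differ only at~$x_0$, the coupled updates agree \emph{deterministically} at every $x$ that is not one of the four sides of the minimal parallelogram of~$\sigma_{x_0}$. There is no need (and no way) to invoke spatial mixing here; the analysis is purely local. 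The paper's proof does not use Theorem~\ref{thm:zero} at all.

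Second, and more seriously, the flip-distance metric~$\rho$ does \emph{not} contract, even for arbitrarily small~$\lambda$. Write $f_1\geq f_2$ for the side lengths of the minimal parallelogram, so $|\sigma_{x_0}|=f_1+f_2$ and $|\tau_{x_0}|=f_1-f_2$. Consider the two long sides $y_1,y_2$ (length~$f_1$). In~$\tau$ each $\tau_{y_i}$ may be flippable to a \emph{shorter} edge of length $|f_1-2f_2|$, which happens with heat-bath probability $1/(1+\lambda^{f_1-|f_1-2f_2|})\to 1$ as $\lambda\to 0$; in~$\sigma$ the same edge is only flippable to a longer edge. Thus each of $y_1,y_2$ creates a new discrepancy with probability close to~$1$, so the expected number of new discrepancies is close to~$2$ while only one is removed at~$x_0$. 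Path coupling in~$\rho$ therefore fails precisely for the small-$\lambda$ regime you are targeting.

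The paper's remedy is an exponential metric: for adjacent $\sigma,\tau$ set $\Delta(\sigma,\tau)=\bigl|\alpha^{|\sigma_{x_0}|}-\alpha^{|\tau_{x_0}|}\bigr|$ (with a special value for opposite unit diagonals), extended to shortest-path distance. Then the original discrepancy has $\Delta$-value $\alpha^{f_1+f_2}-\alpha^{f_1-f_2}$, while each new discrepancy at $y_i$ or $z_i$ contributes at most $\alpha^{f_1}$ or $\alpha^{f_2}$. For $\alpha$ large enough (and $\lambda\alpha\leq 1$) one gets $2\alpha^{f_1}+2\alpha^{f_2}\leq\tfrac12\Delta(\sigma,\tau)$, yielding the required contraction $1-\Omega(1/|\Lambda|)$. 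The mixing-time bound then follows since $\log\bigl(\max\Delta\bigr)=O(m+n)$.
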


Theorem~\ref{thm:one} goes some way toward proving part~(a) of
Conjecture~\ref{conj:mt},
though it does require $\lambda$ to be sufficiently small (rather than
merely less than~1).
Our proof here uses a path coupling argument with a suitably chosen
exponential metric
on triangulations, and exploits properties of the geometry of the flip
graph which
we also establish. We also prove that our bound on the mixing time is tight.
In the special case of triangulations of a 1-dimensional region
$\Lambda^0_{1,n}
$, we show
that the mixing time is $O(n^2)$ \textit{for all} $\lambda<1$. In
this case
triangulations turn out to be isomorphic to ``lattice paths'' of
length~$2n$ starting and ending
at~0 with $\pm1$ increments at each step.

Our final main result concerns the Glauber dynamics in the complementary
regime $\lambda>1$. Here we are
able to prove a slightly weaker version of part~(b) of Conjecture~\ref
{conj:mt}.

\begin{theorem}\label{thm:two}
For any $\lambda>1$, the mixing time of the Glauber dynamics
on triangulations of $\Lambda^0_{m,n}$ with parameter~$\lambda$ is
$\exp
(\Omega(m+n))$.
\end{theorem}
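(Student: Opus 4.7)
The plan is to establish an exponential lower bound via a conductance (bottleneck) argument. By the Cheeger-type inequality, it suffices to exhibit a set $S$ of triangulations with $\mu(S)\leq 1/2$ and edge-boundary flow $Q(S,S^c) = \sum_{\sigma\in S,\,\sigma'\notin S}\mu(\sigma)P(\sigma,\sigma')\leq \mu(S)\cdot\exp(-\Omega(m+n))$. The underlying picture, visible in Figure~\ref{fig:simulations}(b), is that for $\lambda>1$ typical triangulations develop macroscopic regions of aligned edges, and the dynamics gets trapped in a single such ``phase'' before it can access its symmetric counterpart. Since the Glauber dynamics flips one edge at a time and a triangulation has $O(mn)$ edges, we have $P(\sigma,\cdot)=O(1/(mn))$, which enters the bound only polynomially and is dominated by the exponential weight gap that is the target of the argument.

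\textbf{The cut and the reduction to concentration.} I exploit the reflection symmetry $R:(x,y)\mapsto(m-x,y)$ of the rectangle, which is an involution preserving both the lattice and the $\ell_1$-length functional $|\sigma|$, and hence a measure-preserving automorphism of the ensemble. Define the signed functional $\phi(\sigma):=\sum_{e\in\sigma}s(e)$, where $s(e)\in\{-1,0,+1\}$ records the sign of the slope of~$e$. Because $R$ sends positive-slope edges to negative-slope ones, $\phi\circ R=-\phi$ and therefore $\mu\{\phi>0\}=\mu\{\phi<0\}$. Setting $S:=\{\phi>0\}$ gives $\mu(S)\geq(1-\mu\{\phi=0\})/2$. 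A single edge-flip changes $\phi$ by $O(1)$, so the edge boundary of $S$ in the Glauber graph lies in the thin strip $\{0<\phi\leq C\}$ for a universal constant $C$, and the whole conductance bound reduces to the concentration estimate
\begin{equation*}
\mu\{|\phi(\sigma)|\leq C\}\leq \exp(-\Omega(m+n)).
\end{equation*}

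\textbf{Peierls-type surgery.} For this concentration estimate I would construct a map $T$ from the set $\{|\phi|\leq C\}$ into the full state space with two properties: (i)~$|T(\sigma)|\geq |\sigma|+c(m+n)$ for a universal $c>0$; and (ii)~$T$ has multiplicity at most $e^{o(m+n)}$. Granted (i) and (ii), summing weights gives $\mu\{|\phi|\leq C\}\leq e^{o(m+n)}\lambda^{-c(m+n)}\leq \exp(-\Omega(m+n))$ since $\lambda>1$ is fixed. The construction selects a horizontal and a vertical crossing band in $\sigma$ and rectifies each by replacing its interior triangulation with an $\ell_1$-maximal one (a suitable pencil of long parallel diagonals), patching a thin collar to match the untouched exterior. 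Because $|\phi(\sigma)|\leq C$ forces both slope signs to coexist inside each band, the restriction of $\sigma$ to the two bands falls short of the $\ell_1$-maximal triangulation of the same regions by $\Omega(m+n)$, yielding (i) by elementary $\ell_1$ bookkeeping.

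\textbf{Main obstacle.} The hardest step is controlling the multiplicity in (ii). Unlike in the Ising model, where a contour flip is reversed locally, triangulations satisfy a global combinatorial constraint---the integer points must be tiled by unimodular triangles---so surgery inside a band perturbs geometry on its boundary and one cannot in general recover $\sigma$ from $T(\sigma)$ by purely local information. To handle this I would decorate each image $T(\sigma)$ with combinatorial data (the location of the bands, the shape of each band--collar interface, and the short list of edges destroyed during the surgery) of total entropy at most $\exp(o(m+n))$. Bounding the entropy of admissible interfaces is the most delicate geometric task; I expect to control it by a ground-state analysis inside each collar in the spirit of the constructions introduced in the proof of Theorem~\ref{thm:zero}, applied now to favour long edges rather than short ones.
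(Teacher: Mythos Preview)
Your conductance framework is sound, but the Peierls surgery you propose has a genuine gap at step~(i), and step~(ii) would likely fail for $\lambda$ close to~$1$ even if~(i) could be repaired.

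The flaw in~(i) is that the condition $|\phi(\sigma)|\le C$ is \emph{global} and does not force both slope signs to coexist inside any particular band. For instance, take $m$ even and let $\sigma$ consist of the maximal positive-slope pencil in each of the top $m/2$ horizontal unit strips and the maximal negative-slope pencil in each of the bottom $m/2$ strips. Then $\phi(\sigma)=0$ exactly, yet every horizontal $1\times n$ band is already $\ell_1$-maximal, so your surgery gains nothing there. A vertical band fares no better in general: one can arrange configurations with $\phi=0$ in which any fixed vertical strip is close to maximal. Since the band must be chosen \emph{before} you know $\sigma$ (otherwise the band location contributes to the entropy in~(ii)), you cannot guarantee a gain of order $m+n$.

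Even granting~(i), the multiplicity bound~(ii) is problematic for small $\lambda>1$. Reconstructing $\sigma$ from $T(\sigma)$ requires recording the triangulation that was erased inside the band; for a $1\times n$ strip this is a lattice path of length~$2n$, carrying entropy $\Theta(n)$ rather than $o(m+n)$. Your Peierls inequality then reads $\mu\{|\phi|\le C\}\le e^{C'(m+n)}\lambda^{-c(m+n)}$, which is useful only when $\lambda>e^{C'/c}$, not for all $\lambda>1$.

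The paper takes a quite different and more explicit route. It fixes the bottleneck set $A$ to be the ``herringbone'' configurations: triangulations in which every horizontal unit layer is a one-dimensional triangulation whose diagonals all have the same sign, alternating with the layer index. All integer-height horizontal edges are then frozen, so $Z(A)$ factorises over layers and is bounded below by $\lambda^{mn^2+O(mn)}$ using the single maximal path $\hat\pi$ with $L(\hat\pi)=n^2$ in each layer. Any $\sigma\in\partial A$ must contain a vertical unit edge in some layer, forcing that layer's horizontal length down to at most $n^2-n$; the ratio $Z(\partial A)/Z(A)$ is then controlled by $\sum_{k\ge n} p(k)\lambda^{-k}$, where $p(k)$ is the integer-partition function, and the Hardy--Ramanujan bound $p(k)=e^{O(\sqrt{k})}$ beats $\lambda^{-k}$ for every fixed $\lambda>1$. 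The key advantage is that the ``entropy'' term $p(k)$ is subexponential in~$k$, which is exactly what your scheme lacks.
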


While this result confirms the conjectured exponential slowdown for all
\mbox{$\lambda>1$}, we believe that the mixing time should be exponential in the
maximum total edge length~$mn(m+n)$ rather than in the maximum length of
a single edge~$(m+n)$. Theorem~\ref{thm:two}
is proved by exhibiting an explicit bottleneck in the dynamics,
specifically an
initial configuration from which it takes a very long time to change the
slope of some long edge from positive to negative.
We also prove a stronger lower bound $\exp(\Omega(n^2/m))$,
which holds whenever $m\ll\sqrt{n}$.

\subsection{Related work}\label{subsec:related}
Triangulations of general point sets are a large topic with numerous
applications
in mathematics and computer science, including combinatorics, optimization,
algebraic geometry, computational geometry and scientific computing.
For excellent
background the reader is referred to the survey by Lee~\cite{Lee} and
the recent book
of DeLoera, Rambau and Santos~\cite{DeLoera}, which also discusses lattice
triangulations in some depth. As mentioned earlier, lattice
triangulations have
been studied both in their own right as geometric objects, and in
several contexts
in algebraic geometry \cite{Dais,GKZ,Sturmfels,Viro,Ziegler}.

Much of the work on lattice triangulations has focused on counting them.
A sequence of beautiful combinatorial arguments \cite{Orevkov,Anclin,KZ,MVW}
has shown that the number of triangulations of $\Lambda^0_{m,n}$ is at most
$O(6.86^{mn})$~\cite{MVW} and at least $\Omega(4.15^{mn})$~\cite{KZ}.
Although it is not the tightest upper bound known, we briefly mention
the elegant
result of Anclin~\cite{Anclin}, who shows that if the edges of a triangulation
of~$\Lambda^0_{m,n}$ are added one by one, starting at the top left
and proceeding
left-to-right and top-to-bottom (by midpoint), then the maximum number of
choices for each edge is two. Since there are fewer than $3mn$ interior
edges, this
immediately yields an upper bound of $8^{mn}$ on the number of triangulations.
(In contrast, the best known upper bound for the number of
triangulations of
a general set of $n$ points in the plane is $43^n$~\cite{ShW}.)

The flip graph on triangulations of $\Lambda^0_{m,n}$ has also been
studied in
some depth,
though we should stress that our work is apparently the first to handle
constraints
(and thus general lattice polygons). For example, the flip graph is
known to have
diameter $O(mn(m+n))$ and to be isometrically
embeddable into the hypercube~\cite{Eppstein}. Random walk on this
graph has
been proposed by several authors as an algorithm for generating random
triangulations,
and has been used heuristically in the formulation of conjectures regarding
typical triangulations~\cite{KZ,Welzl}. However, nothing is known rigorously
about its mixing time. (We note in passing that the analogous flip
dynamics for
triangulations of (the vertex set of) a convex polygon has been extensively
analyzed~\cite{McST,MRS}; however, that problem is much easier because
the set of triangulations has a Catalan structure.)

Random lattice triangulations with weights have appeared in slightly
different form
as a model in statistical physics~\cite{RM}; there, triangulations are weighted
according to the sum of squares of their vertex degrees. This is
actually very
close to our model, as high degrees are associated with long edges. The
structural results reported in~\cite{RM} are based on simulations and
are nonrigorous.
Rigorous results have been obtained for a loosely related
``topological glass'' model by Eckmann and Younan~\cite{Eckmann}.

The literature on structural properties and Glauber dynamics of lattice
spin systems is too vast to summarize here. We refer the reader
to the standard references~\cite{Simon} for structural properties such as
spatial mixing, and~\cite{Fabio} for mixing times of Glauber dynamics.
As explained earlier, while triangulations may be viewed as a spin system,
their geometry is very different from that of a traditional spin system
on the lattice;
our paper can be seen as a first step toward obtaining structural and mixing
time results for triangulations analogous to those for classical spin systems.

We mention finally that our mixing time result for the special case of
\mbox{1-dimensional}
regions with $\lambda<1$ is related to work of Greenberg, Pascoe and
Randall~\cite{GPR} on lattice paths. Those authors use a similar path coupling
argument, but for a different probability distribution on lattice
paths: in their
model paths are biased according to the area under the path, while in ours
the bias depends on the excursions of the path from a fixed line.
\section{The model}
\subsection{Lattice triangulations}\label{subsec:triangs}
Let $\Lambda^0_{m,n}$ denote the set of points in the $m\times n$
region of the
integer lattice $\zset^2$, that is, $\Lambda^0_{m,n}:=\{0,1,\ldots,m\}\times\{
0,1,\ldots,n\}$.
A~\textit{(full) triangulation} of~$\Lambda^0_{m,n}$ is any maximal set of
noncrossing edges
(straight line segments), each of which connects two points of~$\Lambda
^0_{m,n}$ and
passes through no other point. We write $\Omn$ for the set of triangulations
of~$\Lambda^0_{m,n}$. (We will drop
the subscripts on~$\Lambda^0$ and $\O$ when their values are not important.
Also, we will occasionally abuse notation by using $\Lambda^0_{m,n}$
to refer to
the geometric region of~$\rset^2$ that is the convex hull of the integer
points.\setcounter{footnote}{2}\footnote{For notational convenience we shall adopt the slightly
nonstandard convention that $(i,j)$ denotes the point in $\rset^2$ with
vertical coordinate~$i$ and horizontal coordinate~$j$. Thus the region
$\Lambda^0_{m,n}$ has vertical and horizontal dimensions $m$ and~$n$,
respectively.})
In this section we spell out a few basic properties of~$\Omn$ that are
either well known (see, e.g., \cite{DeLoera}) or easily deduced.

As for any point set in~$\rset^2$, the numbers of triangles and edges in
every triangulation of~$\Lambda^0_{m,n}$ are determined by the total
number of points
and the number of points on the boundary of the convex hull of the
point set. Thus every
triangulation in $\Omn$ has $(m+1)(n+1)$ points, $2mn$ triangles,
and $3mn+m+n$ edges, $2(m+n)$ of which are on the boundary and
the remainder in the interior.
Moreover, since $\Lambda^0_{m,n}$ consists of the integer
points inside a lattice polygon, it follows from Pick's theorem that
every triangle is \textit{unimodular}, that is, has area~$1\over2$.

In any triangulation, the \textit{midpoints} of the edges are precisely
the points of
the half-integer lattice with the integer points removed, that is,
\[
\Lmn:=  \bigl\{0,\tfrac{1}{2},1,\tfrac{3}{2},\ldots,m-
\tfrac{1}{2},m \bigr\} \times \bigl\{0,\tfrac{1}{2},1,
\tfrac{3}{2}, \ldots,n-\tfrac{1}{2},n \bigr\} \setminus
\Lambda^0_{m,n}.
\]
Thus we may think of a triangulation as an assignment
$\sigma=\{\sigma_x\}_{x\in\Lmn}$ of an edge to each point of~$\L
=\Lmn$.
We call midpoints~$x$ with one integer and one half-integer
coordinate ``Type~1,'' and those with two half-integer coordinates ``Type~2.''
Figure~\ref{fig:range} shows some of the possible configurations for
the edge
at Types~1 and~2 points~$x$.

\begin{figure}[b]

\includegraphics{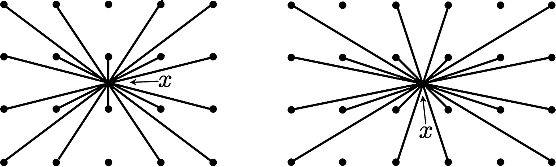}

\caption{Possible configurations of the edge at~$x$. The configurations
differ slightly according to whether~$x$ is Type~1 (left-hand
figure) or Type~2 (right-hand figure). For Type~1 points
the minimum length configuration is vertical (or horizontal); for
Type~2 it is a unit diagonal.}
\label{fig:range}
\end{figure}

Note that the minimum length configuration of the edge
at a Type~1 point is horizontal or vertical, and at a Type~2 point it
is a unit
diagonal.
The configurations of the boundary edges at points
$x=(0,j), (m,j)$ [resp., $x=(i,0),(i,n)$] are forced to be horizontal
(resp., vertical) in all triangulations. We call triangulations in
which all
edges have minimal length \textit{ground state} triangulations. There are
exactly $2^{mn}$ ground state triangulations, which are equivalent up to
flipping of unit diagonals.

We will also consider a generalization to triangulations with the
configurations of some interior edges fixed. Let $\Delta$ be an
arbitrary subset of~$\L$, and $\eta=\{\eta_x\}_{x\in\Delta}$ an arbitrary
assignment of edges to the points of~$\Delta$ that is consistent; that
is, none of
the edges cross. We denote by $\O(\eta,\Delta)$ the set of triangulations
$\sigma\in\O$ that agree with~$\eta$ on~$\Delta$, that is, all possible
completions
of the partial triangulation~$\eta$. Since triangulations are just
maximal consistent
sets of edges, it is clear that for any~$\eta$ at least one such
completion always
exists. We refer to the edges in~$\eta$ as \textit{constraints}.
See Figure~\ref{fig:trfig1}.

\begin{figure}

\includegraphics{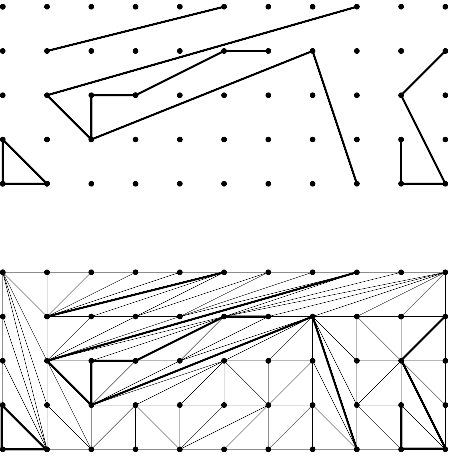}

\caption{Example of a triangulation of a $4\times10$ region. Above:
constraint edges.
Below: a triangulation consistent with the constraints (in bold).}
\label{fig:trfig1}
\end{figure}

An important application of constraints is to triangulations of
arbitrary lattice
polygons (i.e., polygons---not necessarily convex---whose vertices are lattice
points in~$\zset^2$). Let $P$ be the set of integer points in a
lattice polygon.
Then triangulations of the point set~$P$ correspond to triangulations
of an
enclosing rectangle~$\Lambda^0_{m,n}$ with the polygon edges as
constraints (and
an arbitrary fixed triangulation outside the polygon). See Figure~\ref
{fig:polyfig}
for an example.

\begin{figure}[t]

\includegraphics{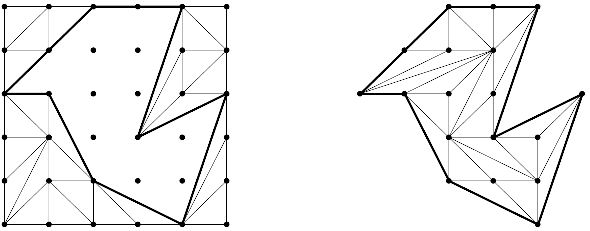}

\caption{Example of a triangulation of a lattice polygon embedded in a
$5\times5$ square.
Left: the polygon edges (in bold), along with the edges of an arbitrary
triangulation
outside the polygon, are fixed as constraints. Right: a triangulation
consistent with the constraints.}
\label{fig:polyfig}
\end{figure}

\begin{figure}[b]

\includegraphics{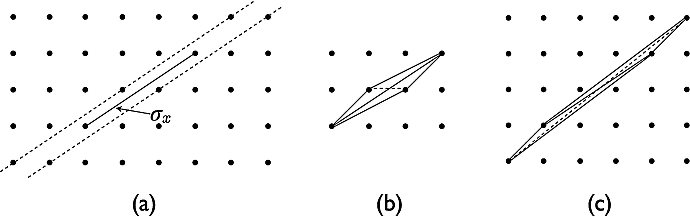}

\caption{Flips of an edge~$\sigma_x$. \textup{(a)}~Possible locations of the
third vertex
of the triangles containing~$\sigma_x$ are the integer points on the
dotted lines.
\textup{(b)}~Unique flip of $\sigma_x$ to a shorter edge ($\sigma_x$ is the
diagonal of
its minimal parallelogram). \textup{(c)}~A flip of~$\sigma_x$ to a longer edge.}
\label{fig:fund}
\end{figure}

%

\subsection{The flip graph}\label{subsec:flipgraph}
Any interior edge~$\sigma_x$ lies in two triangles. From area considerations,
the third vertex of each of these triangles is an integer point on the
line parallel
to~$\sigma_x$ that passes through the closest integer point on either side
of~$\sigma_x$; see Figure~\ref{fig:fund}(a). The integer points on
these lines
occur periodically at intervals equal to the length of~$\sigma_x$,
and are positioned symmetrically on either side of~$\sigma_x$, so that $x$
is the midpoint of the line segment joining a point to its symmetric
pair. The two triangles
containing~$\sigma_x$ form a quadrilateral; this quadrilateral is
convex if and only if the
third vertices of the triangles are a symmetric pair. In this case
$\sigma_x$ is
the diagonal of a parallelogram and thus can be replaced by the other
diagonal to
yield another valid triangulation: $\sigma_x$ is then said to be
\textit{flippable}.
We observe that, for any $\sigma_x$ that is not of minimum length,
there is a~\textit{unique} parallelogram in which $\sigma_x$ is the longer
diagonal; we
call this the \textit{minimal parallelogram} of~$\sigma_x$. Thus
there is a unique flip that takes $\sigma_x$ to a \textit{shorter} edge.
(There are many possible flips that make~$\sigma_x$ longer.) See
Figure~\ref{fig:fund}(b), (c).

We observe also that flipping an edge cannot change its slope from positive
to negative (or vice versa), unless the edge is a unit diagonal. Thus
any sequence
of flips that changes the sign of the slope of an edge must pass
through the minimum
configuration of the edge (horizontal, vertical or unit diagonal).
Figure~\ref{fig:switch} shows how this is achieved.

\begin{figure}[t]

\includegraphics{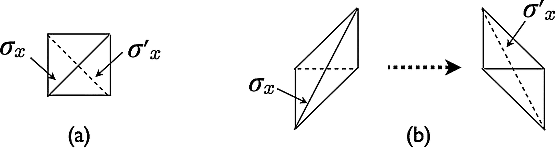}

\caption{Changing the sign of the slope of~$\sigma_x$. \textup{(a)}~If $x$ is Type~2,
the change can be effected only when $\sigma_x$ is a unit diagonal.
\textup{(b)}~If $x$ is Type~1, $\sigma_x$ must first become horizontal (or
vertical), and then be flipped
after the orientation of its surrounding parallelogram has been changed.
In both cases the new configuration is denoted~$\sigma'_x$.}
\label{fig:switch}
\end{figure}

The flipping operation induces a graph on vertex set~$\O$ known as the
``flip graph''
in which two triangulations are adjacent if and only if they differ by
one edge flip. The flip graph
is well known to be connected. To see this, note that a longest edge in
any nonground-state
triangulation is always flippable to a shorter edge because if $\sigma
_x$ is the longest
edge in both of its triangles, then the triangles must form the minimal
parallelogram
of~$\sigma_x$. Hence, for any initial triangulation~$\sigma$, there
exists a sequence
of flips that reaches a ground state. Since flips are reversible, any
triangulation is also
reachable from a ground state. And finally, any ground state is
reachable from any other
by flipping unit diagonals.

For triangulations with constraints we get a flip graph on the smaller
set $\O(\eta,\Delta)$. As we shall see in Section~\ref{subsec:gsl}, in
the presence of
constraints there is still a well-defined ground state in which all
configurations
$\sigma_x$ are of minimal length (subject to the constraints).
Moreover, it can
be shown (see Lemma~\ref{lem:connected} below) that any longest
edge that is not in its ground state configuration is flippable to a
shorter edge,
and hence the flip graph remains connected in this case. In
Proposition~\ref{prop:flipdist}
we shall see that the flip graph actually enjoys a very strong
structural property,
which allows one to exactly compute the shortest path distance (or \textit{flip distance})
between any pair of triangulations.

\subsection{Weighted triangulations and Glauber dynamics}\label
{subsec:glauber}
We associate with a triangulation~$\sigma$ a \textit{weight}~$\lambda
^{|\sigma|}$,
where $\lambda$ is a positive real parameter, and $|\sigma|$ denotes the
\textit{total length} of~$\sigma$, that is, $|\sigma|=\sum_x|\sigma
_x|$; here
$|\cdot|$ denotes the $\ell_1$ (Manhattan) metric. These weights
induce a
probability distribution~$\mu$ on~$\O$ [or, in the presence of constraints,
on $\O(\eta,\Delta)$] via
%
\begin{equation}
\label{eq:gibbs} \mu(\sigma) = \frac{\lambda^{|\sigma|}}{Z},
\end{equation}
where the normalizing factor $Z=\sum_\sigma\lambda^{|\sigma|}$ is the
\textit{partition function}. We refer to~(\ref{eq:gibbs}) as the \textit{Gibbs distribution}.
The case $\lambda=1$ corresponds to the uniform
distribution on triangulations; the cases $\lambda<1$ (resp., $\lambda>1$)
favor shorter (resp., longer) triangulations. Note that when $\lambda<1$
the triangulations of maximum weight are precisely the \textit{ground
state}
triangulations, that is, those that minimize~$|\sigma|$.\footnote{This
explains our
choice of the term ``ground state''; in statistical physics, ground
states are
configurations of minimum energy, and thus maximum weight in the Gibbs
distribution. The term ``ground state'' is less appropriate when
$\lambda>1$,
but our focus in this paper is mainly on the case $\lambda<1$.}

The flip graph forms the basis of a local Markov chain (or \textit{Glauber
dynamics})
on~$\O$ [or, more generally, on $\O(\eta,\Delta)$]
as follows. In state $\sigma\in\O$, pick a point $x\in\L$
uniformly at
random;\footnote{In the presence of constraints, we pick $x$ u.a.r. from the midpoints of
\textit{nonconstraint}
edges.} if the edge~$\sigma_x$ is flippable to edge $\sigma'_x$
(producing a new
triangulation~$\sigma'$), then flip it with probability
\[
\frac{\mu(\sigma')}{\mu(\sigma')+\mu(\sigma)} = \frac{\lambda^{|\sigma'_x|}}{\lambda^{|\sigma'_x|} + \lambda
^{|\sigma
_x|}},
\]
else do nothing. Since the flip graph is connected, this so-called
``heat-bath'' dynamics
defines an ergodic Markov chain on~$\Omega$ [or on $\O(\eta,\Delta)$]
that is reversible with respect to~$\mu$. Hence the dynamics converges
to the stationary
distribution~$\mu$. We will analyze convergence to stationarity
via the standard notion of \textit{mixing time}, defined by
%
\begin{equation}
\label{eq:tmix} \tmix= \inf \Bigl\{ t\in\bbN\dvtx \max_{\si\in\O}
\bigl\|p^t(\si,\cdot) - \mu\bigr\| \leq1/4 \Bigr\},
\end{equation}
where $p^t(\si,\cdot)$ denotes the distribution after $t$ steps when
the initial state is $\si$, and
$\|\nu-\mu\|=\frac{1}2\sum_{\si\in\O}|\nu(\si)-\mu(\si)|$ is
the usual
total variation distance between
two distributions~$\mu,\nu$.

\section{Structural properties}

In this section we establish some basic geometric properties of lattice
triangulations and
the flip graph. Throughout we will work with arbitrary constraints, so
that our results
apply in particular to triangulations of any lattice polygon.

\subsection{The minimal parallelogram and excluded region}
\label{subsec:minpar}
We begin with a useful property of the lattice~$\zset^2$. 
In the sequel the distance of a point from a line will always mean the
usual Euclidean distance.

%
\begin{proposition}\label{prop:closest}
Let $a,b$ be positive and coprime, and consider the infinite line
through $(0,0)$
of slope $(a,b)$. Then the points of~$\zset^2$ that are closest
to this line (and are not on the line) are at horizontal distance~$a^{-1}$
and vertical distance~$b^{-1}$ on either side of the line.
\end{proposition}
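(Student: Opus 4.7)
The plan is to reduce the geometric statement to an elementary number-theoretic one using B\'ezout's identity. Throughout, I will use the paper's convention that $(i,j)$ denotes a point with vertical coordinate~$i$ and horizontal coordinate~$j$, so that the line through the origin with ``slope $(a,b)$'' has direction vector $(a,b)$ and equation $b\,i - a\,j = 0$.

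First I would identify the integer points lying on the line itself. A point $(p,q)\in\zset^2$ lies on the line iff $bp=aq$; since $\gcd(a,b)=1$, this forces $a\mid p$ and $b\mid q$, so that the integer points on the line are precisely $\{k(a,b):k\in\zset\}$.

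Next I would translate the three relevant distances from $(p,q)$ to the line into the single integer quantity $N(p,q):=bp-aq$. An elementary calculation yields that the perpendicular distance equals $|N(p,q)|/\sqrt{a^2+b^2}$, the horizontal distance equals $|N(p,q)|/a$, and the vertical distance equals $|N(p,q)|/b$. Hence ``closest to the line but not on it'' is equivalent to minimizing $|N(p,q)|$ over integer points with $N(p,q)\neq 0$.

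Since $a,b$ are coprime, B\'ezout's identity gives integers $p_0,q_0$ with $bp_0-aq_0=1$, and of course $N(p,q)$ is an integer for every $(p,q)\in\zset^2$, so the minimum nonzero value of $|N(p,q)|$ is exactly~$1$, attained on two cosets (namely $N=+1$ and $N=-1$), which sit on opposite sides of the line. Substituting $|N(p,q)|=1$ into the formulas of the previous paragraph gives horizontal distance $1/a$ and vertical distance $1/b$ for every such closest point, which is the claim. No step looks like a real obstacle; the only thing to be mildly careful about is the convention for slope vs.\ coordinates, which is what makes the horizontal distance come out to $1/a$ (dividing by the vertical component of the direction vector) rather than $1/b$.
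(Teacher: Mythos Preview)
Your proof is correct and takes a genuinely different route from the paper's. The paper argues combinatorially: looking at the $a$ horizontal lines at integer heights $0,1,\ldots,a-1$, it observes that the horizontal distances from the nearest integer point (on a fixed side) to the line are all distinct and hence must exhaust the set $\{i/a:i=1,\ldots,a\}$; the minimum is therefore $1/a$, and periodicity extends this to all of~$\zset^2$. You instead linearize the problem via the integer-valued form $N(p,q)=bp-aq$, note that all three distances are proportional to $|N|$, and invoke B\'ezout to conclude that $\min_{N\ne 0}|N|=1$.

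Your argument is shorter and arguably more transparent; it also makes the ``on either side'' clause immediate (the cosets $N=\pm 1$). The paper's approach, on the other hand, yields a bit more structure for free: it shows that the full set of horizontal distances realized at integer heights is exactly $\{i/a\}$, which is precisely what is needed for the subsequent Proposition~\ref{prop:parallel} (that the family of parallel lines at horizontal spacing $1/a$ partitions~$\zset^2$). That said, your approach recovers this too, since $N:\zset^2\to\zset$ is surjective by B\'ezout and its level sets are exactly those parallel lines.
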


\begin{pf}
Consider two successive integer points on the line [say, $(0,0)$ and
$(a,b)$], and the $a$ horizontal lines at integer heights $0,1,2,\ldots,a-1$.
On each such horizontal line, consider the closest integer point to the line
that lies strictly to the right of the line. (By symmetry, the points
on the left of the line behave similarly.)
It is readily verified that the horizontal distances of these points from
the line are all distinct elements of the set $\{\frac
{i}{a}\dvtx i=1,2,\ldots,a\}$.
Hence the closest of these points is at horizontal distance~$a^{-1}$.
This pattern repeats with period~$a$ throughout $\zset^2$.
The same argument applies to the vertical distances, with period~$b$.
Finally, note that a point at horizontal distance~$a^{-1}$ is also at
vertical distance~$b^{-1}$.
\end{pf}

An immediate corollary is the following.

%
\begin{proposition}\label{prop:parallel}
Let $a,b$ be positive and coprime. The infinite family of parallel
lines with slope
$(a,b)$, horizontal separation~$a^{-1}$ (and thus vertical
separation~$b^{-1}$) and such that one line in the family passes
through $(0,0)$ partitions~$\zset^2$, in the sense that every
integer point lies on one of the lines.
\end{proposition}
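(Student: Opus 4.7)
The plan is to deduce this statement directly from Proposition~\ref{prop:closest} by viewing the horizontal distance from $\zset^2$ to a reference line as a $\zset$-linear functional whose image is therefore a discrete additive subgroup of~$\rset$.

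First I would fix $L_0$ to be the line of slope $(a,b)$ through the origin, and for each integer point $P=(i,j)\in\zset^2$ let $h(P)$ denote its signed horizontal distance to $L_0$. With the convention that $(i,j)$ has vertical coordinate~$i$ and horizontal coordinate~$j$, the line $L_0$ has equation $bi-aj=0$, so $h(P)$ is proportional to $aj-bi$ and is in particular a $\zset$-linear function of $(i,j)$. Consequently $h(\zset^2)$ is an additive subgroup of $\rset$, and it is nontrivial since $h(1,0)\neq 0$.

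Next, by Proposition~\ref{prop:closest}, the smallest element of $|h(\zset^2)|$ strictly greater than~$0$ equals $a^{-1}$. Any additive subgroup of~$\rset$ possessing a minimal positive element~$\alpha$ must equal $\alpha\zset$, so $h(\zset^2)=a^{-1}\zset$. Thus every integer point $P$ satisfies $h(P)=k/a$ for some $k\in\zset$, meaning $P$ lies on the line parallel to $L_0$ obtained by horizontal translation by $k/a$. These translates are precisely the members of the prescribed family (successive lines of slope $(a,b)$ with horizontal spacing $a^{-1}$), so the family covers $\zset^2$. The parallel statement about vertical separation $b^{-1}$ is immediate from the slope, and (for completeness) surjectivity of $h$ onto $a^{-1}\zset$---which shows each line is nonempty---follows from Bézout applied to the coprime pair $(a,b)$.

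There is essentially no obstacle: once one notices that horizontal distance is $\zset$-linear, Proposition~\ref{prop:closest} does all the work. The only point requiring mild care is keeping the paper's nonstandard $(i,j)=(\text{vertical},\text{horizontal})$ convention straight when writing down $h$, so that the minimal positive value of $|h|$ on $\zset^2$ really is $a^{-1}$ (and not $b^{-1}$, which is what one gets if the coordinates are swapped).
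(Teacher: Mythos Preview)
Your proof is correct and takes essentially the same approach as the paper, which simply declares the proposition an ``immediate corollary'' of Proposition~\ref{prop:closest} without writing anything further. Your subgroup-of-$\rset$ formulation is a clean way to make that corollary explicit using only the \emph{statement} of Proposition~\ref{prop:closest} (the paper's implicit one-liner presumably leans on the distance set $\{i/a\}$ computed inside the proof of that proposition), but the substance is the same.
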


%

Given any triangulation~$\sigma$, consider an arbitrary edge~$\sigma_x$
that is not horizontal or vertical. Let its slope be $(a,b)$ with $a,b$ coprime,
and denote its upper and lower endpoints by~$p_1,p_2$, respectively.
By Proposition~\ref{prop:closest} and its proof, we may identify the unique
closest integer point to the right of~$\sigma_x$ that lies horizontally
and vertically
between $p_1$ and~$p_2$. Call this point~$p_3$. By symmetry
there is a corresponding closest point~$p_4$ on the other side
of~$\sigma_x$,
so that $x$ is the midpoint of the line segment~$p_3p_4$. The points
$p_1,p_3,p_2,p_4$ form the vertices of the \textit{minimal parallelogram}
of~$\sigma_x$ mentioned earlier: $\sigma_x$~can be flipped to the shorter
edge $p_3p_4$ if and only if all edges of the minimal parallelogram are
present in
the triangulation.

\begin{figure}

\includegraphics{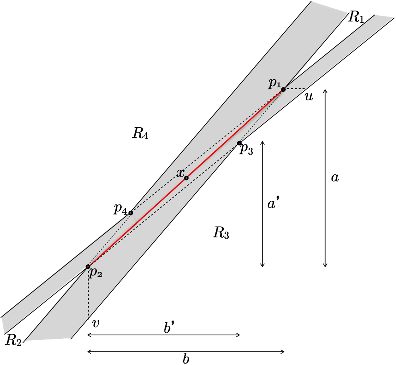}

\caption{The minimal parallelogram and excluded region of an
edge~$\sigma_x$.}
\label{fig:excluded}
\end{figure}

Let $\sigma_x$ be any nonhorizontal and nonvertical
triangulation edge with endpoints $p_1,p_2$ and minimal
parallelogram $p_1p_3p_2p_4$. We define the \textit{excluded region}
of $\sigma_x$ as the union of two strips with parallel sides that
extend to the boundaries of~$\Lambda^0$, whose intersection
is the minimal parallelogram of~$\sigma_x$; see the shaded region
in Figure~\ref{fig:excluded}.
The complement of the excluded region consists of four disjoint closed cones,
which we shall call $R_1,R_2,R_3,R_4$, as shown in Figure~\ref{fig:excluded}
(so the apex of~$R_i$ is~$p_i$).
The following proposition establishes that the excluded region
is free of integer points.

\begin{proposition}\label{prop:excluded}
The excluded region of any nonhorizontal and nonvertical
triangulation edge does
not contain any integer points in its interior.
\end{proposition}

%

\begin{pf}
Consider an arbitrary triangulation edge $\sigma_x$, not horizontal or
vertical,
with endpoints $p_1,p_2$ and minimal
parallelogram $p_1p_3p_2p_4$, as illustrated in Figure~\ref{fig:excluded}.
It suffices to show that each of the two pairs
of parallel lines that bound the strips in Figure~\ref{fig:excluded}
are adjacent lines in a family
as in Proposition~\ref{prop:parallel}.
Consider first the pair of lines through $p_1p_4$ and $p_2p_3$. We need
to verify
that~$p_1$ is a closest point to the line through $p_2p_3$. Assume
w.l.o.g. that
$p_2$ is the point~$(0,0)$, and $p_1$ is the point~$(a,b)$, with $a,b$
positive and
coprime. Suppose $p_3$ is the point $(a',b')$, and recall that the
horizontal distance
of~$p_3$ from~$\sigma_x$ is~$a^{-1}$. By similarity of triangles, the
horizontal
distance of~$p_1$ from the line through $p_2p_3$ (distance $p_1u$ in
Figure~\ref{fig:excluded})
is $\frac{a}{a'}\times\frac{1}{a}=\frac{1}{a'}$. Hence, by
Proposition~\ref{prop:closest},
$p_1$ is a closest point to this line, and by Proposition~\ref
{prop:parallel} this
parallel strip contains no integer points.

An analogous argument using vertical separation shows that the distance
$p_2v$ is~$\frac{1}{b'}$ and hence the other
parallel strip, bounded by lines through $p_2p_4$ and $p_1p_3$, also
contains no
integer points. This completes the proof.
\end{pf}

\subsection{The ground state lemma}
\label{subsec:gsl}
We turn now to the structure of the \textit{ground state}
triangulations, that is, those of minimum
total length. These are the triangulations of maximum weight in the probability
distribution~(\ref{eq:gibbs}) when $\lambda<1$, and they play a central
role in our analysis
of spatial mixing in Section~\ref{sec:equilibrium}.
In the absence of constraints, the ground state triangulations are
very simple: every edge is either horizontal or vertical or a unit
diagonal, so in particular,
the ground state is unique up to flipping of the unit diagonals.
The presence of constraint edges, however, may change the ground
state considerably. In this subsection we prove that the ground state remains
(essentially) unique, and can be described easily; see Figure~\ref
{fig:trfig1gs} for an example.

\begin{lemma}[(Ground state lemma)]\label{lemma:gsl}
Given any set of constraint edges, the ground state triangulation is unique
(up to possible flipping of unit diagonals) and can be constructed by
placing each edge in its minimal length configuration consistent with the
constraints, independent of the other edges.
\end{lemma}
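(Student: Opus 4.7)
The plan is to construct the ground state greedily, independently at each midpoint, and then verify that the resulting configuration is a consistent triangulation of minimum total length. For each non-constraint midpoint $x\in\L\setminus\Delta$, let $\eta_x$ denote a shortest edge through $x$ with lattice endpoints that does not cross any constraint edge; this is well defined because, as noted in Section~\ref{subsec:triangs}, the constraints always admit at least one completion to a full triangulation of $\Lomn$, which provides a valid candidate at~$x$. I then claim: (i) the family $\{\eta_x\}_{x\notin\Delta}$ together with the constraint edges is pairwise non-crossing; and (ii) the resulting configuration is a ground state triangulation, unique up to the orientation of unit diagonals at those Type-2 midpoints admitting two shortest options.

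Part (ii) is essentially immediate once (i) is known. The collection $\{\eta_x\}_{x\notin\Delta}\cup\{\eta_z\}_{z\in\Delta}$ consists of $|\L|=3mn+m+n$ non-crossing edges with distinct midpoints, hence is a maximal non-crossing set of edges with lattice endpoints, i.e., a triangulation. For any triangulation $\sigma$ consistent with the constraints, $\sigma_x$ is a valid candidate in the definition of~$\eta_x$ at every $x\notin\Delta$, so $|\sigma_x|\geq|\eta_x|$ edgewise and therefore $|\sigma|\geq\sum_x|\eta_x|$, with equality iff $|\sigma_x|=|\eta_x|$ for every~$x$. At a Type-1 midpoint the shortest admissible edge is always unique, while at a Type-2 midpoint at most the two orientations of a unit diagonal can tie for the minimum length, so every ground state agrees with the greedy configuration up to flips of unit diagonals.

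The main obstacle is part (i). Suppose for contradiction that $\eta_x$ and $\eta_y$ cross for some distinct $x,y\in\L\setminus\Delta$. A direct check using Figure~\ref{fig:range} shows that two edges of absolute minimum length (horizontal, vertical, or unit diagonal) at distinct midpoints cannot cross. Hence at least one of $\eta_x,\eta_y$, say $\eta_x$, is not of absolute minimum length, so it has a well-defined minimal parallelogram $p_1p_3p_2p_4$ and excluded region $E_x$ (Section~\ref{subsec:minpar}). The short diagonal $p_3p_4$ is strictly shorter than $\eta_x$, so by minimality it must cross some constraint~$c$ (otherwise $p_3p_4$ would itself be an admissible shorter candidate at~$x$). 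By Proposition~\ref{prop:excluded}, the interior of $E_x$ contains no lattice points, so the endpoints of $c$ and of $\eta_y$ lie in the four cones $R_1,R_2,R_3,R_4$, with each of these two edges having its endpoints in opposite cones across~$\eta_x$. One then concludes by exhibiting a strictly shorter admissible edge through~$y$, typically either a segment parallel to $\eta_x$ contained in $E_x$, or a short detour through one of the corner points $p_1,\ldots,p_4$: this alternative exists because $\eta_y$ must bridge the full width of the narrow corridor $E_x$ while $c$ occupies only part of it. This contradicts the minimality of~$\eta_y$.

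The main technical difficulty is the geometric case analysis in~(i): Proposition~\ref{prop:excluded} restricts attention to a small enumeration of configurations of the cones containing the endpoints of $c$ and $\eta_y$, but in each case one must explicitly identify a shorter admissible edge through~$y$. Extra care is needed when $y$ lies in one of the strip extensions of $E_x$ rather than inside the minimal parallelogram itself, since then the shorter alternative must be routed through a suitable integer point on the boundary of the strip.
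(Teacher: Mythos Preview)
Your overall plan matches the paper's: define the greedy minimal edge at each midpoint, prove pairwise non-crossing, and conclude. Part~(ii) is essentially the paper's argument. The gap is in part~(i): your proposed method of \emph{constructing} a strictly shorter admissible edge through~$y$ is not carried out, and your suggested candidates do not obviously work. A segment through~$y$ parallel to~$\eta_x$ will generically have no lattice endpoints (the interior of~$E_x$ is lattice-free, and the parallel lines carrying lattice points are a discrete family that need not contain~$y$); and a ``detour through one of~$p_1,\ldots,p_4$'' is not an edge with midpoint~$y$ unless you say which two lattice points you take. Your final paragraph in effect acknowledges that the case analysis is left open.

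The paper's route (Propositions~\ref{prop:minimal} and~\ref{prop:intersect}) avoids any such construction via a \emph{symmetric length comparison}. From your own setup, the constraint~$c$ spanning~$\eta_x$ has its endpoints in~$R_1,R_2$. Any edge crossing~$\eta_x$ with lattice endpoints must have those endpoints in~$R_1,R_2$ or in~$R_3,R_4$; the latter is impossible for~$\eta_y$ since such a segment would cross~$c$. Hence~$\eta_y$ has endpoints in~$R_1,R_2$, which immediately gives $|\eta_y|>|\eta_x|$. But then~$\eta_y$ is itself not of absolute minimum length, so the identical argument with the roles of~$x$ and~$y$ interchanged yields $|\eta_x|>|\eta_y|$, a contradiction. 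No shorter edge is ever exhibited.

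A smaller point: your uniqueness assertion in~(ii)---that at a constrained midpoint the shortest admissible edge is unique (and at an unconstrained Type-2 midpoint only the two unit diagonals can tie)---is stated without proof. It is not automatic that two distinct consistent edges of equal minimal length cannot coexist at a constrained~$x$. This too comes for free from the spanning-constraint characterization: once~$\eta_x$ is spanned, \emph{every} other consistent edge at~$x$ has endpoints in~$R_1,R_2$ and is strictly longer (Proposition~\ref{prop:minimal}).
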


\begin{figure}

\includegraphics{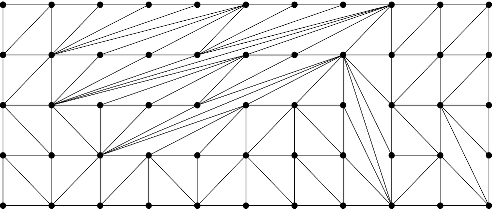}

\caption{Example of a ground state triangulation in the presence of the
constraints given in Figure~\protect\ref{fig:trfig1}.}
\label{fig:trfig1gs}
\end{figure}

Lemma~\ref{lemma:gsl} will follow from a short sequence of observations.
Given a set of constraints~$(\eta,\Delta)$, we call a midpoint $x$
\textit{constrained} if the minimal configuration of the edge at $x$ consistent
with the constraints is not horizontal or
vertical or unit diagonal, or if the minimal configuration is a unit
diagonal and its other unit diagonal configuration is not consistent.

Let $\sigma_x$ be a consistent configuration for the edge at~$x$ with endpoints
$p_1,p_2$ as in Figure~\ref{fig:excluded}. 
We say that $\sigma_x$ is
\textit{spanned} by a constraint edge~$C$ if: (i) $\s_x$ and $C$ are
compatible
(i.e., they do not cross each other); and (ii) $C$ crosses the line segment
$p_3p_4$ in Figure~\ref{fig:excluded} (i.e., the shorter diagonal of the
minimal parallelogram of~$\sigma_x$). Note that this necessarily
implies that the endpoints of~$C$ lie in the regions~$R_1$ and~$R_2$.

\begin{proposition}\label{prop:minimal}
Given a set of constraints, a consistent configuration~$\sigma_x$ for
the edge
at a constrained $x$ is minimal if and only if $\sigma_x$ is spanned by
a constraint
edge. Moreover, the minimal configuration $\sigma_x=\bar\sigma_x$ is unique.
\end{proposition}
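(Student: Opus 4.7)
The plan is to prove both implications and uniqueness separately, relying on the geometry of the minimal parallelogram developed in the preceding subsection, in particular Propositions~\ref{prop:closest} and~\ref{prop:excluded}.

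For the forward direction I would argue by contraposition: assume $\sigma_x$ is a consistent configuration at a constrained~$x$ that is not spanned by any constraint, and exhibit a strictly shorter consistent configuration. The main case is when $\sigma_x$ is not horizontal, vertical, or a unit diagonal, so that it has a well-defined minimal parallelogram $p_1 p_3 p_2 p_4$. I would take $\sigma_x' := p_3 p_4$ as the candidate: Proposition~\ref{prop:closest} (and its proof) guarantees that $\sigma_x'$ passes through~$x$ as its midpoint, has no interior integer point, and is strictly shorter than~$\sigma_x$ in $\ell_1$-length. Since ``not spanned'' means precisely that no constraint crosses $p_3 p_4$, the candidate $\sigma_x'$ is consistent, contradicting minimality. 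The degenerate cases in which $\sigma_x$ is itself horizontal, vertical, or a unit diagonal are ruled out directly from the definition of ``constrained~$x$'': if~$x$ is Type~1 and $\sigma_x$ is the unique trivial minimum, its consistency contradicts~$x$ being constrained; if~$x$ is Type~2 and $\sigma_x$ is a unit diagonal, then the other unit diagonal is exactly~$p_3 p_4$, and not being spanned means this alternative is also consistent, again contradicting the definition of constrained.

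For the reverse direction I would assume $\sigma_x$ is spanned by some constraint~$C$, and show that every consistent configuration~$\sigma_x''$ at~$x$ satisfies $|\sigma_x''|\geq|\sigma_x|$. The key geometric ingredient is that, by the note following Proposition~\ref{prop:excluded}, the endpoints $c_1,c_2$ of $C$ lie in the far cones $R_1,R_2$, so $C$ enters the minimal parallelogram of $\sigma_x$ by crossing one of its ``$p_1p_3$-type'' sides, exits by crossing the parallel side, and meets the short diagonal at some point $y\neq x$ (the case $y=x$ is ruled out, since then the consistent edge $\sigma_x$ would have to coincide with a sub-segment of~$C$). I would then argue that any integer edge through~$x$ with length strictly less than $|\sigma_x|$ has its two endpoints straddling the line $\ell_C$ carrying~$C$, and that the crossing of such an edge with $\ell_C$ necessarily occurs inside the segment~$C$ because $c_1,c_2$ lie far beyond the parallelogram while $\sigma_x''$ is short and anchored at~$x$. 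Uniqueness of the minimum configuration then follows because any two minimal consistent configurations would both be spanned by (possibly different) constraints; a comparison of their minimal parallelograms and the associated spanning constraints would force a crossing of two consistent edges, a contradiction.

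The main obstacle, by far, is the geometric claim in the reverse direction that any strictly shorter edge at~$x$ must cross the spanning constraint~$C$. This will require a careful case analysis according to which of the cones $R_3,R_4$ (or the boundary lines of the excluded region) the endpoints of $\sigma_x''$ lie in, together with a quantitative estimate ensuring that the portion of $\ell_C$ near~$x$ actually lies inside the segment~$C$ rather than beyond its endpoints. The uniqueness claim is likely subtler than the two implications themselves, and I expect it to need an additional observation---perhaps that the direction of the minimum configuration is pinned down by the ``innermost'' spanning constraint, or equivalently by a greedy sweep over the allowed slopes at~$x$.
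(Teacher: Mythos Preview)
Your forward direction is essentially the paper's: if no constraint crosses $p_3p_4$, then $p_3p_4$ is a strictly shorter consistent configuration (or, in the unit-diagonal case, the other diagonal is consistent), contradicting either minimality or the definition of ``constrained''.

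For the reverse direction and uniqueness, however, you are working much harder than necessary, and the ``main obstacle'' you anticipate is a symptom of a missing idea. The paper's key observation is that Proposition~\ref{prop:excluded} already gives a clean dichotomy: since the excluded region contains no integer points, \emph{every} edge through~$x$---not just the shorter ones---has its endpoints either in $R_1,R_2$ or in $R_3,R_4$. There is no case analysis on boundary lines and no need to assume $|\sigma_x''|<|\sigma_x|$ at the outset. If the endpoints lie in $R_3,R_4$, the edge must cross~$C$ (which runs from $R_1$ to $R_2$ through the segment $p_3p_4$, separating the $p_3$-side from the $p_4$-side); hence every consistent configuration has its endpoints in $R_1,R_2$. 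But $p_1,p_2$ are the apices of those cones, so any such edge has $\ell_1$-length at least $|\sigma_x|$, with equality only for $\sigma_x$ itself.

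This single argument delivers minimality \emph{and} uniqueness simultaneously. Your proposed quantitative check that the crossing with the line $\ell_C$ falls inside the segment~$C$, and your separate uniqueness argument via comparing two minimal parallelograms, are both unnecessary once you use the $R_1,R_2$ versus $R_3,R_4$ dichotomy.
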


\begin{pf}
Assume first that $\sigma_x$ is minimal, and $x$ is constrained. Then
in particular
the edge $p_3p_4$ must not be a consistent configuration for the edge at~$x$
(since either it would be a shorter configuration,
or in the case of a unit diagonal it is explicitly prohibited). Hence a
constraint
edge must pass through~$p_3p_4$.

For the other direction,
suppose $\sigma_x$ is spanned by a constraint edge.
We claim that any consistent configuration for the edge at~$x$ must
have its endpoints
in the cones~$R_1,R_2$ defined by~$\sigma_x$; see Figure~\ref{fig:excluded}.
This immediately implies that $\sigma_x$ is the unique minimal
configuration for
this edge. To see the above claim, let $C$ be the spanning constraint edge.
Since $C$ crosses $p_3p_4$ and does not cross $p_1p_2$, its endpoints
must be in $R_1$ and $R_2$.
Also, since any configuration~$\sigma'_x$ for the edge at~$x$ must be
obtained from~$\sigma_x$ by rotating about its midpoint~$x$, and there
are no integer
points in the excluded region, the endpoints of~$\sigma'_x$ must lie either
in~$R_1,R_2$ or in $R_3,R_4$. But an edge between~$R_3$ and~$R_4$ would
necessarily cross~$C$, so we are left only with the possibility $R_1,R_2$.
\end{pf}

%
%
%
%

\begin{proposition}\label{prop:intersect}
Given a set of constraints, let $\bar\sigma_x,\bar\sigma_y$ be minimal
configurations
of any two distinct edges. Then $\bar\sigma_x,\bar\sigma_y$ do not cross.
\end{proposition}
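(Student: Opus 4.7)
The plan is to argue by contradiction, using the tools developed earlier in this section. Assume $\bar\sigma_x$ and $\bar\sigma_y$ cross; the goal is then to exhibit a strictly shorter consistent configuration at either $x$ or $y$, contradicting the minimality statement of Proposition~\ref{prop:minimal}.

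The first step is to locate the endpoints of one edge relative to the other. Let $P_x$ denote the minimal parallelogram of $\bar\sigma_x$, with vertices $p_1,p_3,p_2,p_4$, and let $R_1^x,R_2^x,R_3^x,R_4^x$ be the four cones that make up the complement of the excluded region of $\bar\sigma_x$ (as in Proposition~\ref{prop:excluded}). Since the endpoints of $\bar\sigma_y$ are integer points, Proposition~\ref{prop:excluded} forces them into $R_1^x\cup R_2^x\cup R_3^x\cup R_4^x$; and since $\bar\sigma_y$ crosses $\bar\sigma_x$, they must lie on opposite sides of the line through $\bar\sigma_x$. This reduces the geometry to only a small number of cases, and the same reasoning applies with the roles of $x$ and $y$ swapped.

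The second step is to produce the desired shortening. In each configuration isolated above, I would argue that the short diagonal $p_3^y p_4^y$ of $\bar\sigma_y$'s minimal parallelogram $P_y$ (or some other natural rotation of $\bar\sigma_y$ about $y$ that is shorter than $\bar\sigma_y$) is a consistent configuration at $y$. If it is, we immediately contradict the minimality of $\bar\sigma_y$. If it is not consistent, Proposition~\ref{prop:minimal} guarantees a constraint edge $C_y$ spanning $\bar\sigma_y$, with endpoints in $R_1^y\cup R_2^y$ and crossing $p_3^y p_4^y$. The crux is then to use the crossing of $\bar\sigma_x$ and $\bar\sigma_y$, together with the relative arrangement of the cones of $P_y$ and $P_x$, to show that $C_y$ must also intersect $\bar\sigma_x$---which contradicts the fact that $\bar\sigma_x$, being a consistent edge at $x$, cannot cross any constraint.

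The main obstacle, and the bulk of the work, lies in this case analysis: one must check, for each possible position of the endpoints of $\bar\sigma_y$ among $R_1^x,\ldots,R_4^x$, that whenever the short diagonal at $y$ is blocked the spanning constraint $C_y$ is geometrically forced to intersect $\bar\sigma_x$. A useful guiding observation is that, by a symmetric application of Proposition~\ref{prop:excluded}, $\bar\sigma_x$ itself has the form of a potential spanning edge for $P_y$: its endpoints sit in $R_1^y\cup R_2^y\cup R_3^y\cup R_4^y$ and it crosses the diagonal region of $P_y$. This tightly restricts where $C_y$ can be placed without intersecting $\bar\sigma_x$, and in fact rules it out. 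The degenerate sub-cases in which $\bar\sigma_x$ or $\bar\sigma_y$ is a trivial minimum (horizontal, vertical, or unit diagonal) can be disposed of by direct inspection, since such short edges cannot cross any other integer-endpoint segment except at a shared endpoint.
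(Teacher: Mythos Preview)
Your plan has the right ingredients (the excluded region, Proposition~\ref{prop:minimal}, and a contradiction), but it misses the one-line idea that makes the paper's proof short, and in its place it relies on two claims that do not hold.

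First, the claim that the spanning constraint~$C_y$ must intersect~$\bar\sigma_x$ is not true in general. Once you have placed the endpoints of~$\bar\sigma_x$ in the cones of~$\bar\sigma_y$, there are two sub-cases: if they lie in $R_3^y,R_4^y$ then indeed $\bar\sigma_x$ crosses~$C_y$; but if they lie in $R_1^y,R_2^y$ then $\bar\sigma_x$ and~$C_y$ are two segments each running from $R_1^y$ to~$R_2^y$ through the parallelogram, and there is no reason they must meet. So your ``crux'' fails in exactly half the cases, and the case analysis you defer cannot close this gap on its own.

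Second, the handling of the degenerate cases is incorrect: a unit horizontal, vertical, or unit-diagonal edge \emph{can} be crossed by a longer lattice edge (e.g.\ the unit diagonal from $(1,0)$ to $(0,1)$ is crossed by the edge from $(0,0)$ to $(2,1)$). So ``direct inspection'' does not dispose of these cases.

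The paper's argument avoids all of this by extracting a \emph{length inequality} instead of chasing a geometric crossing. Assume w.l.o.g.\ that $x$ is constrained, so by Proposition~\ref{prop:minimal} some constraint~$C$ spans~$\bar\sigma_x$. If $\bar\sigma_y$ crosses~$\bar\sigma_x$, its endpoints lie in the cones $R_i^x$; they cannot be in $R_3^x,R_4^x$ (that would force $\bar\sigma_y$ to cross~$C$), so they lie in $R_1^x,R_2^x$, which immediately gives $|\bar\sigma_y|>|\bar\sigma_x|$. If $y$ is also constrained, swap roles to get $|\bar\sigma_x|>|\bar\sigma_y|$, a contradiction; if $y$ is unconstrained, then $\bar\sigma_y$ is unit-length and cannot be strictly longer than the constrained~$\bar\sigma_x$. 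This symmetry-plus-length trick replaces your entire case analysis and handles the degenerate cases automatically.
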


\begin{pf}
By Proposition~\ref{prop:minimal}, both $\bar\sigma_x$ and $\bar
\sigma
_y$ are
either spanned by a constraint edge or the corresponding midpoints are
unconstrained. If both $x,y$ are unconstrained
then clearly they do not cross. So assume w.l.o.g. that $\bar\sigma
_x$ is
spanned by some constraint edge~$C$. Suppose for contradiction that
$\bar\sigma_y$ crosses~$\bar\sigma_x$. Then $\bar\sigma_y$
must have endpoints either in $R_1,R_2$ or in $R_3,R_4$, where the
$R_i$ are
the regions defined by~$\bar\sigma_x$ as in Figure~\ref{fig:excluded}.
As in the proof of
Proposition~\ref{prop:minimal}, an edge between~$R_3$ and~$R_4$ would
necessarily cross~$C$, so $\bar\sigma_y$ must have endpoints
in~$R_1,R_2$. But clearly this implies that $|\bar\sigma_y|>|\bar
\sigma_x|$.

Now if $y$ is constrained, then switching the roles of~$x$
and $y$ in the above argument yields $|\bar\sigma_x|>|\bar\sigma_y|$,
a contradiction.
And if $y$ is unconstrained, then $\bar\sigma_y$ is horizontal,
vertical or a unit diagonal,
in which case it cannot be strictly longer than the constrained edge
$\bar\sigma_x$,
again a contradiction.
%
\end{pf}

The ground state lemma (Lemma~\ref{lemma:gsl}) now follows immediately from
Propositions~\ref{prop:minimal} and~\ref{prop:intersect}. Given any set
of constraints
$(\eta,\Delta)$, we may thus speak of a minimal length triangulation
$\bar\sigma=\{\bar\sigma_x\}_{x\in\L}$ which is unique except for
possible flipping
of unit diagonals. We call $\bar\sigma$ a \textit{ground state}
triangulation.

\subsection{Flip distance}\label{subsec:flips}
We start with a proof of the claim in Section~\ref{subsec:flipgraph}
that the
flip graph remains connected in the presence of arbitrary constraints.
(Recall that connectedness is easily verified in the absence of constraints.)
We shall also see that the diameter is always small, regardless of the
constraints.

%
\begin{lemma}\label{lem:connected}
For any set of constraints $(\eta,\Delta)$, the flip graph on
triangulations in
$\Omega(\eta,\Delta)$ is connected. Moreover, its diameter is $O(mn(m+n))$.
\end{lemma}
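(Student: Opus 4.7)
The plan is to show that from any $\sigma \in \Omega(\eta,\Delta)$ with $\sigma \neq \bar\sigma$ one can perform a flip inside $\Omega(\eta,\Delta)$ that strictly decreases the total $\ell_1$-length $|\sigma|$. Since $|\sigma|$ is integer-valued and bounded by $(3mn+m+n)(m+n) = O(mn(m+n))$, iterating this brings $\sigma$ to some ground state in $O(mn(m+n))$ flips. By Lemma~\ref{lemma:gsl}, any two ground states of $\Omega(\eta,\Delta)$ differ only in the orientations of unit diagonals at a subset of the at most $mn$ Type~2 midpoints, and are therefore at mutual flip distance at most $mn$. Combining these bounds (and using reversibility of flips) gives both connectedness of the flip graph and the claimed $O(mn(m+n))$ diameter.

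To produce the length-decreasing flip, I take $\sigma_x$ to be an edge of maximum $\ell_1$-length among those satisfying $|\sigma_x| > |\bar\sigma_x|$; such an edge exists because $\sigma \neq \bar\sigma$. Write $p_1,p_2$ for its endpoints and $p_1 p_3 p_2 p_4$ for its minimal parallelogram as in Section~\ref{subsec:minpar}. The two facts to verify are (a) this minimal parallelogram is present in $\sigma$, and (b) the shorter diagonal $p_3 p_4$ does not cross any constraint edge. Granted (a) and (b), flipping $\sigma_x$ to $p_3 p_4$ yields a valid $\sigma' \in \Omega(\eta,\Delta)$ with $|\sigma'| \leq |\sigma| - 1$.

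Fact (b) is immediate from Proposition~\ref{prop:minimal}: if a constraint $C$ crossed $p_3 p_4$ then $C$ would span $\sigma_x$, forcing $\sigma_x = \bar\sigma_x$ and contradicting $|\sigma_x| > |\bar\sigma_x|$. For (a), let $q_1,q_2$ be the third vertices of the two triangles of $\sigma$ containing $\sigma_x$; by Proposition~\ref{prop:closest} each $q_i$ has the form $p_i' + k_i(a,b)$ with $p_1' = p_3$, $p_2' = p_4$, and $(a,b)$ the reduced direction of $\sigma_x$. Suppose for contradiction that $k_1 \neq 0$. A short $\ell_1$-calculation, exploiting the explicit position of $p_3$ relative to $\sigma_x$ furnished by Propositions~\ref{prop:closest} and~\ref{prop:excluded}, gives $\max(|p_1 q_1|,|p_2 q_1|) > |\sigma_x|$. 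The longer of these two edges lies in $\sigma$ and is strictly longer than $\sigma_x$; by the extremal choice of $\sigma_x$ it must be at its own ground-state length, and hence by Proposition~\ref{prop:minimal} is spanned by some constraint edge $C$. The two endpoints of $C$ lie in the outer cones at the two endpoints of this longer edge (in its own minimal parallelogram), which sit on opposite sides of the supporting line of $\sigma_x$, or else one endpoint of $C$ coincides with $p_1$ or $p_2$. Therefore $C$ either crosses $\sigma_x$---impossible, since both $C$ and $\sigma_x$ lie in $\sigma$---or shares an endpoint with $\sigma_x$ and then must cross $p_3 p_4$, making $C$ span $\sigma_x$ and forcing $\sigma_x = \bar\sigma_x$ by Proposition~\ref{prop:minimal}, again a contradiction. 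Symmetrically $k_2 = 0$, so the minimal parallelogram of $\sigma_x$ is present in $\sigma$, proving (a).

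The main obstacle is the final geometric step: certifying that every constraint spanning the hypothetical longer edge must either cross or span $\sigma_x$. This rests on a careful analysis of how the two minimal parallelograms, which share a common vertex, overlap, and on tracking the placement of their excluded regions from Proposition~\ref{prop:excluded} with respect to the supporting line of $\sigma_x$. The underlying elementary $\ell_1$-inequality $\max(|p_1 q_1|,|p_2 q_1|) > |\sigma_x|$ itself also requires a careful sign check based on the location of $p_3$.
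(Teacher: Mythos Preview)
Your overall strategy coincides with the paper's: perform length-decreasing flips until a ground state is reached, then connect ground states via at most $mn$ unit-diagonal flips; the total-length bound $|\sigma|=O(mn(m+n))$ gives the diameter. Choosing directly a maximum-length edge with $|\sigma_x|>|\bar\sigma_x|$, rather than iterating upward from an arbitrary non-ground-state edge as the paper does, is a cosmetic difference; the substantive content is your step~(a).

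For step~(a) your route through the spanning constraint~$C$ is more involved than necessary, and you correctly flag the final overlap analysis as unfinished. The paper avoids tracking~$C$ altogether via the following observation: \emph{in any unimodular triangle, the longest edge spans each of the other two}. Indeed, since the triangle contains no interior lattice points, the longest edge must pass between each shorter edge and that edge's closest lattice point on the far side, and hence crosses that edge's short diagonal (cf.\ Figure~\ref{fig:excluded}). Once this is in hand, the contradiction in your setup is immediate: if the longer neighbor $e=p_i q_1$ were in ground state, one may regard $e$ as an additional constraint without changing~$\bar\sigma$ (by the Ground State Lemma), and since $e$ spans $\sigma_x$, Proposition~\ref{prop:minimal} forces $\sigma_x=\bar\sigma_x$. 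This single ``longest-edge-spans-the-rest'' observation replaces your proposed analysis of how the two minimal parallelograms overlap and where the endpoints of~$C$ sit relative to the supporting line of~$\sigma_x$.
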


\begin{pf}
By the ground state lemma (Lemma~\ref{lemma:gsl}), there is a
well-defined family of ground state triangulations for any such $(\eta,\Delta)$,
which differ only up to flipping of unit diagonals. As in the
unconstrained case,
it suffices to show that a ground state is reachable from any triangulation
in~$\Omega(\eta,\Delta)$, since by reversibility this implies that any
triangulation
is reachable from a ground state, and ground states are obviously reachable
from each other.
Since in any ground state triangulation all edges have minimal possible length,
it suffices in turn to prove that, in any nonground-state
triangulation, there is
an edge that is flippable to a shorter edge. This we now do.

Let $\sigma\in\Omega(\eta,\Delta)$ be an arbitrary nonground-state
triangulation,
and let $\sigma_x$ be some edge of~$\sigma$ that is not in ground
state. If
$\sigma_x$ is flippable to a shorter edge, we are done; if not, then
$\sigma_x$
cannot be the longest edge in both of its triangles, so we can find a
\textit{longer}
edge, $\sigma_y$, that shares an endpoint with~$\sigma_x$. The crucial
observation
(see the next paragraph) is that: 
(i) $\s_y$ is not a constraint edge; and (ii) $\sigma_y$ is not in
ground state. Thus
we can iterate this process, finding a connected sequence of edges of increasing
length that are not in ground state, until we find one that is longest
in both its triangles.
This edge must be flippable to a shorter edge, and we are done.

To conclude, we verify the above claim about $\sigma_y$.
This follows from the fact that, in any triangle, if the longest edge
is in ground state, including the case where it is a constraint edge,
then so are the other two edges. To see this, first note that by Lemma~\ref{lemma:gsl} one can assume without loss of generality that the
longest edge
in the triangle is a constraint edge. Then the claim follows from
Proposition~\ref{prop:minimal} and the fact that
the longest edge in
a triangle spans the other two edges; this latter fact holds because
the triangle contains
no lattice points, so the longest edge must pass between each of the
other edges and
its closest lattice point on the same side of the edge; see Figure~\ref
{fig:excluded}.

Finally, the assertion concerning the diameter follows immediately from
the fact that
the maximum length of a triangulation is $O(mn(m+n))$, which is clearly
an upper bound
on the number of edge-shortening flips needed to reach a ground state
triangulation.
\end{pf}

Next, we shall compute the \textit{flip distance} between two arbitrary
triangulations
in the flip graph, that is, the minimal number of flips required to
obtain one triangulation
from the other. As usual, fix a set of constraints $(\eta,\D)$.
Given a midpoint~$x$, let $\O_x$ denote the set of all possible values
of the edge
at~$x$ that are consistent with the constraints. This set contains the ground
state~$\bar\si_x$ (or, in the case where~$x$ is unconstrained and of
Type~2, the
pair of ground states corresponding to two opposite unit diagonals).
Say that two elements $\si_x,\si'_x\in\O_x$ are \textit{neighbors},
written $\si_x\sim\si'_x$,
if $\si_x$ is flippable to $\si'_x$ within a valid triangulation $\si
\in
\O(\eta,\D)$.
Recall that for any $\si_x\in\O_x$ there is at most one $\sigma
'_x\sim
\sigma_x$
such that 
$|\si_x|\geq|\si'_x|$. If $\si_x$ is not in ground state, then an edge
$\sigma'_x\in\O_x$
with $\sigma'_x\sim\sigma_x$ and $|\si_x|> |\si'_x|$ necessarily exists.
This follows from the process described in the proof of Lemma~\ref
{lem:connected},
since~$\sigma_x$ must eventually be flippable to a shorter edge.
These observations prove that the resulting graph with vertex set $\O
_x$ is a tree
rooted at the ground state~$\bar\sigma_x$ (or, when the ground state is
not unique,
at a pair of neighboring ground states corresponding to unit diagonals).
Given $\si_x,\tau_x\in\O_x$, let $\k(\si_x,\tau_x)$ be the distance
between $\si_x,\tau_x$
in the tree, that is, the minimal number of flips of the edge at~$x$
required to change its
configuration from~$\sigma_x$ to~$\tau_x$ (regardless of the
disposition of the other edges).
Notice that any path between $\si$ and $\tau$ in the flip graph must
contain all of
the above flips for every~$x$, and thus have length at least $\sum_{x}\k
(\si_x,\tau_x)$.
We call these the \textit{indispensable} flips for the pair $\si,\tau$.

The next result expresses a fundamental structural property of the flip
graph: there
is a path between any two triangulations that consists only of
indispensable flips.
Note that this path always exists, independent of the constraints.
Although we shall not
require this result in our subsequent analysis, we include it here for
completeness.

%
\begin{proposition}\label{prop:flipdist}
The flip distance between any two triangulations $\si,\tau\in\O
(\eta,\D
)$ is equal to
$\sum_{x}\k(\si_x,\tau_x)$.
\end{proposition}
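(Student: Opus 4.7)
The inequality $\mathrm{flipdist}(\si,\tau) \geq \sum_x \k(\si_x,\tau_x)$ is straightforward: projecting any flip sequence from $\si$ to $\tau$ onto a single midpoint $x$ yields a walk in the tree $\O_x$ from $\si_x$ to $\tau_x$ of length at least $\k(\si_x,\tau_x)$; since each flip changes the edge at exactly one midpoint, summing over $x$ gives the claimed bound.

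For the matching upper bound, I would induct on $N := \sum_x \k(\si_x,\tau_x)$. The base case $N=0$ forces $\si=\tau$. For the inductive step, the plan is to exhibit a single flip $\si \to \si'$ decreasing $N$ by exactly one --- an \emph{indispensable} flip, meaning one that changes the edge at a single midpoint $x^*$ by advancing it one step along the tree path in $\O_{x^*}$ from $\si_{x^*}$ to $\tau_{x^*}$. The inductive hypothesis applied to $\si'$ then supplies a flip sequence of length $N-1$, for a total of $N$.

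To locate such a flip I would take $x^*$ to be the midpoint of a longest edge in the symmetric difference $\si \triangle \tau$. WLOG this edge is $\si_{x^*} \in \si \setminus \tau$, so $|\si_{x^*}| \geq |\tau_{x^*}|$; except in the degenerate case where both $\si_{x^*}$ and $\tau_{x^*}$ are the two ground-state unit diagonals at an unconstrained Type~2 midpoint (which is treated directly by checking that the bounding unit square is realized in $\si$), the first step of the tree path from $\si_{x^*}$ toward $\tau_{x^*}$ is the edge-shortening flip to the parent of $\si_{x^*}$, realized by flipping the minimal parallelogram of $\si_{x^*}$. The main content of the inductive step is therefore to verify that this minimal parallelogram is realized in $\si$.

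The verification mirrors the strategy in the proof of Lemma~\ref{lem:connected}. If the minimal parallelogram of $\si_{x^*}$ is not realized, then $\si_{x^*}$ fails to be a longest edge in at least one of its two triangles in $\si$, so there is an edge $\si_y$ of $\si$ sharing a vertex with $\si_{x^*}$ and satisfying $|\si_y| > |\si_{x^*}|$. By the maximality of $|\si_{x^*}|$ in $\si\triangle\tau$, such $\si_y$ must lie in $\si\cap\tau$, hence also in $\tau$. The remaining step is to convert the coexistence of $\si_y$ in $\tau$ with the \emph{absence} of $\si_{x^*}$ in $\tau$ into a geometric contradiction, using the excluded-region structure of $\si_{x^*}$'s minimal parallelogram (Proposition~\ref{prop:excluded}) together with the non-crossing principle of Proposition~\ref{prop:intersect}: any edge of $\tau$ completing the vertex fan around the shared endpoint of $\si_{x^*}$ and $\si_y$ in place of $\si_{x^*}$ is forced into a region that must cross $\si_y$. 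The main obstacle I anticipate is making this final geometric step fully rigorous --- particularly in the constrained setting and close to the boundary --- and it will likely require a case analysis based on the position of $\si_y$ relative to the four cones $R_1,\dots,R_4$ of Figure~\ref{fig:excluded}. Once flippability is established, the flip reduces $\k(\si_{x^*},\tau_{x^*})$ by one and leaves all other $\k(\si_y,\tau_y)$ unchanged, so $\sum_x \k(\si'_x,\tau_x) = N-1$ and the induction closes.
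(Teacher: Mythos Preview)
Your overall strategy is correct and matches the paper's: pick a longest edge in the symmetric difference and show it admits an indispensable (shortening) flip, then iterate. The lower bound and the inductive framework are exactly right.

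Where you diverge is in the closure of the key step, and you are making it harder than it needs to be. You correctly arrive at the situation where $\si_{x^*}$ is not flippable to a shorter edge, so some neighboring edge $\si_y$ in a triangle of $\si$ containing $\si_{x^*}$ satisfies $|\si_y|>|\si_{x^*}|$, and by maximality $\si_y\in\si\cap\tau$. At this point you propose a direct geometric contradiction via the cones $R_1,\dots,R_4$ and anticipate a case analysis. The paper avoids this entirely with one clean device: regard $\D':=\{z:\si_z=\tau_z\}$ as an \emph{enlarged} set of constraints. Both $\si_{x^*}$ and $\tau_{x^*}$ are consistent with $(\D',\si)$, and since $|\tau_{x^*}|<|\si_{x^*}|$ (the non--unit-diagonal case), $\si_{x^*}$ is not in ground state relative to these constraints. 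Now invoke the fact already proved inside Lemma~\ref{lem:connected}: if the longest edge of a triangle is in ground state then so are the other two (because the longest edge spans the others). Contrapositively, since $\si_{x^*}$ is not in ground state and $\si_y$ is the longest edge of that triangle, $\si_y$ is not in ground state relative to $\D'$ either; but every edge in $\D'$ is a constraint and hence trivially in ground state, so $y\notin\D'$, contradicting maximality. Equivalently, a single application of Proposition~\ref{prop:minimal} does the job: $\si_y$ spans $\si_{x^*}$, so with $\si_y$ as a constraint $\si_{x^*}$ is the \emph{unique} minimal configuration at $x^*$, contradicting the existence of the shorter $\tau_{x^*}$ consistent with $\si_y$. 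No cone-by-cone analysis is required, and the argument is insensitive to constraints and boundary.

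The same constraint trick also dispatches the unit-diagonal case you gloss over: any edge of $\si$ obstructing the unit square at $x^*$ is strictly longer than $|\si_{x^*}|$, hence (by maximality) lies in $\D'$ and is present in $\tau$; but then it would witness $x^*$ as constrained relative to $\D'$, contradicting that both opposite unit diagonals $\si_{x^*},\tau_{x^*}$ are consistent with $\D'$.
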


\begin{pf}
Given any pair of triangulations $\si,\tau\in\O(\eta,\D)$ with
$\si\ne
\tau$, we show
how to perform an indispensable flip in either $\si$ or~$\tau$ to
produce a new
pair $\si',\tau'$. Since the set of indispensable flips for $\si
',\tau
'$ differs from that
for $\si,\tau$ only in the single flip just performed, iteration of
this procedure
until $\si=\tau$ produces a path of indispensable flips between~$\si$
and~$\tau$.

To justify the above claim, let $\D'=\{x\in\L\dvtx \si_x=\tau_x\}$;
note that $\D'$ contains~$\D$, and $(\D,\eta)=(\D,\si)$.
We will view $(\D',\si)$ as an expanded set of constraints.
Let $x_*$ denote a point $x\in\L\setminus\D'$ such that
\[
|\si_{x_*}|=\max \bigl\{|\si_x|\dvtx x\in\L\setminus
\D' \bigr\}.
\]
Similarly, let $y_*$ denote a maximal edge in $\tau$ outside~$\D'$.
Assume w.l.o.g. that $|\si_{x_*}| \ge|\tau_{y_*}|$ (else the same argument
follows with the roles of $\si_{x_*}$ and $\tau_{y_*}$ interchanged).
We claim that an indispensable flip can be performed on~$\sigma_{x_*}$.

Note first that $|\si_{x_*}| \ge|\tau_{y_*}| \ge|\tau_{x_*}|$, so
since ground states
are minimal and $\si_{x_*}\ne\tau_{x_*}$, the only way
$\si_{x_*}$ can be in ground state is if $|\si_{x_*}| = |\tau_{x_*}|$
and $\si_{x_*},\tau_{x_*}$
are opposite unit diagonals; in this case an indispensable flip of~$\si
_{x_*}$ must
be possible (since it could be prevented only by a longer
nonconstraint edge, and
$\si_{x_*}$ is assumed to be maximal)
and takes us directly to~$\tau_{x_*}$. Otherwise, $\si_{x_*}$ is not in
ground state,
so from the proof of Lemma~\ref{lem:connected} since $\si_{x_*}$ is a longest
such edge it is flippable to a shorter edge.
To see that this flip is indispensable, note that $\si_{x_*}\ne\tau
_{x_*}$ and
$|\si_{x_*}| \ge|\tau_{x_*}|$, so the path from $\si_{x_*}$ to
$\tau
_{x_*}$ in
the tree~$\O_x$ must include the shortening flip at~$\si_{x_*}$.
This completes the proof.
\end{pf}

%
\subsection{The influence region}\label{subsec:upsilon}
The following notion of ``influence region'' of an edge will play a key
role in our analysis of
decay of correlations under the Gibbs distribution for small~$\lambda$.
As usual we fix an arbitrary set of constraints $(\eta,\D)$.
Let $\bar\sigma$ be a ground state triangulation compatible with the
constraints as in
Lemma~\ref{lemma:gsl}.
Suppose we change the configuration of the edge at some point~$x$ to a
nonground-state
value $\sigma_x\ne\bar\sigma_x$ (still consistent with the
constraints). We would like
to identify the minimal region ``affected by''
the new edge~$\sigma_x$. We call this region the \textit{influence region} of~$\sigma_x$,
and denote it~$\Upsilon(\sigma_x)$.

To define $\Upsilon$ formally, we use the notion of a \textit{ground
state region},
defined as any (not necessarily connected) region of~$\rset^2$ bounded
by ground
state edges.
Then $\Upsilon(\sigma_x)$ is defined as the smallest ground state
region containing~$\sigma_x$; see Figure~\ref{fig:upsilon}.
When $\sigma_x=\bar\sigma_x$, we define $\Upsilon(\sigma_x)$ to be empty.
Observe that $\Upsilon(\sigma_x)$ is a connected ground state region.
Moreover,
since the boundary of~$\Upsilon(\sigma_x)$ must consist of ground
state edges
that do not cross~$\si_x$, the presence of $\sigma_x$ has no effect on
the ground state
outside this region.

\begin{figure}

\includegraphics{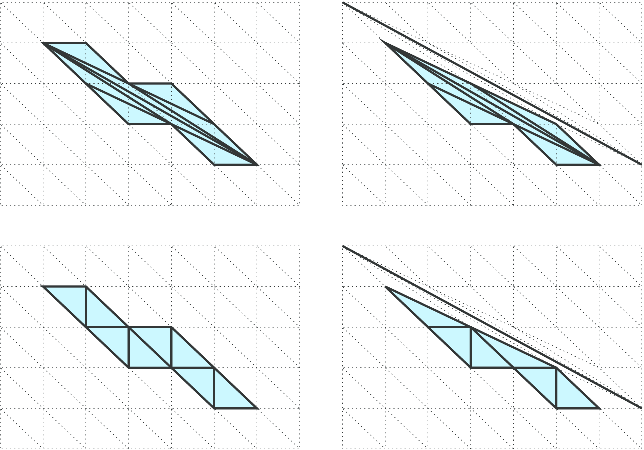}

\caption{The region $\Upsilon(\si_x)$ when $\si_x$ is an edge of slope
$(-3,5)$ (with midpoint in the center of the box).
Left: in the case of no constraints. Right:
in the presence of a constraint edge of slope $(-4,7)$.
Above: the triangles inside correspond to the second definition of the
influence region. Below: the triangles inside
correspond to the first definition of the influence region. In the
background, a ground state triangulation (dotted lines).}
\label{fig:upsilon}
\end{figure}

%

Next, we discuss an alternative construction of the influence region,
and then prove that it coincides with $\Upsilon(\sigma_x)$; see Lemma~\ref{ups*}.
In the setting in the first paragraph above, let $\bar\sigma(\sigma_x)$
denote the new ground state triangulation when the edge~$\sigma_x$ is
added as an
additional constraint. Consider the region $\Upsilon^*(\si
_x)\subseteq
\bbR^2$ defined
by successive addition of triangles as follows. Let $y_1,y_2$ and
$z_1,z_2$ denote the
midpoints of the edges of the minimal parallelogram of~$\sigma_x$, so
that this parallelogram
is made up of the two triangles $T_1=(\si_x,\bar\si_{y_1}(\si
_x),\bar\si
_{y_2}(\si_x))$ and
$T_2=(\si_x,\bar\si_{z_1}(\si_x),\bar\si_{z_2}(\si_x))$. Note that
these triangles are
consistent with the constraints: indeed, if one of the constraint
edges~$C$ were
to cross an edge of one of these triangles, then $C$ would span $\si
_x$; and by
Proposition~\ref{prop:minimal} this would imply that $\si_x$ is
minimal, contradicting
the assumption that $\si_x\neq\bar\si_x$. Thus, we can add the
triangles $T_1$ and $T_2$.
Next, we add a further triangle for each edge $\bar\si_{y_i}(\si_x)$,
$i=1,2$, such
that $\bar\si_{y_i}(\si_x)\neq\bar\si_{y_i}$ and for each edge
$\bar\si
_{z_i}(\si_x)$, $i=1,2$,
such that $\bar\si_{z_i}(\si_x)\neq\bar\si_{z_i}$. For edges that are
already in their
original ground state, we add no further triangle. More precisely,
if $\bar\si_{y_1}(\si_x)\neq\bar\si_{y_1}$, then define midpoints
$y_{11}$, $y_{12}$ and the triangle
$T_{11}=(\bar\si_{y_1}(\si_x),\bar\si_{y_{11}}(\si_x),\bar\si
_{y_{12}}(\si_x))$,
which is one half of the minimal parallelogram of~$\bar\si_{y_1}(\si_x)$.
The same reasoning as above shows that this triangle is consistent with the
constraints as long as $\bar\si_{y_1}(\si_x)\neq\bar\si_{y_1}$.
Similarly, define $y_{21},y_{22}$ and $T_{12}=(\bar\si_{y_2}(\si
_x),\bar
\si_{y_{21}}(\si_x),\bar\si_{y_{22}}(\si_x))$, which is one half
of the
minimal parallelogram of $\bar\si_{y_2}(\si_x)$.
The same construction can be applied to the other side of $\si_x$ to
define the points $z_{11},z_{12},z_{21},z_{22}$ and the associated
triangles $T_{21},T_{22}$.
This branching procedure is iterated until one of the midpoints, say $w$,
is such that $\bar\si_w(\si_x)=\bar\si_w$. 
In this case that branch is stopped, and we continue with the other
available branches, if any.
$\Upsilon^*(\si_x)$~is defined as the union of all triangles added in
this way.

Notice that all triangles added contain at least one edge that is not
in ground state.
However, the boundary of $\Upsilon^*(\si_x)$ is made up only of ground
state edges.
Therefore, $\Upsilon^*(\si_x)$ is a ground state region, and as such it
is also a union of ground state triangles.
(See Figure~\ref{fig:upsilon}.)

\begin{lemma}\label{ups*}
For any point~$x$ and any value of the edge $\si_x$, one has $\Upsilon
^*(\si_x)=\Upsilon(\si_x)$.
\end{lemma}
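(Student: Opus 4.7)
The plan is to establish both inclusions $\Upsilon\subseteq\Upsilon^*$ and $\Upsilon^*\subseteq\Upsilon$; the case $\si_x=\bar\si_x$ is trivial, so we assume $\si_x\neq\bar\si_x$. The central tool is the Ground State Lemma~\ref{lemma:gsl}, which yields the following preliminary identity that I will use throughout: for every midpoint~$y$,
\[
\bar\si_y(\si_x)=\bar\si_y \quad\Longleftrightarrow\quad \bar\si_y \text{ does not cross } \si_x.
\]
If $\bar\si_y$ does not cross $\si_x$, then it is consistent with the augmented constraints $(\eta\cup\{\si_x\},\D\cup\{x\})$; being minimal for the original constraints, it is also minimal for the augmented ones, so by the Ground State Lemma it coincides with $\bar\si_y(\si_x)$. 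Conversely, if $\bar\si_y$ crosses $\si_x$ then it violates the augmented constraints and so $\bar\si_y\neq\bar\si_y(\si_x)$. (The usual unit-diagonal ambiguity is handled by choosing $\bar\si$ to coincide with $\bar\si(\si_x)$ wherever both admit the same minimal configurations.)

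For the first inclusion $\Upsilon\subseteq\Upsilon^*$, the termination rule of the branching construction ensures that every boundary edge $e$ of $\Upsilon^*$ satisfies $\bar\si_e(\si_x)=\bar\si_e$, hence is a ground state edge of $\bar\si$. Now Proposition~\ref{prop:intersect} (non-crossing of ground state edges) implies that any $\bar\si$-triangle whose interior meets $\Upsilon^*$ must lie entirely inside $\Upsilon^*$: otherwise one of its ground state edges would have to cross a ground state boundary edge of $\Upsilon^*$. Therefore $\Upsilon^*$ is itself a union of $\bar\si$-triangles, i.e., a region made up of ground state triangles. Since $\si_x\subseteq T_1\cup T_2\subseteq\Upsilon^*$, the minimality of $\Upsilon$ yields the claimed inclusion.

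For the reverse inclusion $\Upsilon^*\subseteq\Upsilon$, I argue that every $\bar\si$-triangle $T$ tiling $\Upsilon^*$ is crossed by $\si_x$, and hence belongs to $\Upsilon$. Since $\si_x$ is not a ground state edge, it cannot be an edge of any ground state triangle and therefore cannot fit inside a single such triangle; thus $\Upsilon^*$ contains at least two $\bar\si$-triangles, and by connectedness of $\Upsilon^*$ (clear from its inductive construction) $T$ must share some edge $e=\bar\si_y$ with another $\bar\si$-triangle $T'\subseteq\Upsilon^*$. This edge $e$ lies in the interior of $\Upsilon^*$. I claim $\bar\si_y\neq\bar\si_y(\si_x)$: if they were equal, then $e$ would also be an interior edge in $\bar\si(\si_x)$'s tiling of $\Upsilon^*$ (both its adjacent half-planes lie inside $\Upsilon^*$), but by the branching construction every interior edge in $\bar\si(\si_x)$'s tiling of $\Upsilon^*$ is a branching-tree edge and hence satisfies $\bar\si_y(\si_x)\neq\bar\si_y$, a contradiction. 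The preliminary identity then gives that $\bar\si_y$ crosses $\si_x$, so $T$ is crossed by $\si_x$ through $e$, and $T\subseteq\Upsilon$.

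The main obstacle is the preliminary identity, both in carefully managing the unit-diagonal ambiguity in ground states and in the subsequent contradiction step, which relies on correctly matching the two different tilings of $\Upsilon^*$ (by $\bar\si$ and by $\bar\si(\si_x)$) on a common interior edge; the branching construction must genuinely capture every difference between $\bar\si$ and $\bar\si(\si_x)$ inside $\Upsilon^*$ for this matching to go through.
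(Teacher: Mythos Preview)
Your proof is correct and rests on the same two facts as the paper's: (i) the preliminary identity $\bar\si_y(\si_x)=\bar\si_y$ if and only if $\bar\si_y$ does not cross $\si_x$, and (ii) every interior midpoint of $\Upsilon^*$ satisfies $\bar\si_y(\si_x)\neq\bar\si_y$ (the branching halts only at the boundary). The paper packages the reverse inclusion slightly differently: rather than showing directly that every $\bar\si$-triangle in $\Upsilon^*$ is crossed by $\si_x$, it supposes $\Upsilon\subsetneq\Upsilon^*$ and picks a boundary edge $\bar\si_z$ of $\Upsilon$ whose midpoint $z$ lies in the interior of $\Upsilon^*$; since $\si_x\subseteq\Upsilon$, this edge cannot cross $\si_x$, whence $\bar\si_z=\bar\si_z(\si_x)$ by (i), contradicting (ii). This sidesteps your step of finding an interior $\bar\si$-edge for \emph{each} triangle (which implicitly uses that the interior of $\Upsilon^*$ is connected, itself a consequence of the edge-by-edge branching construction), but trades it for an equally mild topological fact about nested triangulated regions. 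The two arguments are essentially interchangeable.
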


\begin{pf}
We can assume $\si_x\neq\bar\si_x$, since otherwise $\Upsilon
^*(\si
_x)=\Upsilon(\si_x)=\varnothing$.
Since $\Upsilon^*(\si_x)$ is
a ground state region
that contains $\si_x$, we have
$\Upsilon(\si_x)\subseteq\Upsilon^*(\si_x)$. Next, suppose for
contradiction that $\Upsilon(\si_x)$
is strictly contained in $\Upsilon^*(\si_x)$. Then there must be a
point~$z$ such that
$\bar\si_z$ is a boundary edge of $\Upsilon(\si_x)$ but $z$ is an
interior point
of $\Upsilon^*(\si_x)$. This implies that $z$ can achieve its ground
state value $\bar\si_z$
even in the presence of $\si_x$, but the fact that $z$ is an interior
point of $\Upsilon^*(\si_x)$
implies that $\bar\si_z(\si_x)\neq\bar\si_z$; otherwise the branching
procedure defining
$\Upsilon^*(\si_x)$ would have stopped at $z$, which implies that $z$
is not
an interior point. (Note that the triangles added by the branching
procedure form a tree,
so in this case $\sigma_z$ must indeed lie on the boundary.)
Thus, such a $z$ cannot exist.
\end{pf}

%

\section{Spatial mixing for small \texorpdfstring{$\lambda$}{$lambda$}}\label{sec:equilibrium}
%

In this section we prove some fundamental properties of the Gibbs
distribution~$\mu$
for sufficiently small values $\lambda<1$. Since long edges are
penalized in this regime,
one might expect
that the probability that an edge is longer than its ground state value
should decrease
exponentially in this excess length. We prove this property in
Corollary~\ref{cor1} below,
as a consequence of a rather more general exponential tail property
(Lemma~\ref{expdecS}).
We then go on to establish exponential decay of correlations (or \textit{
spatial mixing}) in the
same regime (see Theorem~\ref{covdecga}), as advertised in
Theorem~\ref
{thm:zero} in
the \hyperref[sec:intro]{Introduction}. It is worth emphasizing that all these properties are
shown to hold
uniformly in the constraints $(\eta,\D)$, and thus in particular for
triangulations of arbitrary
lattice polygons.

We begin with some additional terminology. Fix a set of constraint edges.
For any triangulation~$\sigma$ consistent with the constraints, we call
a triangle
in~$\sigma$ a \textit{G-triangle} if all of its edges are in ground
state (with respect to the
constraints), and a \textit{B-triangle} otherwise.
Two triangles are
\textit{connected} if they share an edge, and \textit{$*$-connected} if
they share a vertex.
Note that every maximal connected region of B-triangles is a
ground state region
(see Figure~\ref{fig:trfig3}), and thus the configuration inside
it can always be replaced by a ground state without affecting the rest
of the triangulation.
We shall use this property crucially in our arguments below.

\begin{figure}

\includegraphics{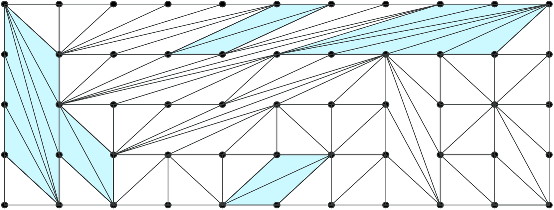}

\caption{The triangulation with constraints of Figure~\protect\ref
{fig:trfig1}, showing its
four connected regions of B-triangles (shaded). All unshaded triangles
are G-triangles. Compare also with Figure~\protect\ref{fig:trfig1gs}.}
\label{fig:trfig3}
\end{figure}

%

We denote by $\cS=\cS(\si)$ the ground state region consisting of
all B-triangles in~$\si$.
The set $\cS$ naturally describes the deviation of~$\sigma$ from
the ground state $\bar\si$.
For any midpoint $x$, we denote by $\cS_x=\cS_x(\si)$ the maximal
connected component of B-triangles containing the edge at~$x$
in $\si$ [so $\cS_x(\sigma)=\varnothing$ if and only if
$\sigma_x$ is contained in two G-triangles].
Notice that $\cS_x$ is a ground state region and that $\cS(\si
)=\bigcup_x S_x(\si)$ for all $\si$.

The next lemma contains the key Peierls-type estimate that we need. As
in \eqref{eq:gibbs}, $\mu$ stands for the Gibbs distribution on
triangulations with parameter~$\lambda$.

%
\begin{lemma}\label{expdecS0}
For all sets of constraints, all points $x\in\L$, all $\lambda>0$
and all ground state regions $S$,
%
\begin{equation}
\label{pereaso} \mu (\cS_x=S )\leq(8\lambda)^{|S|/2}.
\end{equation}
\end{lemma}

\begin{pf}
For any ground state region~$S$,
let $\midpoints(S)$ denote the set of interior midpoints of~$S$, namely
the collection
of $z\in\midpoints\cap S$ that are not on the boundary of~$S$. Notice
that any triangulation~$\t$
of the region~$S$ is obtained by specifying the edges $\t_z$, $z\in
\midpoints(S)$.
We let $\O(S)$ denote the set of all such triangulations, and $\O^*(S)$
the subset consisting of $\t\in\O(S)$ such that every triangle in
$\t$
has at least one edge that is not in ground state. For a triangulation
$\tau\in\O(S)$, we define
%
\begin{equation}
\label{fissi} \Phi(S,\t):= 
\sum_{z\in\midpoints(S)}|
\t_z|;
\end{equation}
note that the boundary edges are omitted from this sum.
We let $Z$ as usual~denote the partition function, that is, $Z=\sum_{\si
\in\O}\lambda^{|\si|}$, and
$Z_S=\break \sum_{\t\in\O(S)}\lambda^{\Phi(S,\t)}$ the restricted partition
function within~$S$.
Clearly, for any ground state region~$S$, we have the bound
\[
Z\geq\lambda^{|\partial S|}Z_S Z_{S^c},
\]
where $|\partial S|$ is the length of the boundary of~$S$. Notice that
if $\cS_x(\si)=S$, then $\si$ contains the edges of the boundary
$\partial S$ and the restriction of $\si$ to $\L(S)$ must be given by
some $\t\in\O^*(S)$. Therefore,
%
\begin{eqnarray}
\label{sxse}
\mu(\cS_x=S) &=&\frac{\sum_{\si}\lambda^{|\si|} \mathbf{1}({\cS_x(\si)=S})}{Z}
\nonumber\\
&\leq&\frac{ \lambda^{|\partial S|}Z_{S^c}\sum_{\t\in\O
^*(S)}\lambda^{\Phi(S,\t)}
}{Z}
\\
&\leq&\frac{\sum_{\t\in\O^*(S)}\lambda^{\Phi(S,\t)}}{Z_S}.\nonumber
\end{eqnarray}
%
Next, we can use the trivial bound $Z_S\geq\lambda^{\Phi(S,\bar\t
)}$, where
$\bar\t$ denotes
the ground state triangulation inside~$S$, and the fact that $\t\in\O
^*(S)$ implies
$\Phi(S,\t) - \Phi(S,\bar\t) \geq\frac{1}2 |S|$ (because each of the
$|S|$ triangles
in~$\tau$ contains at least one edge not in ground state, and each edge
is in two triangles).
Hence, for $\lambda\leq1$,
\[
\mu(\cS_x=S)\leq\sum_{\t\in\O(S)}
\lambda^{({1}/2) |S|}.
\]
Finally, from Anclin's argument~\cite{Anclin} mentioned in
Section~\ref{subsec:related},
there are at most $2^{3|S|/2}$ triangulations
of a region $S$ since the number of interior edges is at most $3|S|/2$.
This proves \eqref{pereaso}.
\end{pf}

We turn now to a second preliminary estimate.
Let $\Phi(S,\si)$ be defined as in \eqref{fissi}, and
consider the quantity
%
\begin{equation}
\label{phix} \phi_x(\si):=\Phi \bigl(\cS_x(\si),\si
\bigr)-\Phi \bigl(\cS_x(\si),\bar\si \bigr),
\end{equation}
measuring the deviation of the total length of the
triangulation~$\sigma$
in the region around~$x$ with respect to the ground state.
The next lemma shows that, for sufficiently small~$\lambda$, $\phi_x$
has exponentially decaying tail probability.

%
\begin{lemma}\label{expdecS}
There exists $\lambda_0\in(0,1)$ such that, for all sets of
constraints, all
points $x\in\L$, all $\lambda>0$
and all $k\in\bbN$,
%
\begin{equation}
\label{expdecS1} \mu \bigl(\phi_x(\sigma)=k \bigr)\leq(\lambda/
\lambda_0)^{k.} 
\end{equation}
\end{lemma}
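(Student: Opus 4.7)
The plan is a Peierls-style contour argument that exploits the Ground State Lemma (Lemma~\ref{lemma:gsl}) to map each configuration $\sigma$ with $\phi_x(\sigma)=k$ to a configuration of strictly larger weight, and to bound combinatorially the number of preimages of any given target. Fix the constraints $(\eta,\Delta)$ and a reference ground state $\bar\sigma$. Given any $\sigma$ with $\phi_x(\sigma)=k$, set $S:=S_x(\sigma)$; by maximality of this connected component, every triangle outside $S$ adjacent to $S$ is a G-triangle, so the boundary of $S$ consists entirely of ground-state edges. Define $\pi(\sigma)$ to be the configuration obtained from $\sigma$ by replacing its restriction to the interior midpoints $\Lambda(S)$ with $\bar\sigma|_{\Lambda(S)}$. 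The Ground State Lemma guarantees $\pi(\sigma)\in\Omega(\eta,\Delta)$, and by construction $|\pi(\sigma)|=|\sigma|-k$, so that
\[
\mu(\phi_x=k)\;=\;\lambda^k\sum_{\tau\in\Omega(\eta,\Delta)}N(\tau)\,\mu(\tau),\qquad N(\tau):=\bigl|\pi^{-1}(\tau)\cap\{\phi_x=k\}\bigr|.
\]
If I can establish $N(\tau)\leq C^k$ uniformly in $\tau$ and in the constraints, then $\mu(\phi_x=k)\leq(C\lambda)^k$ and the lemma follows with $\lambda_0:=1/C$.

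The core task is therefore the uniform bound $N(\tau)\leq C^k$. A preimage of $\tau$ is determined by a pair $(S,\sigma|_S)$, where $S$ is a connected B-triangle region containing the edge at $x$ whose boundary consists of ground-state edges of $\tau$, and $\sigma|_S$ is a triangulation of $S$ compatible with that boundary, all of whose triangles are B-triangles and whose total length exceeds $\Phi(S,\bar\sigma)$ by exactly $k$. I plan to count such pairs via two estimates. First, by the parity of admissible edge lengths at each midpoint, every non-ground-state edge has excess at least one, and every B-triangle contributes at least one non-ground-state interior edge, each of which is shared by at most two B-triangles; hence the number of triangles in $S$ is $O(k)$ and the total $\ell_1$-length of the boundary of $S$ is $O(k)$. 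A standard contour-counting argument then bounds the number of admissible shapes $S$ passing through $x$ by $C_1^k$. Second, for each fixed $S$ with $O(k)$ lattice points, an Anclin-style ``two choices per edge'' bound (cf.\ Section~\ref{subsec:related}) gives at most $C_2^k$ triangulations of $S$ compatible with its boundary. Setting $C=C_1C_2$ yields the required bound.

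The principal obstacle lies in the shape-counting step: unlike in a classical Peierls argument, the boundary of $S$ may consist of several closed components (if $S$ contains internal G-triangle ``holes''), and individual boundary edges need not be unit-length horizontal, vertical, or diagonal segments but may be long ground-state edges forced by spanning constraints. These features can be handled by enumerating the outer boundary together with each hole separately, and by invoking Proposition~\ref{prop:minimal}, which shows that constrained ground-state edges are uniquely determined by the spanning constraints, in order to absorb the constraint-dependent contribution into the counting constant $C_1$. Once this bookkeeping is in place the remaining estimates are routine.
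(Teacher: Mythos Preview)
Your approach is essentially the same as the paper's: both reduce to the estimate ``number of regions $S$ of size $|S|$'' $\times$ ``number of triangulations of $S$'' $\le C^{|S|}$, combined with the observation that $|S|\le 2k$ (every triangle of $S$ has a non-ground-state interior edge, each shared by at most two triangles).  The paper phrases the comparison via partition-function factorization $Z\ge\lambda^{|\partial S|}Z_SZ_{S^c}$ and $Z_S\ge\lambda^{\Phi(S,\bar\sigma)}$ rather than through an explicit map~$\pi$, but the two formulations are equivalent.

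The one place you make life harder than necessary is the shape-counting step.  You propose to enumerate the \emph{boundary} of $S$ as a contour, which---as you note---runs into complications from long constraint-forced ground-state edges and internal holes.  The paper sidesteps this entirely by counting $S$ directly as a connected union of ground-state triangles: in any triangulation each triangle has at most three neighbors (sharing an edge), so the number of connected sets of $\ell$ triangles containing a fixed triangle is at most $9^\ell$, uniformly in the constraints.  This avoids all the bookkeeping in your final paragraph.  With that simplification your argument goes through exactly as the paper's.
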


\begin{pf}
We write
\begin{eqnarray*}
\mu \bigl(\phi_x(\sigma)=k \bigr)&=\sum
_{S} \mu \bigl(\phi_x(\si)=k;
\cS_x(\si)=S \bigr).
\end{eqnarray*}
As in \eqref{sxse}, we have
\begin{eqnarray*}
&&\mu \bigl(\phi_x(\si)=k; \cS_x(\si)=S \bigr) \\
&&\qquad\leq
\frac{\sum_{\t\in\O^*(S)}\lambda^{\Phi(S,\t)}\mathbf{1}({\Phi(S,\t)=\Phi
(S,\bar\t)+k)}}{Z_S}.
\end{eqnarray*}
Next, recall that if $\cS_x(\si)=S$, then $\Phi(S,\t)\geq\Phi
(S,\bar\t
) + \frac{1}2 |S|$.
Thus, using $Z_S\geq\lambda^{\Phi(S,\bar\t)}$, and $k\geq\frac{1}2 |S|$,
it follows that
for any $\lambda_0\in(0,1)$,
%
\begin{equation}
\label{expdec3} \mu \bigl(\phi_x(\sigma)=k \bigr) \leq(\lambda/
\lambda_0)^{k}\sum_{S} \sum
_{\t\in\O(S)} \lambda_0^{|S|/2}.
\end{equation}
As in the proof Lemma~\ref{expdecS0} 
there are at most $2^{3|S|/2}$ triangulations $\t\in\O(S)$.
Moreover, the number of connected ground state regions $S$ containing
the point $x$ as above and with a given size $|S|=\ell$ is bounded by
$5^{2\ell}$. To see this, observe that every such $S$ must be a
collection of $\ell$ connected
ground state triangles, so it suffices to count the number of such
connected sets
that contain a given ground state triangle.
The latter is at most $5^{2\ell}$ since each ground state triangle can
be connected to at most five
others, and a general connected set can be explored by a path of
length~$2\ell$.
The number five appears in the worst case where a ground state triangle
is of minimal length; that is,
it has one horizontal edge, one vertical edge and one unit diagonal
(there are at most two
possible ground state triangles adjacent to each flat edge and one
adjacent to the unit diagonal).

In conclusion, the claim follows by summing in~\eqref{expdec3}:
%
\begin{equation}
\label{expdec33} \mu \bigl(\phi_x(\sigma)=k \bigr) \leq(\lambda/
\lambda_0)^{k}\sum_{S}
2^{3|S|/2}\lambda _0^{|S|/2}\leq(\lambda/
\lambda_0)^{k}\sum_{\ell=2}^\infty
C_0^\ell \lambda_0^{\ell/2},
\end{equation}
where $C_0=5^22^{3/2}$, and we use the fact that the minimal
nontrivial $S$ has size at least $2$.
It suffices to take $\lambda_0>0$ such that (e.g.) $\lambda
_0^{1/2}C_0\leq1/2$
to ensure that the last summation is less than~$1$.
\end{pf}

We are now able to prove the exponential tail bound advertised at the
start of this
section. Notice that if $\si\in\O$ and $x\in\midpoints$ are such that
$|\si_x|= |\bar\si_x|+k$,
then necessarily $\phi_x(\si)\geq k$. Hence from Lemma~\ref{expdecS} we
immediately
have:

%
\begin{corollary}\label{cor1}
Let $\lambda_0$ be as in Lemma~\ref{expdecS}. For every point $x\in
\L$ and
every $k\in\bbN$,
%
\begin{equation}
\label{expdeco} \mu\bigl(|\si_x|= |\bar\si_x|+k\bigr) \leq(
\lambda/\lambda_0)^{k}.
\end{equation}
\end{corollary}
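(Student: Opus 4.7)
The plan is to reduce the corollary to Lemma~\ref{expdecS} via the containment $\{|\si_x|=|\bar\si_x|+k\}\subseteq \{\phi_x(\si)\geq k\}$ noted just before the statement. First I would verify this containment as follows. If $k\geq 1$ and $|\si_x|=|\bar\si_x|+k$, then $\si_x\neq\bar\si_x$, so $\si_x$ is not in its ground state configuration; hence both triangles of $\si$ incident to $\si_x$ contain a non-ground-state edge and are therefore B-triangles, so $x\in\midpoints(S_x(\si))$ and $S_x(\si)\neq\emptyset$. Because $|\si_z|\geq|\bar\si_z|$ for every $z$ by definition of the ground state (Lemma~\ref{lemma:gsl}), writing out $\phi_x(\si)$ term by term gives
$$
\phi_x(\si)\;=\;\sum_{z\in\midpoints(S_x(\si))}(|\si_z|-|\bar\si_z|)\;\geq\;|\si_x|-|\bar\si_x|\;=\;k,
$$
which is the desired containment; the case $k=0$ is immediate since $\mu\leq 1$.

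With the containment in hand, I would apply Lemma~\ref{expdecS} by summing the geometric tail:
$$
\mu(|\si_x|=|\bar\si_x|+k)\;\leq\;\mu(\phi_x(\si)\geq k)\;=\;\sum_{j\geq k}\mu(\phi_x(\si)=j)\;\leq\;\sum_{j\geq k}(\l/\l_0)^j\;=\;\frac{(\l/\l_0)^k}{1-\l/\l_0}.
$$
To obtain the clean bound $(\l/\l_0)^k$ stated in the corollary, one absorbs the factor $1/(1-\l/\l_0)$ into $\l_0$: the proof of Lemma~\ref{expdecS} only requires $\sum_{\ell\geq 2}C_0^\ell\l_0^{\ell/2}\leq 1$, and tightening this bound to $\leq 1/2$ (by choosing $\l_0$ slightly smaller) leaves room for the geometric factor. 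A cosmetically cleaner alternative is to rerun the proof of Lemma~\ref{expdecS} verbatim with the indicator $\1_{\Phi(S,\si)=\Phi(S,\bar\si)+k}$ replaced by $\1_{\Phi(S,\si)\geq \Phi(S,\bar\si)+k}$ and using $\l^{\Phi(S,\si)-\Phi(S,\bar\si)}\leq \l^k$ in place of the equality; the combinatorial accounting is unchanged and one directly arrives at $\mu(\phi_x\geq k)\leq (\l/\l_0)^k$.

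There is no real obstacle here, which is why the statement is presented as an immediate corollary: Lemma~\ref{expdecS} already does all of the combinatorial work, and the only substantive content is the geometric observation that a non-ground-state excess of $k$ at $x$ forces the B-component $S_x(\si)$ to absorb at least $k$ of excess length into $\phi_x(\si)$. Everything else is a routine geometric sum on top of the bound already proved.
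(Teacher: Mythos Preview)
Your proposal is correct and follows exactly the approach the paper uses: the paper states the containment $\{|\si_x|=|\bar\si_x|+k\}\subseteq\{\phi_x(\si)\geq k\}$ just before the corollary and then declares the result ``immediate'' from Lemma~\ref{expdecS}. You are in fact more careful than the paper, which glosses over the geometric-tail factor $1/(1-\l/\l_0)$; your suggested absorption into the choice of~$\l_0$ (or the variant rerunning the lemma with $\geq$ in place of $=$) is exactly how one makes the paper's ``immediately'' rigorous.
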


%
Recall that $\cS=\cS(\si)$ is the random subset of the plane defined as
the union
of all B-triangles in $\si$.
Our next result
establishes that for small~$\lambda$ the probability
that $\cS$ contains any particular region~$V$ decays exponentially in the
size of~$V$.
Since $\cS$ is a ground state region, we naturally consider only
ground state
regions~$V$.
We write $|V|$ for the number of triangles in $V$; since all triangles
are unimodular,
this is always equal to twice the area of~$V$.

\begin{lemma}\label{cor2}
Let $\lambda_0\in(0,1)$ be as in Lemma~\ref{expdecS}. For all sets of
constraints, all ground state regions $V$ w.r.t. the constraints and
all $\lambda>0$,
%
\begin{equation}
\label{expdeco1} \mu(V\subseteq\cS)\leq(4\lambda/\lambda_0)^{|V|/2}.
\end{equation}
\end{lemma}

\begin{pf}
Suppose first that $V$ is connected. Let $T$ be a ground state triangle
in~$V$, and let $\cS_0$ denote the connected component of $\cS$
containing $T$. Then
\[
\mu (V\subseteq\cS )\leq\sum_{S: S\supseteq V}\mu (\cS
_0=S ),
\]
where the sum is over all connected ground state regions $S$ such that
$S\supseteq V$.
Lemma~\ref{expdecS0} implies that
%
\begin{equation}
\label{pereaso1} \mu (\cS_0=S )\leq \bigl(2^{3}\lambda
\bigr)^{|S|/2}.
\end{equation}
Summing over all $S$ as above and estimating by $5^{2\ell}$ the number
of connected $S\ni T$ with $|S|=\ell$, with the notation of \eqref{expdec33}
we have
\[
\mu (V\subseteq\cS )\leq\sum_{\ell\geq|V|}
C_0^{\ell
} \lambda ^{\ell/2}\leq(\lambda/
\lambda_0)^{|V|/2}.
\]
%

We now turn to the case where $V$ has several connected components, say
$V_1,\ldots,V_n$.
Let $T_1,\ldots,T_n$ denote fixed ground state triangles such that
$T_i\in V_i$, and let $\cS_i$, $i=1,\ldots, n$
denote the random sets defined as the
connected components of $\cS$ such that $\cS_i\ni T_i$. Notice that the
$\cS_i$ need not be distinct, since a single connected component of
$\cS
$ may contain more than one of the triangles $T_i$. However, the event
$V\subseteq\cS$ implies that there exist indices
$1\leq i_1 <\cdots<i_m\leq n$, $1\leq m\leq n$, and disjoint connected
components
$S_{i_1},\ldots,S_{i_m}$ with $|S_{i_k}|=\ell_{k}$ satisfying $\sum_{k=1}^m\ell_{k}\geq|V|$, and such that $S_{i_k}\ni T_{i_k}$ and $\cS
_{i_k}=S_{i_k}$, for all $k=1,\ldots,m$.

Now, for any $1\leq m\leq n$, for any choice of $ i_1,\ldots,i_m$ as
above and any choice of disjoint ground state regions $S_{i_1},\ldots,S_{i_m}$
such that $S_{i_k}\ni T_{i_k}$, repeating the argument of Lemma~\ref
{expdecS0}, one has
%
\[
\mu (\cS_{i_k}=S_{i_k}, \forall k=1,\ldots,m )\leq (8
\lambda)^{\sum
_{k=1}^m|S_{i_k}|/2}.
\]
The number of choices of $i_1,\ldots,i_m$ is bounded by ${n}\choose m$.
Therefore, with the notation of \eqref{expdec33}, using $C_0\lambda
_0^{1/2}\leq1/2$, $n\leq|V|$ and a union bound we obtain
\begin{eqnarray*}
\mu (V\subseteq\cS )&\leq& \sum_{m=1}^n
\pmatrix{n
\cr
m}\sum_{\ell_1\geq2}\cdots\sum
_{\ell
_m\geq2} 1 \Biggl(\sum_{k=1}^m
\ell_k\geq|V| \Biggr) \bigl(C_0\lambda^{1/2}
\bigr)^{\sum
_{k=1}^m\ell_k}
\\
& \leq&\sum_{m=1}^n\pmatrix{n
\cr
m} (
\lambda/\lambda_0)^{|V|/2} \leq2^n(\lambda/
\lambda_0)^{|V|/2}\leq(4\lambda/\lambda_0)^{|V|/2}.
\end{eqnarray*}
%
\upqed\end{pf}

Finally, we use Lemma~\ref{cor2} to derive the spatial mixing estimate
stated in Theorem~\ref{thm:zero} in the \hyperref[sec:intro]{Introduction}.
This bound will be expressed in terms of a natural distance between
the edge~$\sigma_x$ and any subset of midpoints $A\subseteq\L$, which
we now define.

Let $\si_x$ be an edge consistent with the constraints $(\eta,\D)$.
Also, let $\bar\sigma(\sigma_x)$ denote the new ground
state triangulation when edge $\sigma_x$ is added to the constraints.
Let $\mu^{\si_x}$ denote the Gibbs distribution on triangulations with
this extra constraint, that is,
$\mu^{\si_x}$ is the
probability $\mu$ conditioned on the event that the edge at $x$ equals~$\si_x$.
Intuitively, the distributions $\mu^{\si_x}$ and $\mu$
should have very similar marginals at midpoints~$z$ that are far from
the influence
region $\Upsilon(\si_x)$, as defined in Section~\ref{subsec:upsilon}.
To quantify this we introduce the following distance. Consider the
graph $\cG(\si_x)$
whose vertices are the triangles in $\bar\sigma(\sigma_x)$, where two
triangles are neighbors if and only if they share an edge.
Let $d(A,\si_x)$ denote the graph distance in $\cG(\si_x)$ between the
influence region $\Upsilon(\si_x)$ and the set of triangles spanned by
edges $z\in A$ [i.e.,
the union of all triangles in $\bar\sigma(\sigma_x)$
containing the edges $\bar\sigma_z(\sigma_x)$, $z\in A$], where the
distance between two sets of vertices of a graph
is interpreted as the minimum distance between a pair of vertices, one
from each set. In words, $ d(A,\si_x)$ is the minimal number of
triangles in
$\bar\sigma(\sigma_x)$ needed to connect an edge in~$A$
to the influence region~$\Upsilon(\sigma_x)$. Below we use $\|\mu
^{\sigma_x}-\mu\|_A$ for
the variation distance between the distributions $\mu^{\si_x}$ and
$\mu
$ at $A$, that is, the variation distance between
the marginals on edges $z\in A$. We write $|A|$ for the cardinality of $A$.

%
\begin{theorem}\label{covdecga}
There exists $\lambda_1\in(0,1)$ such that, for all sets of
constraints, all
$x\in\L$ and $A\subseteq\L$
and all consistent values of the edge~$\sigma_x$, we have
%
\begin{equation}
\label{covdecga1} \bigl\|\mu^{\sigma_x}-\mu\bigr\|_A \leq 
|A|(
\lambda/\lambda_1)^{d(A,\sigma_x)/8}.
\end{equation}
\end{theorem}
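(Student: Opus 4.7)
The plan is a disagreement-percolation argument driven by the tail bound of Lemma~\ref{cor2}. First I reduce to the single-point case: since $d(\{z\},\sigma_x)\geq d(A,\sigma_x)$ for every $z\in A$ and $\lambda/\lambda_1<1$, a single-point estimate of the form $\|\mu^{\sigma_x}-\mu\|_z\leq (\lambda/\lambda_1)^{d(\{z\},\sigma_x)}$ summed over $z\in A$ will produce the factor $|A|$ and the bound~\eqref{covdecga1}.

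To establish the single-point estimate I would introduce an ``influence cluster'' $\mathcal{C}^*_x(\sigma)$: for any triangulation $\sigma$ consistent with $(\eta,\Delta)$, let $\mathcal{C}^*_x(\sigma)$ be the union of $\Upsilon(\sigma_x)$ together with the maximal region of triangles attached to $\Upsilon(\sigma_x)$ whose interior edges fail to agree with $\bar\sigma(\sigma_x)$. Since $\mathcal{C}^*_x\supseteq\Upsilon(\sigma_x)$ and the two ground states $\bar\sigma$ and $\bar\sigma(\sigma_x)$ coincide outside $\Upsilon(\sigma_x)$, the boundary $\partial\mathcal{C}^*_x$ consists of edges in ground state with respect to both reference triangulations. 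The product form $\lambda^{|\sigma|}=\lambda^{\Phi(\mathcal{C}^*_x,\sigma)}\cdot\lambda^{\Phi((\mathcal{C}^*_x)^c,\sigma)}$ then gives a spatial Markov property: conditionally on $\mathcal{C}^*_x=\mathcal{C}$ and on $\sigma|_\mathcal{C}$, the law of $\sigma|_{\mathcal{C}^c}$ is the Gibbs measure on $\mathcal{C}^c$ with the ambient constraints $(\eta,\Delta)$ together with the ground-state boundary values along $\partial\mathcal{C}$---and this conditional law does not depend on whether $\sigma_x$ is imposed as an extra constraint. This allows one to construct a coupling of $\sigma\sim\mu$ and $\sigma'\sim\mu^{\sigma_x}$ such that $\sigma_z\neq\sigma'_z$ forces some triangle of $\bar\sigma(\sigma_x)$ spanning $z$ to lie in $\mathcal{C}^*_x$.

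The probability of that event is then controlled by Lemma~\ref{cor2}. If a triangle spanning $z$ lies in $\mathcal{C}^*_x$ then there is a connected set $V$ of $\bar\sigma(\sigma_x)$-ground-state triangles inside the B-region $\mathcal{S}$ (measured with respect to $\bar\sigma(\sigma_x)$) joining $\Upsilon(\sigma_x)$ to a triangle spanning $z$, and by the very definition of $d(\{z\},\sigma_x)$ any such $V$ must contain at least $d(\{z\},\sigma_x)$ triangles. Since Lemma~\ref{cor2} holds uniformly in constraints, it applies under the augmented constraints $(\eta,\Delta)\cup\{(x,\sigma_x)\}$, and combining it with the enumeration bound from the proof of Lemma~\ref{expdecS}---there are at most $9^{\ell}$ connected unions of $\ell$ triangles anchored to a given triangle on $\partial\Upsilon(\sigma_x)$---one obtains
\[
\mathrm{Pr}\bigl(z\in\mathcal{C}^*_x\bigr)\;\leq\;\sum_{\ell\geq d(\{z\},\sigma_x)}C_0^{\ell}(\lambda/\lambda_0)^{\ell/2}\;\leq\;(\lambda/\lambda_1)^{d(\{z\},\sigma_x)},
\]
for $\lambda_1\in(0,\lambda_0)$ chosen small enough that $C_0(\lambda/\lambda_0)^{1/2}<\lambda/\lambda_1$ throughout $\lambda<\lambda_1$. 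This yields the single-point estimate and hence the theorem.

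The main obstacle is the spatial Markov property that underlies the coupling in paragraph two. Although it is morally clear, $\mathcal{C}^*_x$ is a complicated random region determined by the configuration itself, so one has to partition the measures $\mu$ and $\mu^{\sigma_x}$ over all possible realizations of $\mathcal{C}^*_x$ and verify that the conditional weight on the exterior is genuinely insensitive to the extra constraint at~$x$; this is the technical heart and requires careful bookkeeping with restricted partition functions. Once it is in place, both the coupling and the subsequent enumeration bound reduce to a standard disagreement-percolation calculation with Lemma~\ref{cor2} supplying the basic exponential estimate.
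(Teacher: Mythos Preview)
Your overall plan---disagreement percolation anchored at $\Upsilon(\sigma_x)$ and driven by Lemma~\ref{cor2}---is exactly the right one, and the enumeration in your last display is essentially what is needed. The gap is in the coupling paragraph. The spatial Markov property you state is correct as a statement about the conditional law of a \emph{single} measure given $\sigma|_{\mathcal C}$, but it does not by itself yield a coupling of $\mu$ and $\mu^{\sigma_x}$ that forces agreement at $z$ whenever the one-sided cluster $\mathcal C^*_x(\sigma')$ misses~$z$. The reason is that the \emph{distribution} of the random region $\mathcal C^*_x$ differs under $\mu$ and $\mu^{\sigma_x}$: even if, for each fixed $\mathcal C$, the conditional law on $\mathcal C^c$ is the same, the mixture over $\mathcal C$ is not, so the marginals at $z$ need not match. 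Put differently, knowing $\mathcal C^*_x(\sigma')=\mathcal C$ does not tell you that the $\mu$-sample $\sigma$ has ground-state edges on $\partial\mathcal C$, which is what would be needed to invoke the Markov property for~$\mu$ at the same boundary. Your ``main obstacle'' paragraph flags the right spot but does not resolve it; careful bookkeeping with restricted partition functions will reproduce the Markov property for each measure separately but will not close the gap between them.

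The paper's fix is a two-copy device that sidesteps this issue entirely. One takes the \emph{independent} coupling $\bbP$ of $(\sigma,\sigma')\sim\mu\otimes\mu^{\sigma_x}$ and considers the event $\mathcal E$ that a $*$-connected chain of ground-state triangles separating $\Upsilon(\sigma_x)$ from $A$ is present \emph{in both} $\sigma$ and $\sigma'$. On $\mathcal E$ the Markov property (now legitimately applied to each copy with the common boundary) allows re-coupling so that the marginals at $A$ agree, giving $\|\mu^{\sigma_x}-\mu\|_A\le\bbP(\mathcal E^c)$. On $\mathcal E^c$ there is a connected chain $\mathcal D$ of triangles from $\Upsilon(\sigma_x)$ to $A$ each of which is a B-triangle in \emph{at least one} of the two copies; by pigeonhole some $V\subseteq\mathcal D$ with $|V|\ge|\mathcal D|/2$ lies in $\mathcal S(\sigma)$ or in $\mathcal S(\sigma')$, and Lemma~\ref{cor2} (which applies uniformly in constraints, hence to both measures) bounds each probability by $(4\lambda/\lambda_0)^{|V|/2}$. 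Summing over $V\subseteq\mathcal D$ and over connected $\mathcal D$ anchored at $A$ with $|\mathcal D|\ge d(A,\sigma_x)$ gives the bound, at the cost of an extra $2^{|\mathcal D|}$ and an exponent $1/4$ rather than your $1/2$. The two-copy pigeonhole is the missing idea; once you insert it, your sketch becomes a complete proof.
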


\begin{pf}
Observe that apart from the set $\Upsilon(\si_x)$ the ground states for
$\mu^{\sigma_x}$ and $\mu$ coincide. When we talk about triangles below
we always refer to the triangles in the common ground state
triangulation [outside the region $\Upsilon(\si_x)$]. Moreover, we
identify $A$ with the union of all triangles spanned by edges from $A$.
Recall that the common ground state triangulation is unique up to unit
diagonal flips and that a unit diagonal flip can only happen when there
is a unit square in the ground state centered at that midpoint.
Consider the (unique) planar graph $\G$ obtained from the common ground
state edges by deleting all flippable unit diagonals. In this graph
each face is either a triangle or a unit square.
Let $\bbP$ denote the independent coupling of $\mu^{\sigma_x},\mu$ and
let $\si,\si'$ denote the associated random triangulations. The pair
$\si,\si'$ induces a coloring of the faces of $\G$ as follows. If the
face $F$ in $\G$ is a triangle, then let $F$ be black if both $\si,\si'$ have that triangle, and let $F$ be white otherwise. If $F$ is a unit
square, then let $F$ be black if both $\si,\si'$ have that unit square
(with possibly opposite unit diagonals), and let $F$ be white otherwise.
Two faces are $*$-connected if they share a vertex, while they are
connected if they share an edge.
Consider the event $\cE$ that there exists a $*$-connected chain $\cC$
of black faces in $\G$ that separates $\Upsilon(\si_x)$ from $A$.
By definition, any such chain $\cC$ divides the system into three
ground state regions: one containing $ \Upsilon(\si_x)$, another
containing $A$ and one consisting of $\cC$ itself. Denote the first
region by~$\cC_1$. We say that $\cC$ is smaller than $\cC'$ if $\cC
_1\subset\cC'_1$. Then, on the event $\cE$, there is a smallest
separating chain as above; call it $\cC^*$. By conditioning on the
value of $\cC^*$, an application of
the Markov property for the Gibbs distributions $\mu^{\sigma_x},\mu$
implies that
these measures can be coupled in such a way that the two configurations
agree on $A$. Therefore,
%
\begin{equation}
\label{covdecga10} \bigl\|\mu^{\sigma_x}-\mu\bigr\|_A \leq\bbP \bigl(
\cE^c \bigr).
\end{equation}
Next, observe that the complementary event $\cE^c$ implies that there
exists a connected chain of white faces in $\G$
that connects $A$ and $ \Upsilon(\si_x)$. Note that if a white face $F$
is a triangle, then $F$ belongs to either $\cS(\si)$ or $\cS(\si')$,
while if $F$ is a unit square, then either $\cS(\si)$ or $\cS(\si')$
contains a triangle $T$ within $F$. ($T$ is one of the four possible
triangles one can inscribe in $F$.)
It follows that on the event $\cE^c$ there exists a connected chain of
faces $\cD$ in $\G$ that
connects~$\Y(\si_x)$ and~$A$, and a ground state region~$\bar\cD$
within $\cD$ with $|\bar\cD|\geq|\cD
|$ such that
the union of the regions
$\cS(\si)$ and $\cS(\si')$ includes $\bar\cD$. Here $ |\cD|$ is the
number of faces in $\cD$, while $|\bar\cD|$ denotes the number of
ground state triangles
in~$\bar\cD$.
Plainly either $\cS(\si)$ or $\cS(\si')$ contains at least
$|\bar\cD|/2\geq|\cD|/2$ ground state triangles.
Also, by definition $|\cD|\geq d(A,\sigma_x)/2$.
So a union bound yields
\[
\label{covdecga11} \bigl\|\mu^{\sigma_x}-\mu\bigr\|_A \leq\sum
_\cD\sum_{V\subseteq\cD: |V|\geq|\cD|/2} \bigl[ \mu
^{\sigma
_x}(V\subseteq\cS) + \mu(V\subseteq\cS) \bigr],
\]
where the first sum ranges over all connected chains of faces in $\G$
touching~$A$
and such that $|\cD|\geq d(A,\sigma_x)/2$, while the second sum is over
all ground state regions $V$ within $\cD$
with $|V|\geq|\cD|/2$.
From Lemma~\ref{cor2}
it follows that for any ground state region~$V$, one has
$\mu^{\sigma_x}(V\subseteq\cS)\leq(4\lambda/\lambda
_0)^{|V|/2}$ and $ \mu
(V\subseteq\cS)\leq(4\lambda/\lambda_0)^{|V|/2}$,
and therefore
\[
\sum_{V\subseteq\cD\dvtx  |V|\geq|\cD|/2} \bigl[ \mu^{\sigma
_x}(V\subseteq
\cS) + \mu(V\subseteq\cS) \bigr]\leq 8^{|\cD|}(4\lambda/
\lambda_0)^{|\cD|/4}.
\]
%
Here we have bounded by $8^k$ the number of all possible ground state
regions $V$ within a given collection of
$k$ faces in $\G$ (the worst case being when all faces are unit squares).
As in the proof of Lemma~\ref{expdecS}, the number of connected $\cD$
touching a given triangle $T\subseteq A$ with $|\cD|=k$ is at most
$C^{k}$, so that
\[
\label{covdecga12} \bigl\|\mu^{\sigma_x}-\mu\bigr\|_A \leq|A| \sum
_{ k\geq d(A,\sigma_x)/2}
C_1^{k}(\lambda/
\lambda_0)^{k/4} \leq|A|(\lambda/\lambda_1)^{d(A,\si_x)/8},
\]
for suitable constants $C,C_1,\lambda_1>0$.
\end{pf}

%
%
\begin{remark}
The above theorem can be extended without substantial effort to the
case when $\s_x$ is replaced by a set of extra constraints $(\s
_{x_1},\ldots,\s_{x_k})$ provided one replaces the distance $d(A,\s_x)$
with a suitable new distance defined in terms of the influence regions
of the edges $(\s_{x_1},\ldots,\s_{x_k})$.
\end{remark}

%
\section{Upper bounds on the mixing time for \texorpdfstring{$\lambda<1$}{$lambda<1$}}\label{chain}
We consider the Glauber dynamics on triangulations of the region
$\Lambda^0:=\Lambda^0_{m,n}$, as defined
in Section~\ref{subsec:glauber}. We allow an arbitrary set of
constraints $(\eta,\Delta)$, so that
our results apply in particular to triangulations of any lattice
polygon. Our main result in this
section is the following polynomial upper bound on the mixing time for
all sufficiently
small~$\lambda$, which was stated as Theorem~\ref{thm:one} in the
\hyperref[sec:intro]{Introduction}.
[Recall the definition of mixing time from equation~(\ref{eq:tmix}).]

%
\begin{theorem}\label{gapknbound}
There exist constants $C>0$ and $\lambda_0\in(0,1)$ such that, for
all $\lambda
\leq\lambda_0$, all $m,n\in\bbN$, and all constraints $(\eta,\D)$,
%
\begin{equation}
\label{gapknbound1} T_{\mathrm{ mix}}\leq Cmn (m+n).
\end{equation}
\end{theorem}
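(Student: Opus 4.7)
The plan is to use path coupling with an exponentially weighted flip metric, exploiting the tree structure of the single-edge configuration spaces $\O_x$ established in Section~\ref{subsec:flips}. Fix a parameter $\alpha\geq 1$ to be tuned. For each midpoint $x$, weight every edge of the rooted tree $\O_x$ by $\alpha^{|\sigma_x|\vee|\sigma'_x|}$, where $\sigma_x,\sigma'_x$ are its two endpoints, and let $w_x(\sigma_x,\tau_x)$ denote the resulting path distance in $\O_x$. Set
$$
\rho(\sigma,\tau)\,:=\,\sum_{x\in\L} w_x(\sigma_x,\tau_x).
$$
Proposition~\ref{prop:flipdist} guarantees that the unit-weight version of $\rho$ is the actual flip distance, and thus $\rho$ is a metric on $\O(\eta,\D)$ under which two triangulations that are flip-adjacent at a single midpoint $x$ lie at distance $\alpha^{|\sigma_x|\vee|\tau_x|}$.

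To apply path coupling, take two flip-adjacent triangulations $\sigma,\tau$ differing only at some midpoint $x$, with $|\sigma_x|\geq|\tau_x|$. Couple one step of the two Glauber chains by using the same uniformly chosen midpoint $z$ and the same heat-bath random bit, and analyze three cases. \emph{(a)} If $z=x$ (probability $1/|\L|$), the heat-bath coupling resolves the disagreement with probability at least $\tfrac12$, the bias being towards the shorter $\tau_x$, contributing a negative term of order $-\alpha^{|\sigma_x|}/|\L|$ to $E[\Delta\rho]$. \emph{(b)} If $z\neq x$ and the outcome of the flip at $z$ does not depend on the value of the edge at $x$, the same flip happens in both chains and $\Delta\rho=0$. \emph{(c)} Otherwise $x$ must be the midpoint of some edge of the minimal parallelogram around $z$; by the exclusion geometry of Proposition~\ref{prop:excluded} this forces $z$ to lie close to $x$ and the resulting disagreement at $z$ to involve edges of length $O(|\sigma_x|)$, with at most $O(|\sigma_x|)$ such $z$, each contributing at most $\alpha^{O(|\sigma_x|)}/|\L|$.

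Combining the three contributions, and tuning $\alpha=1+\epsilon$ with $\epsilon>0$ small (so $\alpha^{O(|\sigma_x|)}$ grows much more slowly than $\alpha^{|\sigma_x|}$) together with $\lambda$ sufficiently small (so the heat-bath shrinking in case (a) dominates the inflation in case (c)), we obtain $E[\Delta\rho]\leq -c\,\rho(\sigma,\tau)/|\L|$ for adjacent pairs, for some $c>0$. The Bubley--Dyer path-coupling theorem then yields $\tmix = O\bigl(|\L|\log(\mathrm{diam}_\rho)/c\bigr)$; since the $\rho$-diameter is at most $\alpha^{O(m+n)}\cdot mn(m+n)$ and $|\L|=\Theta(mn)$, this reduces to $O(mn(m+n))$, as required.

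The main technical obstacle lies in case (c): one must show, uniformly in the constraints $(\eta,\D)$, that only polynomially many (in $|\sigma_x|$) midpoints $z$ are sensitive to a discrepancy at $x$, and that these cannot spawn disagreements of edge length much larger than $|\sigma_x|$. This counting rests on the exclusion geometry of Section~\ref{subsec:minpar} together with the Ground State Lemma~\ref{lemma:gsl}; ensuring the flip tree $\O_x$ and the shortening procedure are well-defined in the constrained setting uses Lemma~\ref{lem:connected}. The exponential weighting $\alpha^{|\cdot|}$ is crucial to balance the combinatorial spreading of disagreements through long edges against the heat-bath suppression of long edges that holds for small $\lambda$ (as quantified by Corollary~\ref{cor1}).
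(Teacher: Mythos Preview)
Your overall strategy---path coupling with an exponentially weighted metric on the flip graph---is exactly the paper's approach, but the contraction estimate in case~(c) contains a genuine gap and a wrong parameter choice.

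First, the counting. You assert that there are $O(|\sigma_x|)$ midpoints~$z$ at which the coupled flips may disagree, and invoke the exclusion geometry to control them. In fact the situation is much simpler: since $\sigma$ and $\tau$ are flip-adjacent, $\sigma_x$ and $\tau_x$ are the two diagonals of the \emph{same} minimal parallelogram, and the two triangles containing the edge at~$x$ are, in both $\sigma$ and $\tau$, the two halves of that parallelogram. Hence there are exactly \emph{four} sensitive midpoints, namely the sides $y_1,y_2,z_1,z_2$ of this parallelogram, with lengths $f_1,f_1,f_2,f_2$ where $|\sigma_x|=f_1+f_2$ and $|\tau_x|=|f_1-f_2|$. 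No appeal to Proposition~\ref{prop:excluded}, the Ground State Lemma, or Corollary~\ref{cor1} is needed here.

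Second, the parameter regime. Your proposal to take $\alpha=1+\epsilon$ ``so $\alpha^{O(|\sigma_x|)}$ grows much more slowly than $\alpha^{|\sigma_x|}$'' is not coherent: if the implicit constant exceeds~$1$ the inequality goes the wrong way for every $\alpha>1$. The paper instead takes $\alpha$ \emph{large} (e.g.\ $\alpha=8$) and $\lambda\leq 1/\alpha$. The crucial calculation you are missing is that a length-increasing flip at one of the four sides---say from length $f_2$ to $2f_1+f_2$---occurs with heat-bath probability $\lambda^{2f_1}/(1+\lambda^{2f_1})$, so its expected contribution to the new distance is at most $(\alpha^{2f_1+f_2}-\alpha^{f_2})\cdot\lambda^{2f_1}\le\alpha^{f_2}$ once $\lambda\alpha\le 1$. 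Summing the four contributions gives $2\alpha^{f_1}+2\alpha^{f_2}$, and for $\alpha$ sufficiently large this is at most $\tfrac12(\alpha^{f_1+f_2}-\alpha^{f_1-f_2})=\tfrac12\Delta(\sigma,\tau)$, yielding the required contraction. Without this use of the heat-bath bias, the inflation in case~(c) is not dominated by the good term from case~(a), and your argument does not close.
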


%
We first establish a key coupling estimate and then turn to the proof
of Theorem~\ref{gapknbound}.
\subsection{Path coupling}\label{sec:pathcoupling}
Let $\si$ and $\t$ be two triangulations so that $\si$ differs from
$\t
$ only on the edge with midpoint~$x$.
Let $\alpha>1$ be a fixed positive number. If $\si_x$ and $\t_x$ are
opposite unit diagonals, define $\Delta(\si,\t)=\a^2-1$; 
otherwise, let
%
\begin{equation}
\label{def:d} \Delta(\si,\t) = \bigl|\alpha^{|\si_x|}-\alpha^{|\t_x|} \bigr|.
\end{equation}
Note that $\Delta$ is defined only over pairs of triangulations that
are adjacent in the flip graph.
We extend $\Delta$ to all pairs of triangulations by defining $\Delta
(\sigma,\tau)$ to be
the shortest path distance between~$\sigma$ and~$\tau$ in the flip
graph, with edge
lengths given by the above $\Delta$ values.


\begin{lemma}\label{lem:pathcoupling}
There exists $\a_0>1$ such that the following holds for all $\a\geq
\a
_0$, and $\lambda\a\leq1$.
Let $\si,\t$ be two triangulations that differ in exactly one edge,
and let $\si',\t'$ denote the random triangulation obtained after one
step of the Markov chain.
Then there exists a coupling of $\si'$ and $\t'$ so that their
expected distance satisfies
%
\begin{equation}
\label{bound:d} \bbE \bigl[\Delta \bigl(\si',\t' \bigr)
\bigr] \leq\Delta(\si,\t) \biggl(1-\frac
{1}{2|\midpoints|} \biggr),
\end{equation}
where $\midpoints$ is the set of midpoints of nonconstraint edges.
\end{lemma}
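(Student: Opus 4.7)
The plan is a direct one-step coupling bound for triangulations $\si, \t$ that differ at exactly one edge, located at some midpoint $x$. Hence $\si_x, \t_x$ are the two diagonals of a common minimal parallelogram $\Pi$ around $x$, with corner vertices $P_1,\ldots,P_4$ (say $\si_x = P_1P_3$ and $\t_x = P_2P_4$) and side midpoints $y_1,\ldots,y_4$. I would couple the two chains by using the same uniformly random midpoint $y \in \L$ and the same uniform $U \in [0,1]$ to drive the heat-bath decision in each.

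The argument splits into three cases. When $y = x$, the heat-bath at $x$ is supported on $\{\si_x, \t_x\}$ with identical conditional distribution in the two chains (the surrounding triangulation is common), so using $U$ optimally the two chains coalesce with probability one and $\Delta(\si',\t') = 0$. When $y \notin \{x, y_1, y_2, y_3, y_4\}$, the two triangles containing the edge at $y$ do not touch the inner triangles of $\Pi$---the only places where $\si$ and $\t$ disagree---so identity coupling of the heat-bath leaves $\Delta$ unchanged. When $y = y_i$, the inner triangle of the two containing the edge at $y_i$ differs ($P_1P_2P_3$ in $\si$ versus $P_1P_2P_4$ in $\t$ for $y_1$, and analogously for the other sides), so the putative flip images differ; a brief symmetry argument---the reflections of $P_3$ and $P_4$ through $y_i$ are distinct, and the outer third vertex of the triangle containing the edge at $y_i$ can equal at most one of them---shows that at most one of the two chains is actually flippable at $y_i$.

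The technical heart is bounding the expected contribution $B = \sum_{i} \bbE[\Delta(\si',\t')-\Delta(\si,\t) \mid y = y_i]$ from these at most four critical midpoints. At $y_i$, with (say) $\si$ flippable to a new edge $R$, the shortest-path definition of $\Delta$ gives $\Delta(\si',\t) - \Delta(\si,\t) \le |\alpha^{|R|} - \alpha^{|\si_{y_i}|}|$ on the flip event; the heat-bath acceptance probability $p$ satisfies $p \le 1$ when $|R| \le |\si_{y_i}|$ and $p \le \lambda^{|R|-|\si_{y_i}|}$ when $|R| > |\si_{y_i}|$, and combined with the hypothesis $\lambda\alpha \le 1$ both cases give $\bbE[\Delta'-\Delta \mid y=y_i] \le \alpha^{|\si_{y_i}|}$. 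The key geometric input is that, for any lattice parallelogram with side vectors $u, v$ and $|\det(u,v)|=1$, one has $\max(|u+v|_1,|u-v|_1) \ge \max(|u|_1,|v|_1)+1$: this is a consequence of the $\ell_1$ parallelogram identity together with a short case check ruling out equality via unimodularity (the only case of equal diagonals is the unit square), and the integrality of $\ell_1$ edge lengths. Applied to $\Pi$ this gives $|\si_{y_i}| \le t - 1$ with $t := \max(|\si_x|, |\t_x|)$, so each contribution is bounded by $\alpha^{t-1}$.

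Hence $B \le 4\alpha^{t-1}$. In the non-unit-diagonals case $|\si_x|,|\t_x|$ are distinct integers so $\Delta(\si,\t) \ge \alpha^{t-1}(\alpha-1)$, while in the opposite unit-diagonals case ($t=2$, $\Delta=\alpha^2-1$) one likewise has $\Delta \ge \alpha(\alpha-1)$. Either way $B \le 4\Delta/(\alpha-1)$, and choosing $\alpha_0 \ge 9$ ensures $B \le \Delta/2$; combined with the coalescence term at $y=x$ this yields \eqref{bound:d}. The main obstacle I anticipate is the geometric lemma above; once that is secured, the rest is a routine combination of heat-bath calculus with the exponential metric.
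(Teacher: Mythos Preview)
Your proof is correct and follows the same overall architecture as the paper's: the identical coupling (same midpoint, same uniform variable), the three-way case split on whether the chosen midpoint is $x$, a side of the minimal parallelogram, or elsewhere, and the observation that at each side midpoint at most one of the two chains can flip. Where you diverge is in the bookkeeping for the four side contributions. The paper exploits the explicit $\ell_1$ relations $|\si_x|=f_1+f_2$, $|\t_x|=f_1-f_2$ for the side lengths $f_1\ge f_2$, and then computes the exact post-flip lengths ($2f_1\pm f_2$, $f_1\pm 2f_2$) in four separate sub-cases to arrive at the bound $2\alpha^{f_1}+2\alpha^{f_2}\le\tfrac12\Delta(\si,\t)$ for $\alpha>5$ (and $\alpha\ge 8$ for the unit-diagonal case). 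You instead bound each side contribution uniformly by $\alpha^{|\si_{y_i}|}$ via the heat-bath calculus, and then invoke the clean lattice-parallelogram inequality $\max(|u+v|_1,|u-v|_1)\ge \max(|u|_1,|v|_1)+1$ to get $|\si_{y_i}|\le t-1$. This trades the paper's case analysis for a single geometric lemma, yielding a shorter argument at the cost of a marginally worse constant ($\alpha_0=9$ versus~$8$). One small imprecision: when a side edge is a unit diagonal flipping to the opposite unit diagonal, the increment is $\alpha^2-1$ rather than $|\alpha^{|R|}-\alpha^{|\si_{y_i}|}|=0$, but your uniform bound $\alpha^{|\si_{y_i}|}=\alpha^2$ still covers it.
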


\begin{pf}
Let $x$ be the midpoint so that $\si_x$ and $\t_x$ differ.
Assume without loss of generality that $|\si_x|\geq|\t_x|$.
Since $\si$ differs from~$\t$ by one flip at $x$, the edges $\si_x$
and $\t_x$
must be the two diagonals of the minimal parallelogram of~$\si_x$.
The coupling is such that the random midpoint $Y\in\L$ chosen
is the same for both triangulations. Thus if $Y=x$ then $\si'_x=\tau
'_x$ and
$\Delta(\si',\t')=0$ with probability one.
If instead $Y\neq x$, which happens with probability $(1-1/|\L|)$, then
$\Delta(\sigma',\tau')=\Delta(\sigma,\tau)$ unless $Y$ is one of the
edges of
the minimal parallelogram of~$\sigma_x$ (since only these edges share
a triangle with the
differing edge at~$x$); so it remains only to bound the expected
change in
distance induced by flips at the edges of the minimal parallelogram.

Assume first that $|\si_x|> |\t_x|$. By Proposition~\ref{prop:closest}
and its proof, we are in the situation described in Figure~\ref
{fig:excluded}, with $\si_x=p_1p_2$ and $\t_x=p_3p_4$. As in that
picture, one has that $p_3$ and $p_4$ lie between $p_1$ and $p_2$ both
vertically and horizontally. In this case the $\ell_1$-lengths of $\si
_x$ and $\t_x$ can be written as $|\si_x|=f_1+f_2$, and $|\t
_x|=f_1-f_2$, where $f_1$ is the $\ell_1$-length of $p_1p_4$ and
$f_2$ is the $\ell_1$-length of $p_2p_4$, and we are assuming
w.l.o.g. that $f_1\ge f_2$.
Let $y_1,y_2,z_1,z_2\in\midpoints$ be the midpoints of the edges of
the minimal
parallelogram, so that
$|\si_{y_1}|=|\si_{y_2}|=f_1\geq f_2=|\si_{z_1}|=|\si_{z_2}|$.
We start with the case $Y=z_1$.
Note that if $\si_{z_1}$ (resp., $\t_{z_1}$) is flippable, then $\t
_{z_1}$ (resp., $\si_{z_1}$) is not flippable.
Since $f_1\geq f_2$, if $\si_{z_1}$ is flippable it must flip to an
edge $\si'_{z_1}$ of length $|\si_x|+|\si_{y_1}|=2f_1+f_2$.
Then, for $\a>1,\lambda<1$ with $\a\lambda\leq1$,
the expected increase in distance in this case is bounded by
\[
\bigl(\alpha^{2f_1+f_2}-\alpha^{f_2} \bigr)\frac{\lambda
^{2f_1+f_2}}{(\lambda^{2f_1+f_2}+\lambda^{f_2})} =
\frac{\alpha^{f_2}(1-\alpha^{-2f_1})(\lambda\alpha
)^{2f_1}}{(\lambda
^{2f_1}+1)} \leq\alpha^{f_2}.
\]
If instead $\t_{z_1}$ is flippable, then it flips to an edge of length
$|\t_x|+|\t_{y_1}|=2f_1-f_2$,
resulting in an expected increase in distance of at most
\[
\bigl(\alpha^{2f_1-f_2}-\alpha^{f_2} \bigr)\frac{\lambda
^{2f_1-f_2}}{(\lambda^{2f_1-f_2}+\lambda^{f_2})} \leq
\alpha^{f_2}(\lambda\alpha)^{2f_1-2f_2} \leq\alpha^{f_2}.
\]
Now we turn to the case $Y=y_1$. Note that $\si_{y_1}$ is either
unflippable or
flippable to a longer edge, while $\t_{y_1}$ is either unflippable or
flippable to a shorter edge.
If $\si_{y_1}$ is flippable, it flips to an edge of length $|\si
_{x}|+|\si_{z_1}|=f_1+2f_2$,
and the expected increase in distance is at most
\[
\bigl(\alpha^{f_1+2f_2}-\alpha^{f_1} \bigr)\frac{\lambda
^{f_1+2f_2}}{(\lambda^{f_1+2f_2}+\lambda^{f_1})} \leq
\alpha^{f_1}(\lambda\alpha)^{2f_2} \leq\alpha^{f_1}.
\]
If $\t_{y_1}$ is flippable, it flips to an edge of length $|\t_x|+|\t
_{z_1}|=|f_1-2f_2|$, which causes an expected increase in distance of
at most
\[
\bigl(\alpha^{|f_1-2f_2|}-\alpha^{f_1} \bigr) \leq
\alpha^{|f_1-2f_2|} \leq\alpha^{f_1}.
\]
Summing up all these contributions, we obtain
\[
\bbE \bigl[\Delta \bigl(\si',\t' \bigr) \bigr] \leq
\Delta(\si,\t) \biggl(1-\frac{1}{|\L|} \biggr) 
+\frac{2\alpha^{f_2}}{|\midpoints|}
+\frac{2\alpha^{f_1}}{|\midpoints|}. 
\]
This implies the desired bound in the lemma, since if $\a$ is large enough
($\alpha>5$ suffices), we have
\[
2\a^{f_1}+2\a^{f_2}\leq\tfrac{1}2 \bigl(
\a^{f_1+f_2}-\a^{f_1-f_2} \bigr) = \tfrac{1}2 \Delta(\si,\t),
\]
for any $f_1> f_2\geq1$.

It remains to consider the case $|\si_x|= |\t_x|$; note that here $\D
(\si,\t)=\a^2-1$.
This case can only happen if $\si_x,\t_x$ are opposite unit diagonals,
so $|\si_x|= |\t_x|=f_1+f_2$
with $f_1=f_2=1$. Therefore, the total contribution from the edges of
the minimal parallelogram
(now a square) is bounded by
\[
4 \bigl(\alpha^{3}-\alpha \bigr) \frac{\lambda^{3}}{(\lambda
^{3}+\lambda
)}\leq4
\a^{-1} \bigl(\a^2-1 \bigr) (\lambda\a)^2\leq4
\a^{-1}\D(\si,\t).
\]
This gives the following bound on the expected distance after one flip:
\[
\bbE \bigl[\Delta \bigl(\si',\t' \bigr) \bigr] \leq
\Delta( \si,\t) \biggl(1-\frac{1}{|\L|} \biggr) 
+\frac{4\alpha^{-1}\Delta(\si,\t)}{|\midpoints|}.
\]
The desired bound follows by taking $\a\geq8$.
\end{pf}

\textit{Note}: In the above proof it suffices to take $\alpha=8$ and
$\lambda\leq\alpha^{-1}=1/8$.
Hence Theorem~\ref{gapknbound} holds with $\lambda_0=1/8$.

\subsection{Proof of Theorem~\texorpdfstring{\protect\ref{gapknbound}}{5.1}}\label{subsec:path}
Once the estimate in Lemma~\ref{lem:pathcoupling} is available the argument
is rather standard; see, for example,
\cite{LPW}, Theorem~14.6. Indeed, from the triangle inequality and the
definition
of the metric $\Delta(\cdot,\cdot)$, it follows that estimate~\eqref
{bound:d} can
be extended to \textit{any} pair of triangulations $\si,\t$ under a
suitable coupling.
This fact, together with the Markov property yields the bound
%
\begin{equation}
\label{bound:d1} \bbE \bigl[\Delta \bigl(\si^{(k)},\t^{(k)}
\bigr) \bigr] \leq\Delta(\si,\t) \biggl(1-\frac
{1}{2|\midpoints|} \biggr)^k
\leq \Delta(\si,\t) e^{-k/2|\L|},
\end{equation}
where $\bbE$ denotes expectation over the coupling of the random
triangulations $\si^{(k)},\t^{(k)}$ obtained after $k$ steps of the
Markov chain started at $\si,\t$, respectively.
The diameter bound in Lemma~\ref{lem:connected}, and the fact that the distance
between any two adjacent triangulations is at most $\alpha^{m+n}$, imply
that $\Delta(\si,\t)\le e^{C_0(m+n)}$ for some constant $C_0>0$.
Using also the fact that $\Delta(\si,\t)\geq c$ if $\si\neq\t$, for
some constant $c=c(\a)>0$,
an application of Markov's inequality yields
\[
\bigl\|p^k(\si,\cdot)-p^k(\t,\cdot)\bigr\|\leq
c^{-1}e^{C_0(m+n)- k/2|\L|},
\]
uniformly in the initial conditions $\si,\t$. Taking, for example,
$k=4|\L|C_0(m+n)$ completes the proof. 

\textit{Note}: Observe from the above proof that Theorem~\ref{gapknbound}
actually holds
with the bound $T_{\mathrm{ mix}}\leq C|\L| (m+n)$, where $|\L|$ is the
number of \textit{nonconstraint}
edges. This bound may be much better in cases where there are many constraints.

%
%
%


\subsection{The \texorpdfstring{$1\times n$}{$1 times n$} case for all \texorpdfstring{$\lambda<1$}{$lambda<1$}}
Here we prove that, in the special case of the 1-dimensional region
$\Lambda^0_{1,n}$,
the bound of Theorem~\ref{gapknbound} holds for \textit{all}
$\lambda<1$.

%
\begin{theorem}\label{gapknbound1d}
Set $m=1$. For any $\lambda<1$, there exists a constant $C>0$ such
that for
all $n\in\bbN$, and for all constraints $(\eta,\D)$,
%
\begin{equation}
\label{gapknbound11d} T_{\mathrm {mix}}\leq Cn^2.
\end{equation}
\end{theorem}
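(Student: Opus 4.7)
The plan is to reduce the $1\times n$ problem to a one-dimensional interface dynamics on lattice paths, where a sharper analysis is available. Concretely, I would first make explicit the correspondence between triangulations $\sigma\in\Omega(\eta,\Delta)$ of $\Loonen$ and $\pm 1$ lattice paths $h=(h_0,\dots,h_{2n})$ with $h_0=h_{2n}=0$, pinned at a subset of coordinates determined by the constraints. Reading the $2n$ triangles of $\sigma$ from left to right, set $h_j-h_{j-1}=+1$ when the $j$-th triangle has its apex (the unique vertex not lying on the row shared by the other two) on the top row and $-1$ otherwise; the equal number of integer points on the two rows forces $h_{2n}=0$, and each constraint edge translates into pinning $h_j$ at the corresponding index. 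A short calculation, using that the $\ell_1$ length of the interior edge shared by consecutive triangles equals the horizontal gap between their unshared vertices, yields
\[
|\sigma| \;=\; C(n,\eta,\Delta) \,+\, \sum_{j=0}^{2n}|h_j|
\]
with $C$ independent of $\sigma$, so the Gibbs measure pushes forward to the ``positive-field'' pinned SOS measure $h\mapsto Z^{-1}\lambda^{\sum_j|h_j|}$.

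Next, I would observe that a single edge flip in the Glauber dynamics corresponds under this bijection to a heat-bath update of one non-pinned coordinate $h_j$ (with $h_{j-1}=h_{j+1}$) from its conditional distribution given its two neighbors. The induced chain on paths is therefore the standard single-site heat-bath dynamics for the SOS measure above, and is manifestly monotone in the pointwise order on paths.

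To bound the mixing time, I would apply the monotone grand coupling between chains started from the extremal (top and bottom) paths and bound their coalescence time using the linear metric $\Delta(h,h')=\tfrac12\sum_j|h_j-h'_j|$. For any $\lambda<1$, the single-site update is biased toward the pinned ground-state profile, producing a uniform downward drift of $\sum_j|h_j|$ as long as $h$ is not yet at the minimum. Combined with the $O(n^2)$ bound on the initial $\ell_1$ distance and a standard Lyapunov/path-coupling argument in the spirit of the biased lattice path analysis of Greenberg--Pascoe--Randall~\cite{GPR}, this gives coalescence, and thus mixing, in $O(n^2)$ steps.

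The main obstacle is obtaining the required coupling estimate for the full range $\lambda<1$ rather than only for $\lambda$ sufficiently small as in Lemma~\ref{lem:pathcoupling}. The 2D argument relies on an exponential metric that requires $\alpha\lambda\leq 1$ with $\alpha$ fairly large, forcing $\lambda$ to be small; in 1D, by contrast, bump updates change the path at a single site by at most two, and for any $\lambda<1$ they produce a strict drift toward the ground-state profile, so the simpler linear $\ell_1$ metric suffices across the entire subcritical range.
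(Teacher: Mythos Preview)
Your bijection to lattice paths and the identification of the Gibbs weight with $\lambda^{\sum_j|h_j|}$ are correct and useful, and monotonicity of the heat-bath dynamics holds. The gap is in the final step: the linear $\ell_1$ metric does \emph{not} contract under path coupling for $\lambda<1$, and your drift heuristic points in the wrong direction.

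Take two triangulations $\sigma,\tau$ differing at a single midpoint~$x$, with horizontal lengths $|\sigma_x|=\ell+1$ and $|\tau_x|=\ell-1$; the two neighbouring edges have length~$\ell$ in both. In one dimension consecutive lengths differ by exactly~$1$, so at each neighbour $x\pm\tfrac12$ precisely one of $\sigma,\tau$ is flippable: if the outer neighbour has length $\ell+1$ then $\sigma$ can flip that edge up to $\ell+2$, while if it has length $\ell-1$ then $\tau$ can flip it down to $\ell-2$. In the ``both decrease'' scenario the heat bath flips each neighbour in~$\tau$ to length $\ell-2$ with probability $1/(1+\lambda^2)$, creating a new discrepancy of linear size~$2$. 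Thus the expected contribution from the two neighbours is $4/(1+\lambda^2)$, which exceeds the current distance $\Delta(\sigma,\tau)=2$ precisely when $\lambda<1$. So the linear metric expands rather than contracts in this regime; your intuition that ``bias toward the ground state'' helps is inverted here, because the dangerous moves are the high-probability shortening flips in~$\tau$ that create fresh discrepancies.

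The paper's proof stays with path coupling but uses the exponential metric $\Delta(\sigma,\tau)=\alpha^{\ell+1}(1-\alpha^{-2})$ (and $\alpha^2-1$ in the unit-diagonal case). The point is that a discrepancy produced by a shortening flip now carries weight $\alpha^{\ell}(1-\alpha^{-2})$, a factor $\alpha^{-1}$ smaller than the original, while a lengthening flip carries weight a factor~$\alpha$ larger. The resulting contraction condition becomes
\[
\max\Bigl\{\frac{2\alpha\lambda^2}{1+\lambda^2},\ \frac{2}{\alpha(1+\lambda^2)}\Bigr\}<1,
\]
and for any $\lambda<1$ one can pick $\alpha\in\bigl(2/(1+\lambda^2),\ (1+\lambda^2)/(2\lambda^2)\bigr)$, a nonempty interval since $(1-\lambda^2)^2>0$. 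With this~$\alpha$ one gets contraction by a factor $1-\delta/n$ per step, and the $e^{O(n)}$ diameter in the exponential metric yields $T_{\rm mix}=O(n^2)$ exactly as in Section~\ref{subsec:path}. If you want to salvage your route you would need to replace the linear distance by this $\lambda$-dependent exponential one; the Greenberg--Pascoe--Randall argument you cite likewise relies on an exponential metric, not a linear one.
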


To prove the theorem, we show that the path coupling argument of
Section~\ref{sec:pathcoupling} works for all $\lambda<1$ in this case.
For 1D triangulations we may define the length of an edge as its
horizontal length. Fix $\a>1$.
Let $\si,\t$ be two triangulations that differ at exactly one edge~$x$
with $|\si_x|\geq|\tau_x|$.
Then either the edges at $x$ have equal length, in which case they are
necessarily both unit diagonals,
or they have lengths $\ell-1$ (in~$\t$) and $\ell+1$ (in $\si$), for
some integer $\ell\geq1$.
In the first case we set $\D(\si,\tau)=\a^2-1$; in the second case we
set $\D(\si,\tau)=\a^{\ell+1}(1-\a^{-2})$.
As before, we extend~$\Delta$ to the shortest-path metric on all pairs
in~$\O$.
As detailed in Section~\ref{subsec:path},
estimate \eqref{gapknbound11d} will follow from the lemma below and
the fact that the maximal distance between two arbitrary configurations
is $e^{O(n)}$.

\begin{lemma}\label{1n}
For any $\lambda<1$ there exist constants $\a>1$, $\delta>0$ such
that the
following holds.
Let $\si,\t$ be two triangulations that differ at exactly one edge,
and let $\si',\t'$ denote the random triangulation obtained after one step.
There exists a coupling of $\si',\t'$ such that
%
\begin{equation}
\label{prodo1} \bbE \bigl[\Delta \bigl(\si',\t' \bigr)
\bigr]\leq(1-\delta/n)\Delta(\si,\t).
\end{equation}
\end{lemma}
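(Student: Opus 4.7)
The plan is to adapt the path coupling template of Lemma~\ref{lem:pathcoupling} to the 1D setting, where the geometry gives a crucial simplification that allows the argument to work for all $\lambda<1$. Fix $\sigma,\tau$ differing at a single midpoint $x$ with $|\sigma_x|\ge|\tau_x|$, and couple the two chains so that they draw the same $Y\in\Lambda$ and share the monotone coupling of the heat-bath decision at $Y$. If $Y=x$ the coupling forces $\sigma'_x=\tau'_x$, contributing $-\Delta(\sigma,\tau)/|\Lambda|$ to the expected change. If $Y$ is not an interior midpoint of the minimal parallelogram of $\sigma_x$, the two chains act identically at $Y$ and the distance is preserved. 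A key observation is that only two interior midpoints $y_1,y_2$ of this minimal parallelogram exist: the other two edges of the parallelogram lie on the boundary lines $\{i=0\}$ and $\{i=1\}$ and are frozen. This halving (two versus four neighboring midpoints) is what powers the improvement over Lemma~\ref{lem:pathcoupling}.

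For $Y=y_i$, the common edge $\sigma_{y_i}=\tau_{y_i}$ sits in different pairs of triangles in $\sigma$ and $\tau$, so its flippability may differ. I will split into three mutually exclusive sub-cases according to the edge on the far side of $y_i$: Case~I, in which $\sigma_{y_i}$ flips in $\sigma$ to a longer edge (from length $\ell$ to $\ell+2$) while $\tau_{y_i}$ is unflippable, giving expected contribution at most $\frac{\lambda^2\alpha}{1+\lambda^2}\Delta(\sigma,\tau)$; Case~II, in which $\tau_{y_i}$ flips in $\tau$ to a shorter edge (or to the opposite unit diagonal when $\sigma_{y_i}$ is itself a unit diagonal) while $\sigma_{y_i}$ is unflippable, giving at most $\frac{1}{(1+\lambda^2)\alpha}\Delta(\sigma,\tau)$; and Case~III, in which neither is flippable and the contribution vanishes. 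Choosing $\alpha=1/\lambda$ (so that $\lambda\alpha=1$) balances the two nontrivial bounds at $\lambda/(1+\lambda^2)$. Summing over $y_1,y_2$ and combining with the $Y=x$ decrease yields
\[
  \mathbb{E}[\Delta(\sigma',\tau')]\le \Delta(\sigma,\tau)\left(1-\frac{1}{|\Lambda|}\Bigl(1-\tfrac{2\lambda}{1+\lambda^2}\Bigr)\right)
  =\Delta(\sigma,\tau)\left(1-\frac{(1-\lambda)^2}{(1+\lambda^2)\,|\Lambda|}\right),
\]
and since $|\Lambda|=O(n)$ this will deliver the lemma with $\delta$ of order $(1-\lambda)^2/(1+\lambda^2)$.

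The hardest point I expect is the borderline case $\ell=1$, where $\sigma_x$ has horizontal length $2$ and $\tau_x$ is vertical, so both $y_1,y_2$ carry unit diagonals. In this regime, a Case~II flip at $y_i$ (when the edge on the far side of $y_i$ is also vertical) produces an opposite unit diagonal whose $\Delta_{y_i}$-weight equals $\alpha^2-1=\Delta(\sigma,\tau)$; the naive per-$y_i$ ratio is then $1/2$, so if Case~II holds simultaneously at $y_1$ and $y_2$ the positive contributions could cancel the $Y=x$ decrease. I will deal with this either by exploiting the extra rigidity that Case~II at both $y_1$ and $y_2$ imposes on the common triangulation (the simultaneous presence of vertical edges at $(\tfrac12,a)$ and $(\tfrac12,a+2)$ forces additional flippable structure that must be accounted for), or by a mild strengthening of $\Delta$ for the $\ell=1$ pair---for instance measuring edges by the $\ell_1$ metric rather than horizontal length, which promotes the $\ell=1$ weight by a factor of $\alpha$ and restores the per-$y_i$ ratio to $1/(2\alpha)$, consistent with the ratios achieved in all other cases.
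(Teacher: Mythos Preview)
Your plan coincides with the paper's proof: both use single-step path coupling with an exponential weight~$\Delta$, identify the two non-boundary neighbours $y_1,y_2=x\pm\tfrac12$ of the minimal parallelogram as the only midpoints where the coupled chains can diverge, and run exactly your Case~I/II/III analysis, arriving at the requirement $\max\bigl\{\tfrac{2\alpha\lambda^2}{1+\lambda^2},\,\tfrac{2}{\alpha(1+\lambda^2)}\bigr\}<1$; your balanced choice $\alpha=1/\lambda$ is the natural optimum and yields your contraction factor $1-\tfrac{(1-\lambda)^2}{(1+\lambda^2)|\Lambda|}$.

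On the $\ell=1$ borderline you are right to flag it---the paper's write-up lets the neighbour ``decrease to $\ell-2$'' without treating this case separately, so your observation is sharper than the published argument. Your computation is correct: with $\Delta$ based on \emph{horizontal} length, the pair in which $|\sigma_x|=2$, $\tau_x$ is vertical, and the edges at $x\pm1$ are both vertical has single-step contraction ratio exactly~$1$. Of your two proposed fixes, the first (exploiting rigidity of the surrounding configuration) cannot rescue \emph{single-step} path coupling: at $Y=y_i$ the $\sigma$-chain is deterministic (unflippable), so there is no coupling freedom, and at every other midpoint the two chains move identically; the per-pair expectation really equals $\Delta(\sigma,\tau)$. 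Your second fix---defining $\Delta$ via the $\ell_1$ length, as in Lemma~\ref{lem:pathcoupling}---works cleanly: it promotes $\Delta(\sigma,\tau)$ from $\alpha^2-1$ to $\alpha^3-\alpha$ in the $\ell=1$ case while the opposite-unit-diagonal weight at $y_i$ stays $\alpha^2-1$, so the per-$y_i$ Case~II ratio drops to $1/(2\alpha)$; all other cases (including the opposite-unit-diagonal base case, whose ratio becomes $\tfrac{2\lambda^2\alpha}{1+\lambda^2}$) are unchanged, and with $\alpha=1/\lambda$ every case is bounded by $\tfrac{2\lambda}{1+\lambda^2}<1$. Commit to this second fix and the argument is complete.
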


\begin{pf}
Say that the initial discrepancy is at midpoint $x$. In the coupling,
we pick the same random midpoint
$X$ to be updated, so that $\bbE[d(\si',\t')]$ is given by
\[
\bbE \bigl[\Delta \bigl(\si',\t' \bigr) \bigr]=(1-1/n)
\Delta(\si,\t)+\frac{1}n U(\si,\t),
\]
where $U(\si,\t)$ is the term coming from possible new discrepancies at
$x\pm1/2$.
To compute the latter, we
start with the case where $\si_x\neq\t_x$ are both unit diagonals, so
that $\Delta(\si,\t)=\a^2-1$.
In this case, the edges at $x\pm1/2$ are necessarily vertical in both
$\si,\t$, and we can create a discrepancy at either $x\pm1/2$ by
letting the edge there increase to length $2$. Only one of the two
triangulations can flip at either $x\pm1/2$. Thus, taking
expectations, in this case we have
\[
U(\si,\t)\le\frac{2\lambda^2}{1+\lambda^2} \bigl(\a^2-1 \bigr) =
\frac
{2\lambda^2}{1+\lambda^2} \Delta (\si,\t).
\]
Next, consider the remaining case where $|\si_x|=\ell+1$ and $|\t
_x|=\ell-1$. Necessarily the edges at $x\pm1/2$ have length $\ell$ in
both $\si,\t$.
Again, only one of the two triangulations can flip at either $x\pm
1/2$. In the worst case scenario, $x\pm1/2$ can either both increase
to $\ell+2$, or both decrease to $\ell-2$, or one can increase to
$\ell
+2$ and the other decrease to $\ell-2$. These produce respectively the terms
\begin{eqnarray*}
U(\si,\t)&=&\frac{2\lambda^2}{1+\lambda^2}\a^{\ell+2} \bigl(1-\a ^{-2} \bigr)
= \frac{2\a\lambda
^2}{1+\lambda^2} \Delta(\si,\t);
\\
U(\si,\t)&=&\frac{2}{1+\lambda^2}\a^{\ell-1} \bigl(1-\a^{-2} \bigr) =
\frac
{2}{\a(1+\lambda
^2)} \Delta(\si,\t);
\\
U(\si,\t)&=&
 \biggl(\frac{\a\lambda^2}{1+\lambda^2}+\frac{1}{\a(1+\lambda
^2)}
\biggr) \Delta(\si,\t).
\end{eqnarray*}
Thus it suffices to show that, for any $\lambda<1$, we can find $\a
>1$ and
$\delta>0$ such that
\[
\max \biggl\{ \frac{2\a\lambda^2}{1+\lambda^2},\frac{2}{\a
(1+\lambda^2)} \biggr\} \leq 1-\delta.
\]
This is easily verified.
\end{pf}

\textit{Note}: The bound in Theorem~\ref{gapknbound1d} should be compared
with the known bound $\tmix=\Theta(n^3\log n)$ in the case of 1D
triangulations with $\lambda=1$ (and no constraints). This follows
from a
one-to-one correspondence between 1D triangulations and lattice paths,
and the
results of Wilson~\cite{Wilson} on the mixing time of the flip dynamics
for such paths.

\section{Lower bounds on the mixing time for \texorpdfstring{$\lambda>1$}{$lambda>1$}}\label{sec:lowerB}
As a general rule, our lower bounds on the mixing time are obtained by
exploiting the
fact that it takes a long time to change the orientation of an
initially long edge.
Below, we say that an edge of a triangulation is \textit{positively
oriented} (resp.,
\textit{negatively oriented}) if its leftmost endpoint lies below
(resp., above) its
rightmost endpoint. A vertical or horizontal edge is neither positively
nor negatively oriented.


%

Our first result is an exponential lower bound on the mixing time (in
the absence of constraints)
that holds for any $\lambda>1$. This proves Theorem~\ref{thm:two} in the
\hyperref[sec:intro]{Introduction}.

%
\begin{theorem}\label{thm:alla1}
For every $\lambda>1$, there exists $c>0$ such that the mixing time
of the
Glauber dynamics
on triangulations of $\Lambda^0_{m,n}$ in the absence of constraints
satisfies $\tmix\geq\exp{(c (m+n))}$ for all $m,n\geq1$.
\end{theorem}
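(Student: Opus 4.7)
The plan is a canonical conductance (bottleneck) argument: exhibit a long edge $e^*$ and set $A=\{\sigma\in\Omega:\,e^*\in\sigma\}$, then verify that (i) $0<\mu(A)\leq 1/2$ via a reflection symmetry, and (ii) the flow $F(A,A^c)=\sum_{\sigma\in A,\tau\in A^c}\mu(\sigma)P(\sigma,\tau)$ is exponentially small compared with $\mu(A)$. The standard Cheeger-type lower bound $\tmix\geq c_0\mu(A)/F(A,A^c)$ then yields the theorem.

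The edge $e^*$ must be chosen so that its minimal parallelogram has a very short ``other diagonal'' and so that $e^*$ is uniquely flippable. Assume without loss of generality that $m\geq n\geq 2$ and that $m$ is odd (even or small parities require only minor adjustments of the choice of $e^*$). Take $e^*$ to be the edge from $(0,0)$ to $(m,2)$: since $\gcd(m,2)=1$ this is a valid triangulation edge of $\ell_1$-length $m+2$, with midpoint $x^*=(m/2,1)$. By Proposition~\ref{prop:closest}, the integer points closest to the line through $e^*$ on either side are $p_3=((m-1)/2,1)$ and $p_4=((m+1)/2,1)$, both at horizontal distance $1/m$; the other integer points on the lines through $p_3,p_4$ parallel to $e^*$ have $i$-coordinate outside $[0,m]$ (for $m\geq 3$), hence fall outside $\Lomn$. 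Therefore in every $\sigma\in A$ the two triangles containing $e^*$ form the minimal parallelogram $p_1p_3p_2p_4$, so $e^*$ is flippable and its unique flip replaces it by the horizontal unit edge $p_3p_4$ of length $1$.

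To bound the flow, note that since $A$ depends only on $\sigma_{x^*}$, any $A\to A^c$ transition must flip the edge at $x^*$, and the heat-bath probability of that flip is $\lambda^{|p_3p_4|}/(\lambda^{|p_3p_4|}+\lambda^{|e^*|})\leq \lambda^{-(m+1)}$, giving $F(A,A^c)\leq\mu(A)\lambda^{-(m+1)}/|\L|$. For the bound $\mu(A)\leq 1/2$, use the vertical reflection $r:(i,j)\mapsto(m-i,j)$, which is an $\ell_1$-isometry of $\Lomn$ and hence preserves $\mu$: the reflected edge $r(e^*)=(m,0)-(0,2)$ crosses $e^*$ at the interior point $(m/2,1)$, so $A$ and $r(A)$ are disjoint and of equal measure. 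Since $\mu(A)>0$ is obvious (any partial triangulation extends to a full one), the conductance bound yields $\tmix\geq c_0|\L|\lambda^{m+1}\geq e^{cm}$ with $c=\log\lambda>0$. Applying the same argument to the edge $(0,0)-(2,n)$ (using the horizontal reflection, and replacing $n$ by $n-1$ if $n$ is even) gives $\tmix\geq e^{cn}$, and combining yields $\tmix\geq e^{c(m+n)/2}$.

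The main points that will require care are the uniqueness of flippability of $e^*$ in $\sigma\in A$---which hinges on the observation that $\Lomn$ is not large enough in the $i$-direction to accommodate further integer points on the lines through $p_3,p_4$---and the verification that reflecting $e^*$ produces a crossing partner. The explicit choice $(0,0)-(m,2)$ handles both simultaneously. The degenerate one-dimensional cases ($m=1$ or $n=1$) will require a separate argument, since in that regime a single flip cannot shorten any long edge by more than $O(1)$, and the present conductance argument does not apply directly.
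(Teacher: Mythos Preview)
Your approach is genuinely different from the paper's and is correct when $\min(m,n)\geq 2$. The paper builds a ``herringbone'' set $A$ (layered one-dimensional triangulations with alternating orientations), and bounds $Z(\partial A)/Z(A)$ via the lattice-path correspondence and the partition estimate $p(k)=e^{O(\sqrt k)}$. You instead isolate a single long edge $e^*=(0,0)$--$(m,2)$ whose adjacent triangles are \emph{forced} to form its minimal parallelogram, because the two closest parallel lines to $e^*$ each meet $\Lambda^0_{m,n}$ in exactly one integer point. This makes every $A\to A^c$ transition a single length-decreasing flip with heat-bath probability $\leq\lambda^{-(m+1)}$, and the reflection $r$ gives $\mu(A)\leq 1/2$ cleanly since $e^*$ and $r(e^*)$ share the midpoint~$x^*$. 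Your route is more elementary---no area counting, no partition asymptotics---and the parity adjustments really are minor (for even $m\geq 6$ the edge $(0,0)$--$(m-1,2)$ works by the same computation; the handful of remaining small grids are absorbed into~$c$). Incidentally, once you assume $m\geq n$, the bound $\tmix\geq e^{cm}$ already gives $\tmix\geq e^{c(m+n)/2}$, so the second run with $(0,0)$--$(2,n)$ is redundant.

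The one real gap, which you correctly flag, is $\min(m,n)=1$. In a $1\times n$ strip every flip changes $|\sigma_x|$ by exactly~$2$, so no single-edge bottleneck can be exponentially tight; a collective argument is unavoidable. The paper's proof covers this case directly---in fact its one-dimensional lattice-path computation is the engine of the whole herringbone bound. To complete your proof you can simply invoke that computation separately: with $A=\{\sigma:\text{area under the path}\geq n^2-n\}$ one gets $Z(\partial A)/Z(A)\leq\sum_{k\geq n}p(k)\lambda^{-k}=e^{-\Omega(n)}$.
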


\begin{pf}
For any $A\subseteq\O$ we write
\[
Z(A)=\sum_{\si\in A}\lambda^{|\si|}
\]
for the partition function restricted to $A$. If $\partial A$ denotes
the set of $\s\in A$ that are adjacent to $A^c$ in the flip graph, then
the standard conductance bound (see, e.g., \cite{LPW}, Theorem~7.3)
will allow us to deduce the theorem once we find a set $A$ such that
$\mu(A)\leq1/2$ and
%
\begin{equation}
\label{aset} \frac{Z(\partial A)}{Z(A)}\leq e^{-c (m+n)}.
\end{equation}
Assume w.l.o.g. that the horizontal coordinate $n\geq m$ (else the
same argument applies
with the roles of $n$ and~$m$ reversed). We define $A\subseteq\O$ as
the set of
triangulations $\si\in\O$ such that every internal midpoint with
half-integer\vspace*{1pt} vertical coordinate $v$
is not positively (resp., not negatively) oriented if $v+\frac{1}2$ is
odd (resp., even). Notice that any $\si\in A$ has the ``herringbone''
structure illustrated in Figure~\ref{fig:herr}. In particular, any
triangulation
in $A$ consists of 1D triangulations in each horizontal layer. Moreover,
if $\si\in A$, then all edges with integer vertical coordinate are
horizontal and frozen (unflippable).

Now, the total length of any $\si\in A$ can be written as
\[
|\si| = (m+1)n + (2n+1)m + \sum_{j=1}^m
L \bigl(\si(j) \bigr),
\]
where $(m+1)n$ is the total length of all horizontal (unit length)
edges, $(2n+1)m$ is the sum of all vertical lengths of the edges of
$\si
$ and $L(\si(j))$ denotes the total horizontal length of the internal
edges of the 1D triangulation $\si(j)$ in the $j$th layer.

\begin{figure}

\includegraphics{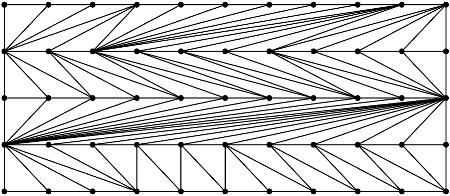}

\caption{Example of a triangulation of a $4\times10$ region, with the
``herringbone'' structure
of the set $A$ introduced in the proof of Theorem~\protect\ref{thm:alla1}.}
\label{fig:herr}
\end{figure}

Below, we let $\O^+_{1,n}$ denote the set of all 1D triangulations such
that all internal edges are not negatively oriented.
From the well-known map between lattice paths and 1D triangulations
(see, e.g.,~\cite{KZ}), one has that
$\O^+_{1,n}$ is in one-to-one correspondence with lattice paths
(integer valued sequences with $\pm1$ increments) of length $2n$,
which start at zero, end at zero and stay nonnegative. Moreover, under
this correspondence, the total length $L(\pi)$ of $\pi\in\O
^+_{1,n}$ is
precisely the area under the path $\pi$. The maximal element in $\O
^+_{1,n}$, denoted $\hat\pi$, has total length
\[
L(\hat\pi)=\sum_{i=1}^ni+ \sum
_{i=1}^{n-1}i = n^2.
\]
Clearly, $\si\in\partial A$ implies that there exists a layer $j$ such
that $\si(j)$ has an internal vertical edge, that is, an edge with
horizontal length zero. In particular, on that layer we must have
$L(\si
(j))\leq n^2 - n$.
Therefore, 
writing $Z(A),Z(\partial A)$ as products over layers, and summing over
the $m$ choices of the distinguished layer $j$ with $L(\si(j))\leq n^2
- n$, one finds
%
\begin{eqnarray*}
\label{aset1} \frac{Z(\partial A)}{Z(A)}&\leq& m\frac{\sum_{\pi\in\O^+_{1,n}\dvtx L(\pi)\leq n^2 - n}\lambda
^{L(\pi)}}{\sum_{\pi
'\in\O^+_{1,n}}\lambda^{L(\pi')}} 
\\
&\leq& m\sum_{\pi\in\O^+_{1,n}\dvtx L(\pi)\leq n^2 - n}\lambda ^{L(\pi
)-n^2}\leq m\sum
_{k\geq n}p_n(k)\lambda^{-k},
\end{eqnarray*}
where we have estimated $\sum_{\pi'\in\O^+_{1,n}}\lambda^{L(\pi
')}\geq\lambda
^{L(\hat\pi)}= \lambda^{n^2}$, and we use $p_n(k)$ to denote the
number of
$\pi\in\O^+_{1,n}$ which have $L(\pi)= n^2 - k$. From the
correspondence with lattice paths it is seen that $p_n(k)$ is bounded
by the number $p(k)$ of all partitions of the integer $k$. 
To see this, observe that: (i)~if $\pi\in\O^+_{1,n}$ with $L(\pi)= n^2
- k$, then the region with area $L(\hat\pi)- L(\pi)$ obtained by
``subtracting'' $\pi$ from $\hat\pi$ is a Young diagram with $k$
boxes; and
(ii)~any Young diagram with $k$ boxes can be seen uniquely as a region
obtained by subtraction as above for some (not necessarily
nonnegative) lattice path $\pi$ with area $n^2-k$. Since $p(k)$ is the
number of Young diagrams with $k$ boxes, restricting to nonnegative
lattice paths $\pi$ one
has $p(k)\geq p_n(k)$.
Finally, using the well-known fact that $p(k)=\exp({O(\sqrt k)})$, we
arrive at the desired bound \eqref{aset}.
\end{pf}

The lower bound in Theorem~\ref{thm:alla1} can be improved as follows
in the case $m\ll\sqrt n$.

%
\begin{theorem}\label{lbgap}
For every $\lambda>1$, there exists $c>0$ such that the mixing time
of the
Glauber dynamics
on triangulations of $\Lambda^0_{m,n}$, in the absence of constraints,
satisfies $\tmix\geq\exp{(c n^2/m)}$ for all $n\geq m^2/c$, $m\geq1$.
\end{theorem}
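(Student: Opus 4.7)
To prove Theorem~\ref{lbgap}, I would strengthen the conductance-based bottleneck argument used in the proof of Theorem~\ref{thm:alla1}. The basic strategy is again to find a set $A\subseteq\O$ with $\mu(A)\leq 1/2$ such that the bottleneck ratio $Z(\partial A)/Z(A)$ is small; the goal is now to achieve $Z(\partial A)/Z(A)\leq \exp(-c\,n^2/m)$, an exponent in $n^2/m$ rather than $m+n$.

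\textbf{Choice of $A$.} As in the proof of Theorem~\ref{thm:alla1}, I would restrict attention to herringbone triangulations, where the layers $\si(1),\ldots,\si(m)$ are (essentially) independent 1D triangulations in $\O^+_{1,n}$, in bijection with non-negative lattice paths of length $2n$. On top of the herringbone parity constraint, I would impose a quantitative total-length constraint: $A$ consists of herringbones with
\[
\sum_{j=1}^m L(\si(j)) \;\geq\; m n^2 - c\,\frac{n^2}{m},
\]
for a suitable small constant $c>0$. The orientation-reversing involution that flips every internal edge in every layer maps $A$ bijectively onto a disjoint set $A^*$ of equal $\mu$-measure, so $\mu(A)\leq 1/2$ automatically, and the symmetry between $A$ and $A^*$ plays the role of separating ``two phases'' that the dynamics must transit between.

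\textbf{Bounding $Z(\partial A)/Z(A)$.} A single flip can change the total length of a triangulation by at most $O(m+n)=O(n)$, and by the observation in the proof of Theorem~\ref{thm:alla1}, it cannot alter the frozen integer-row structure of a herringbone. Therefore every $\si\in\partial A$ is a herringbone whose total layer-length lies in the thin shell $\bigl[mn^2 - c\,n^2/m,\;mn^2 - c\,n^2/m + O(n)\bigr)$. Summing $\l^{|\si|}$ over such configurations and comparing to the maximum-length herringbone in $A$ (of weight $\l^{mn^2}$ up to lower-order constants), one obtains
\[
\frac{Z(\partial A)}{Z(A)} \;\leq\; N\cdot\l^{-c\,n^2/m + O(n)},
\]
where $N$ counts herringbones with total layer-length deficit in the stated shell. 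By the same lattice-path correspondence used in the proof of Theorem~\ref{thm:alla1}, the number of 1D triangulations with per-layer deficit $k_j$ is at most $p(k_j)\leq e^{O(\sqrt{k_j})}$, so that after a Cauchy--Schwarz-type estimate on the sum $\sum_j k_j \approx c\,n^2/m$ one obtains $N\leq \exp(O(n\sqrt{c}+m\log n))$. Under the hypothesis $n\geq m^2/c$, the entropic factor $N$ is dominated by the energetic penalty $\l^{-c\,n^2/m}$, and one concludes
\[
\frac{Z(\partial A)}{Z(A)} \;\leq\; \exp\!\Bigl(\!-c'\frac{n^2}{m}\Bigr).
\]
The standard conductance bound (as applied in the proof of Theorem~\ref{thm:alla1}) then yields $\tmix\geq \exp(c''\,n^2/m)$, proving the theorem.

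\textbf{Main obstacle.} The crucial step is the combinatorial estimate on $N$: one has to control a convolution of partition-number counts across $m$ essentially independent layers, showing that the entropy associated with distributing a total deficit of $cn^2/m$ among $m$ layers is much smaller than the resulting Gibbs penalty. The assumption $n\geq m^2/c$ enters exactly here, as it is the regime in which $n^2/m$ dominates both $n$ (from the single-flip displacement) and $m\log n$ (from the combinatorial distribution of deficits). A secondary subtlety is verifying that no non-herringbone triangulation lies one flip away from $A$, so that the herringbone restriction implicit in the definition of $A$ is consistent with the flip-graph neighborhood; this follows from the frozenness of integer-row horizontal edges inside herringbones, already used in the proof of Theorem~\ref{thm:alla1}.
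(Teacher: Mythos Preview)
Your argument contains a genuine gap at the ``secondary subtlety'' you flag at the end. You assert that no non-herringbone lies one flip away from~$A$, appealing to the frozenness of the integer-row horizontal edges. That frozenness does guarantee that a flip preserves the \emph{layered} structure (each layer remains a 1D triangulation), but it does \emph{not} preserve the orientation-parity constraint that defines a herringbone. A herringbone $\sigma\in A$ in which some layer has an internal vertical edge (horizontal length zero) is one flip away from a triangulation violating parity in that layer---exactly the mechanism used in the proof of Theorem~\ref{thm:alla1}. Such a $\sigma$ therefore belongs to $\partial A$, yet if the vertical edge sits at horizontal position $i=1$ (or $i=n-1$), the layer's deficit is only $2(n-1)$, so $\sigma$ may have total length as large as $mn^2-O(n)$, well above your threshold $mn^2 - cn^2/m$ whenever $n/m$ is large. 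The contribution of these boundary configurations to $Z(\partial A)/Z(A)$ is then only $e^{-O(n)}$, not $e^{-\Omega(n^2/m)}$, and your conductance bound collapses to that of Theorem~\ref{thm:alla1}.

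The paper's proof circumvents precisely this obstacle by imposing the orientation-parity constraint only in a \emph{central} horizontal window $[\epsilon n,(1-\epsilon)n]$, leaving the strips of width~$\epsilon n$ near the left and right boundaries unconstrained. Then any $\sigma\in\partial A$ must have its vertical edge at some position $i\in[\epsilon n,(1-\epsilon)n]$, which forces a layer deficit of at least $2i(n-i)\geq 2\epsilon(1-\epsilon)n^2$. Taking $\epsilon=c/m$ yields the desired $\Omega(n^2/m)$ exponent; the hypothesis $n\geq m^2/c$ is used to absorb the contribution of the unconstrained boundary strips (total length $O((\epsilon n)^2 m + \epsilon n m^2)$) and the crude $2^{O(mn)}$ count of all triangulations. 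Your length-threshold idea and partition-counting are not needed once the geometry of the constraint region has been chosen correctly.
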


\begin{pf}
Take $\e\in(0,1)$ and divide the midpoints in $\L:=\Lmn$ into three
regions as follows:
$\L_\ell$ is the set of $x\in\L$ with horizontal coordinate between $0$
and $\e n$,
$\L_c$ is the set of $x\in\L$ with horizontal coordinate between $\e n$
and $(1-\e) n$,
and $\L_r$ is the remainder of~$\L$.
Consider the set $\L_1\subseteq\L_c$ of all internal midpoints $x\in
\L_c$
whose vertical coordinate is half-integer.
Let $A\subseteq\O$ denote the set of triangulations~$\sigma$ such
that every
edge~$\sigma_x$ with $x\in\midpoints_1$ with vertical coordinate~$v$
is positively or negatively oriented according to the parity of
$v+\frac{1}2$.
In particular, adjacent layers are required to have
one-dimensional edges of opposite orientation throughout the whole
central region.
This is a relaxed version of the set $A$ appearing in the proof of
Theorem~\ref{thm:alla1}
(see Figure~\ref{fig:herr}), where the herringbone structure is now
imposed only in the
central region~$\L_c$.

We proceed by estimating $Z(\partial A)/Z(A)$ as in the proof of
Theorem~\ref{thm:alla1}.
Clearly, we can bound $Z(A)$ from below by
considering only the maximal one-dimensional triangulations $\hat\si
(j)$ in each layer $j$ throughout the whole region~$\L$, and taking
every edge with integer vertical coordinate to be horizontal
(unit length), giving
\[
Z(A)\geq\lambda^{m n^2 + O(mn)}.
\]
Now let $\si\in\partial A$.
The length of an edge whose midpoint is in $\L_\ell\cup\L_r$ is at
most $\e n + m$.
Since there are at most $8\e n m$ such midpoints, the total
contribution of these edges
to $|\si|$ is at most $8(\e n)^2 m + 8\e n m^2$.
Moreover, there must be a midpoint $x_0\in\midpoints_1$ whose
edge~$\sigma_{x_0}$
is vertical (unit length).
The contribution of the layer containing $x_0$ to the total length of
$\si$ is at most
$(un)^2 + ((1-u)n)^2$, where $\e\leq u\leq1-\e$. All other layers
contribute total length
at most $(m-1)n^2$. Hence for any $\si\in\partial A$, we have
\[
|\si|\leq8(\e n)^2 m + 8\e n m^2 + (m-1)n^2
+ (u_*n)^2 + \bigl((1-u_*)n \bigr)^2, 
\]
where $u_*\in[\e,(1-\e)]$ maximizes $(un)^2 + ((1-u)n)^2$. Clearly,
we have
$u_*\sim\e$ [or by symmetry $u_*\sim(1-\e)$].
Since there are at most $2^{3mn}$ triangulations in total~\cite
{Anclin}, we get
%
\[
\frac{Z(\partial A)}{Z(A)}\leq\lambda^{8(\e n)^2 m + 8\e n m^2 -n^2
+ (\e
n)^2 + ((1-\e)n)^2 + O(mn)}.
\]
%
Thus taking $\e= c /m$ and $n\geq m^2/c$ for some sufficiently small
constant $c>0$
(independent of $n,m$)
gives
\[
\frac{Z(\partial A)}{Z(A)}\leq e^{-c' n^2/m}.
\]
This completes the proof.
\end{pf}

\textit{Note}: While the above bound is tight in the case $m=O(1)$, it
becomes progressively
weaker as $m$ increases with~$n$, and vacuous when $m\sim\sqrt{n}$.
In particular, it is far from the conjectured
behavior $\tmix=e^{\O(mn(m+n))}$ stated in Conjecture~\ref{conj:mt}.
The deterioration
of the bound as $m$ increases is apparently an artifact of our proof,
which is based on
an essentially one-dimensional ``herringbone'' structure similar to that
used in the proof of
Theorem~\ref{thm:alla1}.

We end this section with a much weaker lower bound that holds for all
$\lambda>0$. This shows
in particular that the upper bound of Theorem~\ref{gapknbound} for the
small~$\lambda$ regime is tight.

\begin{proposition}\label{prop:alla}
There exists $c>0$ such that the mixing time of the Glauber dynamics on
triangulations
of $\Lambda^0_{m,n}$ in the absence of constraints satisfies $\tmix
\geq c mn(m+n)$
for all $m,n\geq1$ and all $\lambda>0$.
\end{proposition}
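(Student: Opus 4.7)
My plan is a hitting-time lower bound for the Glauber dynamics focused on a single carefully chosen midpoint $x^*$. Assume without loss of generality $n\ge m$, and take $x^*=(\tfrac12,\tfrac{n}{2})$, an interior midpoint of $\Lomn$ lying on the horizontal axis of symmetry $\{y=n/2\}$. Let $v^*\in\Omega_{x^*}$ be the edge with endpoints $(0,0)$ and $(1,n)$, of positive slope and $\ell_1$-length $n+1$, and let $\sigma_0\in\Omega$ be any triangulation containing~$v^*$; such a $\sigma_0$ is easily constructed by triangulating the two lattice polygons into which $v^*$ cuts~$\Lomn$. Set $D:=\kappa(v^*,\bar\sigma_{x^*})$, the tree depth of $v^*$ in the tree~$\Omega_{x^*}$ introduced in Section~\ref{subsec:flips}.

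The first step is to establish that $D\ge (m+n)/4$. Using the determinant criterion for the flip neighbours of an edge at $x^*$ (so that $\sigma_x,\sigma'_x$ are neighbours iff the corresponding parallelogram is unimodular), I would verify that the unique shortening flip of the slope-$(1,k)$ edge at~$x^*$ produces the slope-$(1,k-2)$ edge, whenever $k$ has the appropriate parity. Iterating, the shortening tree-path from~$v^*$ has length exactly $\lfloor n/2\rfloor$, giving $D\ge (m+n)/4$ since $n\ge m$. This is the delicate place where the geometry of~$x^*$ and~$v^*$ matters: for a generic coprime slope $(a,b)$ the tree depth of a length-$L$ edge can be as small as $\Theta(\log L)$ (as occurs for Fibonacci-like slopes), so it is essential to work with the near-axis-aligned slope $(1,k)$, whose tree depth is $\Theta(k)$.

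The second step is to exhibit a $\mu$-heavy target set, uniformly in~$\lambda$. Partition $\Omega_{x^*}=T^+\sqcup T^-\sqcup R$, where $T^\pm$ are the edges at~$x^*$ of strictly positive/negative slope and $R\subseteq\{\bar\sigma_{x^*}\}$ is the remaining ground-state configuration (a horizontal unit edge in the Type~1 case when $n$ is even, empty in the Type~2 case when $n$ is odd). The reflection $(x,y)\mapsto(x,n-y)$ is a symmetry of~$\Lomn$, it fixes~$x^*$, and it swaps $T^+$ with~$T^-$; since it preserves $\lambda^{|\sigma|}$, we obtain $\mu(T^+)=\mu(T^-)$. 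Hence the set $A:=\{\sigma:\sigma_{x^*}\notin T^+\}$ satisfies $\mu(A)\ge 1/2$, \emph{uniformly in $\lambda>0$}. Moreover, by the tree structure described in Section~\ref{subsec:flips}, any path in $\Omega_{x^*}$ from $T^+$ to $T^-\cup R$ must pass through the root(s), so $\kappa(v^*,\sigma_{x^*})\ge D$ for every $\sigma\in A$.

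The final step is a concentration argument. The configuration $\sigma_{x^*}$ can change only when $x^*$ is selected for update (probability $1/|\L|$ per step), and each such flip moves $\sigma_{x^*}$ by at most one edge in the tree. Hence, starting from~$\sigma_0$, the tree distance from~$v^*$ to $\sigma_{t,x^*}$ is stochastically dominated by $\mathrm{Binomial}(t,1/|\L|)$. Taking $t\le D|\L|/8$ and applying Markov's inequality yields $\Pr[\sigma_t\in A]\le 1/8$, whence $\|p^t(\sigma_0,\cdot)-\mu\|\ge \mu(A)-1/8\ge 3/8>1/4$, so that $\tmix>D|\L|/8$, which is $\Omega(mn(m+n))$ since $|\L|=\Theta(mn)$. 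The main obstacle is the linear tree-depth claim of Step~1: it requires the right combination of midpoint (on a symmetry axis) and slope (nearly axis-aligned), and it is exactly this linear-in-$(m+n)$ behaviour that allows the lower bound to match the upper bound of Theorem~\ref{gapknbound}, whose $(m+n)$ factor enters through the exponential metric in the path-coupling analysis.
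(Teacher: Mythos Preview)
Your proposal is correct and follows essentially the same architecture as the paper's proof: the same midpoint $x^*=(1/2,n/2)$, the same initial edge from $(0,0)$ to $(1,n)$, the same reflection symmetry to get $\mu(A)\ge 1/2$ uniformly in~$\lambda$, and the same hitting-time bound via Markov's inequality.

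The one genuine difference is in how you count the required flips. The paper counts flips in the entire strip $\Lambda'=\{(1/2,i):i=0,\tfrac12,1,\ldots,n\}$ and invokes the lattice-path correspondence to argue that $\Omega(n^2)$ such flips are needed; since a step lands in~$\Lambda'$ with probability $O(1/m)$, this yields $\tmix=\Omega(mn^2)$. You instead count flips at the single midpoint~$x^*$ and use the tree structure of~$\Omega_{x^*}$ from Section~\ref{subsec:flips}: the shortening path from slope $(1,n)$ to ground state has length $\lfloor n/2\rfloor$, and since~$x^*$ is selected with probability $\Theta(1/(mn))$ you recover the same $\Omega(mn^2)$ bound. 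Your route is slightly more elementary (it bypasses the lattice-path area argument entirely), and your explicit remark that a generic slope would give only logarithmic tree depth is a nice explanation of why the near-axis-aligned slope is essential.
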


\begin{pf}
Assume that $n\geq m$. Let $x = (1/2, n/2)\in\Lambda$, and let the
initial condition $\si$
be an arbitrary triangulation in which $\si_x$ is the edge from $(0,0)$
to $(1,n)$.
Let $A$ be the set of triangulations in which $\sigma_x$ is \textit{not}
positively oriented.
By symmetry, the stationary probability of $A$ is at least $1/2$ (with
strict inequality
if $x$ is of type 1). Then, the total variation distance at time~$t$
can be bounded
below as
\[
\bigl\|p^t(\si,\cdot)-\mu\bigr\|\geq\tfrac{1}2 - p^t(\si,A)
\geq\tfrac{1}2 - \bbP_\si (\t_A\leq t),
\]
where $\t_A$ denotes the hitting time of the set $A$, and $\bbP_\si$ is
the probability
on trajectories of the Markov chain with initial condition~$\si$.
Let $\Lambda' \subseteq\Lambda$ be the set of midpoints $(1/2,i)$ for
$i=0,1/2,1,\ldots,n$.
In order to flip $\si_x$ all the way to an edge that is not positively
oriented, we need
to perform at least $\O(n^2)$ flips of the edges with midpoints
in~$\Lambda'$, as can
be seen easily from the correspondence with lattice paths recalled in the
proof of Theorem~\ref{thm:alla1}.
Thus $\bbP_\si(\t_A\leq t)$ is bounded above by the probability that
out of $t$ updates in $\L$, at least $\O(n^2)$ of them are in $\L'$.
Each update independently falls in $\L'$ with probability $\frac{|\L
'|}{|\L|}=O(1/m)$.
By Markov's inequality it follows that, for some constant $C>0$,
\[
\bbP_\si(\t_A\leq t) \leq C \frac{t}{mn^{2}}.
\]
Taking $t=mn^{2}/4C$ yields $\tmix=\Omega(mn(m+n))$.
\end{pf}

\section{Future work}\label{sec:future}
Our results suggest a number of immediate open questions.
\begin{longlist}[(1)]
\item[(1)] Can the spatial mixing property (Theorem~\ref{thm:zero}) and
the polynomial
mixing time bound (Theorem~\ref{thm:one}) be extended to the entire
regime $\lambda<1$?
This would in particular complete the verification of part~(a) of
Conjecture~\ref{conj:mt}.
These questions, which are related, are likely to require a deeper
investigation of the
geometry of triangulations along the lines begun in Section~\ref
{sec:equilibrium}.
\item[(2)] Can the stronger exponential lower bound on mixing time in
part~(b) of Conjecture~\ref{conj:mt}
be proved in the regime $\lambda>1$? This will require the use of more
sophisticated
rigid triangulations than the ``herringbone'' structures used in
Section~\ref{sec:lowerB}.
\item[(3)] The unweighted case $\lambda=1$ seems particularly
challenging, as our results
strongly suggest that it corresponds to a ``critical point.''
Is the mixing time polynomial in this case?
\item[(4)] In the ``super-critical'' regime $\lambda>1$, the model
appears to exhibit various
``phases'' according to the direction of alignment of the edges. It
would be interesting
to describe these phases and their contributions to the equilibrium
distribution.
\end{longlist}

%



\printaddresses
\end{document}